\newcommand{\gp}{$G$-primitive}
\newcommand{\gps}{$G$-primitive }
\newcommand{\gpsperiod}{$G$-primitive.}
\newcommand{\gpscomma}{$G$-primitive,}
\newcommand{\cok}{\textnormal{cok}}
\newcommand{\blue}[1]{{\textcolor{blue}
{#1}}}
\newcommand{\red}[1]{{\textcolor{red}
    {#1}}}
\newcommand{\Conj}{\mathbb Z\textnormal{Conj}}
\newcommand{\SL}{\textnormal{SL}}
\newcommand{\EL}{\textnormal{El}}
\newcommand{\El}{\textnormal{El}}
\newcommand{\Z}{\mathbb{Z}}
\newcommand{\nkone}{\textnormal{NK}_1}
\newcommand{\ZG}{\mathbb Z G}
\newcommand{\N}{\mathbb{N}}
\newcommand{\R}{\mathbb{R}}
\newcommand{\rr}{\mathcal{R}}
\newcommand{\sss}{\mathcal{S}}
\newcommand{\nzc}{\textnormal{NZC}}
\newcommand{\nzcgpp}{\textnormal{NZC}(\Z_+G[t])}
\newcommand{\trace}{\textnormal{tr}}
\newcommand{\Rcal}{\mathcal{R}}
\newcommand{\tr}{\textnormal{tr}}
\newcommand{\gl}{\textnormal{GL}}
\newcommand{\el}{\textnormal{El}}
\newcommand{\As}{A^{\Box}}
\newcommand{\Bs}{B^{\Box}}
\newcommand{\mat}[4]{\left(\begin{array}{cc} #1 & #2 \\ #3 &
      #4 \end{array}\right)}
\DeclareMathOperator{\GL}{GL}
\DeclareMathOperator{\SK}{SK}
\DeclareMathOperator{\K}{K}
\newcommand{\annotation}[1]{\marginpar{\tiny #1}}
\theoremstyle{plain} 
\newtheorem{theorem}[equation]{Theorem}
\newtheorem{lemma}[equation]{Lemma}
\newtheorem{proposition}[equation]{Proposition}
\newtheorem{corollary}[equation]{Corollary}
\newtheorem{prop}[equation]{Proposition}
\theoremstyle{remark}
\newtheorem{question}[equation]{Question}
\newtheorem{pquestion}[equation]{Parry's Question}
\newtheorem{notation}[equation]{Notational Convention}
\newtheorem{remark}[equation]{Remark}
\newtheorem{definition}[equation]{Definition}
\newtheorem{erratum}[equation]{Erratum}
\newtheorem{example}[equation]{Example}
\newtheorem{realizationproblems}[equation]{Realization Problems}
\newtheorem{algebraicstudy}[equation]{Algebraic Study}
\newtheorem{sufficiencyofinvariants}[equation]{Sufficiency of invariants}
\newenvironment{enumeratei}{\begin{enumerate}[\upshape (i)]}%
                            {\end{enumerate}}
\numberwithin{equation}{section}
\begin{document}
\keywords{strong shift equivalence; algebraic K-theory;
shift of finite type; G shift of finite   type; group extension}
\subjclass[2010]{Primary 37B10; Secondary  19M05}

\title[K-theory, Parry and  Liv\v sic]{Finite group extensions of shifts of finite type:
  K-theory, Parry and  Liv\v sic\\}

\author{Mike Boyle and Scott Schmieding}
\maketitle
\begin{abstract}
This paper extends and applies algebraic invariants
and constructions
for mixing finite group extensions of shifts of finite type.
For a finite abelian group $G$,
Parry showed how to define a $G$-extension $S_A$ from a
square matrix over $\Z_+G$, and classified the extensions
up to topological conjugacy by the strong shift equivalence
class of $A$ over $\Z_+G$.
Parry asked in this case if the dynamical zeta function $\det
(I-tA)^{-1}$
(which captures the ``periodic data'' of the extension) would
classify up to finitely many topological conjugacy classes
the extensions
by $G$ of a fixed mixing shift of finite
type.
When the
algebraic $\K$-theory   group $\textnormal{NK}_1(\ZG)$
is nontrivial (e.g., for $G=\Z/n$ with $n$ not squarefree)
and the mixing shift of finite type is not just a fixed point,
we show the dynamical
zeta function for any such extension
is consistent with infinitely many topological conjugacy classes.
Independent of $\textnormal{NK}_1(\ZG)$, for every
nontrivial abelian $G$ we  show there exists a
shift of finite type with an infinite family of
mixing nonconjugate $G$ extensions
with the same dynamical zeta function. We define computable complete
invariants for the periodic data of the extension for $G$ not
necessarily abelian, and extend all the above results to the
nonabelian case.  There is other work on basic invariants.
The constructions require the ``positive K-theory''
setting for positive equivalence of matrices over $\ZG[t]$.
\end{abstract}

\tableofcontents

\section{Introduction}
One part of the celebrated  paper \cite{Livsic}
of Liv\v{s}ic  shows that for certain
hyperbolic dynamical systems $T:X\to X$,
if the restrictions of
H\"{o}lder functions $f$ and $g$ to
the periodic points are cohomologous
as point set maps (i.e. ignoring topology), then they are
H\"{o}lder cohomologous
in $(X,T)$ --- i.e.,
$f=g+r\circ T -r$, with the transfer function $r$
being H\"{o}lder continuous.
(For an excellent introdiction to
the Liv\v{s}ic theory and to
cocycles in dynamical systems,
see  \cite{HasselblattKatokbook}.)
The proof of Liv\v{s}ic works for functions
into a metrizable abelian
group.
This result was generalized to
nonabelian groups for shifts of finite type by
Parry (see Remark \ref{ParryLivsicRemark})  and Schmidt
\cite{Parrylivsic,Schmidt1999},
and to more sophisticated
systems by various authors (e.g. \cite{Parrylivsic,Schmidt1999,Kalinin2011,Sadovskaya2013}).

Parry posed
a bold related question
in the case $G$ is finite abelian.  For
$(X,T)$ a mixing SFT and $f: X\to G$,
a suitable dynamical zeta function $\zeta_f$ encodes for all $n,g$
the number of periodic orbits of size $n$ and weight $g$.
Then $\zeta_f = \zeta_g$ if and only if
there is a bijection $\beta : \text{Per}(X)\to \text{Per}(X)$ such
that $f\circ \beta$ and $g$ are cohomologous
as point set maps. Parry asked, for $f: X\to G$ continuous and $G$ a finite
abelian group: does the set of continuous $g: X\to G$  with
$\zeta_g = \zeta_f$ contain only finitely many continuous
cohomology classes?  Parry's question probed  not only a possible
direction for extending  the
Liv\v{s}ic result, but also
the strength of conjugacy invariants for mixing SFTs
and their group extensions.
(The classification of cohomology classes of
functions from $X$ into a group is a
version of the classification of group extensions
of a system $(X,T)$.)
%
%

We will show that
for
many groups $G$ (the finite groups $G$ with $NK_{1}(\ZG) \ne 0$), the answer to
Parry's question is negative for {\it every}
nontrivial dynamical zeta function.
   The ingredients
for  this are the following.
\begin {enumerate}
\item
Generalizing the Williams' theory for SFTs,
Parry showed that any $G$-extension of an SFT $(X,S)$
can be presented
by a square matrix $A$ over $\Z_+G$, and two such group extensions
are isomorphic if and only if their presenting matrices are
strong shift equivalent (SSE) over the positive semiring $\Z_+G$ of
the integral group ring $\ZG$.
The dynamical zeta
function, with coefficient ring $\ZG$, is then $\zeta (z) = (\det (I-zA))^{-1}$.
Parry's theory, which he never
published, is presented in \cite{BS05} (in
Appendix \ref{leftvsright}, we correct an error in
the presentation in \cite{BS05}).
\footnote{The algebraic
invariants here over $\Z$ (shift and strong shift equivalence,
  $\det(I-tA)$), are parallelled
  in the study of shifts of finite type with Markov measure,
  where a finitely generated abelian group appears in place of the finite group
  $G$ \cite{MarcusTuncelwps,
    ParryTuncel1982stoch},
  and positivity issues around  $\det (I-tA)$ and shift equivalence
 become more analytic and formidable
\cite{Handelman2011}.}

\item
By Theorem \ref{sseclassif}, taken from \cite{BoSc1},
for any ring $\mathcal R$
and shift equivalence (SE) class $\mathcal C$ of matrices over $\mathcal R$,
the collection of
SSE classes over $\mathcal R$  of matrices in $\mathcal C$ is
in bijective correspondence with the group $\nkone(R)$
of algebraic K-theory.
If $\nkone(R)$ is not trivial, then it is not finitely generated as a
group
\cite{Farrell1977, WeibelBook}.  We give more background on
$\nkone(R)$ in Appendix \ref{sec:nk1}, and give
some concrete examples in Appendix \ref{sec:nk1}.
\item
In this paper, given $\nkone(\ZG )$ nontrivial,
we construct, for
any nontrivial mixing
SFT $(X,S)$,
infinitely many $G$-extensions  of $(X,S)$ defined by
matrices which pairwise
are SE over $\Z_+G$ but are not SSE over $\ZG$ (and
hence are not SSE over $\Z_+G$). Consequently,
these extensions pairwise are eventually conjugate; are not conjugate;
and have the same isomorphism class of conjugacy classes (in the
abelian case, this means they have the  same dynamical zeta
function). The construction arguments, carried out in Section
 \ref{parrysection}, use
constructive  tools available in the polynomial matrix
setting.
\end{enumerate}

In Section \ref{sec:pqa},
 we discuss Parry's question in
more detail, and we use the structure of shift equivalence of matrices
over $\ZG$ to address and clarify some other cases of
Parry's question (Sec. \ref{sec:pqa}).
We show that for every nontrivial finite group $G$,
there is an infinite collection of matrices
which are not SE-$\ZG$ and  which can be realized
in mixing extensions of SFTs with the
same periodic data.
Consequently, for every nontrivial finite abelian group $G$,
there is a dynamical zeta function compatible with
infinitely many SE-$\ZG$ classes which can be realized
in mixing extensions of SFTs.
On the other hand,
we give a class of mixing examples for which the dynamical
zeta function determines the SE-$\ZG$ class
(regardless of $\text{NK}_1(\ZG )$). For such a class,
known invariants do not provide a negative answer
to Parry's question. In no nontrivial case do  known constructions
provide a positive answer to Parry's question.

One purpose of this paper
is to summarize and extend our understanding of the
algebraic invariants for and approaches to mixing finite group extensions
of shifts of finite type (which we need
anyway for Parry's question).
(In particular, for not necessarily abelian finite groups $G$,
we give complete and computable invariants for the periodic
data of the $G$ extension of a shift of finite type.)
  There are two parallel formulations
for this. One involves SSE of matrices over $\ZG$ (Section
\ref{sec:fge}).
 The other formulation is in terms of the ``positive K-theory''
of polynomial matrix presentations (Section \ref{sec:pmp}).
In Appendix \ref{zgprimse}, we work out results
involving primitivity (some of which we need for proofs)
and shift equivalence to extend the theory parallel
to the theory over $\Z$.
In Appendix \ref{leftvsright} we review the basic connection of
matrices over $\Z_+G$ to $G$-extensions, and correct a
mistake in \cite{BS05}. (The mistake is only that the defining matrix should be associated to a left action of $G$, not a right action.)
Some open problems are listed in Section 7.

Mike Boyle is happy to acknowledge support
during this work
from the Pacific Institute for the
Mathematical Sciences and the
University of British Columbia. We thank 
 the referee for a 
  very careful and detailed  report, 
which has  notably sharpened the exposition.

\section{Finite group extensions of SFTs via matrices
over $\ZG$}
\label{sec:fge}
In this section we give basic definitions for finite group extensions;
describe the presentation of group extensions of SFT by matrices
over $\Z_+G$; and describe algebraic
invariants of defining matrices which correspond to
invariants of the group extensions.
Cocycles and the group extension construction
are
an important tool much more generally in dynamics (topological,
measurable and smooth), but
for simplicity, we restrict definitions to our special case.
We recommend
 \cite{HasselblattKatokbook}
for an introduction to cocycles in dynamics;
\cite{BS05} is a reference with proofs adapted
to some
of the items below, as indicated by references.

{\bf Standing assumption.} Unless indicated otherwise, from here $G$ denotes a finite group.
All $G$ actions are assumed to be continuous and
free unless indicated.

{\bf Basic definitions \cite{BS05}.}
Let a pair $(X,S)$ represent a homeomorphism
$S: X\to X$. We will be interested in only
two cases: either $(X,S)$ is a shift of finite
type, or it is a countable union of finite orbits,
with the discrete topology (i.e., we neglect topology).
A {\it group extension} of $(X,S)$ by $G$ is
a pair $(Y,T)$
together with a continuous map $\pi :(Y,T) \to (X,S)$
such that $S \pi  =  \pi T$; two points have the same image
under $\pi$ if and only if they are in the same $G$-orbit;
and $\pi$ is a covering map (for each point $x$ of $X$,
there is a neigborhood $V$ such that there are $|G|$
disjoint neighborhoods in $Y$ such that the restriction
of $\pi$ to each is a homeomorphism onto $V$).
If $(X,S)$ is SFT, then a $G$ extension of $(X,S)$ is a
free $G$-SFT, i.e.
an SFT $(X,S)$ together with a continuous free action of $G$
which commutes with the shift.
We will always take $G$ acting
{\it from the left}, for a correct matrix correspondence
in the case $G$ is nonabelian -- see Appendix \ref{leftvsright}
for an explanation, which corrects the choice
\lq\lq from the right\rq\rq
in \cite{BS05}.

Two $G$ extensions $(Y_1,T_1), (Y_2,T_2)$ are
{\it conjugate}, or {\it isomorphic}, if there is a
homeomorphism $\phi: Y_1\to Y_2$ such that
$\phi T_1 = T_2 \phi$ and
$\phi (gy) = g\phi(y)$
for all $y \in Y_1$.
Equivalently, they are isomorphic as $G$-SFTs.
A $G$ extension of $(X,S)$ may be constructed
from a continuous function $\tau : X\to G$
(a {\it skewing function}) as follows. Let
$Y= X\times G$ and define $T: Y\to Y$ by the
rule $(x,g) \mapsto (S(x), g \tau (x))$, with
$\pi : X\times G \to X$ the obvious map
$(x,g)\mapsto x$.  Every $G$-extension of
an SFT is isomorphic to one constructed in
this way, and for brevity we may refer to
such a group extension as $(X,S,\tau )$.

We say $G$-extensions
$(X_1,S_1,\tau_1 )$ and  $(X_2,S_2,\tau_2 )$
are {\it eventually conjugate} if   for all
but finitely many $n>0$ the $G$-extensions
$(X_1,(S_1)^n,\tau_1 )$ and  $(X_2,(S_2)^n,\tau_2 )$
are conjugate.

In a system $(X,S)$, continuous functions
$\tau_1$ and $\tau_2$ from $X$ to $G$ are
{\it cohomologous} if there is a continuous
function $\gamma : X\to G$ such that  for all $x$,
$\tau_1 (x) = (\gamma (x) )^{-1}(\tau_2 (x)) \gamma (Sx)$ in the group $G$.
 For $G$-extensions $(X_1,S_1,\tau_1 )$
and $(X_2,S_2,\tau_2)$, the following are equivalent:
\begin{enumerate}
\item
The two $G$-extensions are isomorphic.
\item
There is a homeomorphism $\phi: X_1\to X_2$
such that $\phi S_1 = S_2 \phi$ (i.e. $\phi$
is a topological conjugacy) and
the functions $\tau_2\circ \phi $ and $\tau_1 $
are cohomologous in $(X_1, S_1)$.
\end{enumerate}

A  {\it mixing} $G$-extension of $(X,S)$ is
a $G$-extension $(Y,T)$ of $(X,S)$ such that
$(Y,T)$ is topologically mixing. This is distinctly a
stronger assumption than the assumption that
$(X,S)$ is mixing. The mixing $G$-extensions are the
fundamental, central case.
(The papers \cite{akmfactor, akmgroup} of Adler, Kitchens
and Marcus describe invariants with which
the classification of some $G$ extensions of SFTs can be reduced
to this central case.)

{\bf Presentation by matrices over $\Z_+G$ \cite{BS05}.}
Suppose $A$ is a square matrix with entries in $\Z_+G$.
Then $A$ may be viewed as the adjacency matrix
of a labeled  directed graph, with adjacency matrix $\overline A$
defining  an edge SFT
$(X,S)$,
by setting
\begin{equation} \label{zgpres}
\tau (x)\ =\ \textnormal{the label of the edge }x_0 \ .
\end{equation}
Then
$(X,S,\tau)$
is a group extension of the SFT $(X,S)$.
Every group extension of an SFT is isomorphic to one
of this type.

{\bf Mixing.}
For an element $x=\sum_g n_g g$ of $\ZG$, we write
$x\gg 0$ if $n_g>0$ for every $g$, and say
$x$ is $G$-positive.
For a matrix $A$
over $\ZG$,
$A\gg 0$ means every entry is $\gg 0$.
We define a  \gps  matrix to be a square matrix over
$\Z_+G$ such that $A^{n}\gg 0$ for some $n>0$.

A nonzero square matrix $A$ contains a maximum principal
submatrix with no zero row and no zero column; this is the
{\it nondegenerate core} of $A$. For a property $P$,
a  matrix $A$ is {\it essentially} $P$ if its nondegenerate core
is $P$.
A matrix $A$ over $\Z_+G$ defines a mixing $G$-extension if and only if
it is essentially \gps  (Proposition \ref{gmixa}).

NOTE: The $\Z_+$ matrix $\overline A$ being primitive does
not guarantee that $A$ is primitive. (E.g., $A=(e+e)$ over
$\ZG$ with $G= \Z/2\Z$.)

{\bf Conjugacy and eventual conjugacy.}
$G$-extensions of SFTs presented by
matrices $A,B$ over $\Z_+G$ are conjugate if and
only if the matrices $A,B$ are strong shift equivalent
(SSE) over $\Z_+G$.
This theory, due to Parry and never
published by him, is presented in \cite{BS05}.
By Proposition \ref{eventualconjugacy}, these $G$-extensions
are eventually conjugate
if and only if $A,B$ are shift equivalent (SE) over
$\Z_+G$.
By Proposition \ref{primitiveeventual},
two  \gps  matrices are SE over
$\Z_+G$ if and only if they are SE over
$\ZG$.

{\bf Refinement of SE-ZG by SSE-ZG.}
For any ring $\rr$, the refinement of
SE-$\rr$ by SSE-$\rr$ is captured by the group
$\nkone (\rr )$ of
algebraic $\K$-theory, as follows.
\begin{theorem} \label{sseclassif} \cite{BoSc1}
Suppose $A$ is a square matrix over a ring $\rr$.
\begin{enumerate}
\item
If $B$ is SE over $\rr$ to $A$, then
there is a nilpotent matrix $N$ over $\rr$
such that  $B$ is SSE over $\rr$ to the
matrix
$A\oplus N=
\begin{pmatrix} A & 0 \\ 0 & N
\end{pmatrix} $.
\item
The map $[I-tN] \to [A \oplus N]_{SSE}$ induces a bijection from
$\nkone(\mathcal R)$ to the set of SSE classes of matrices
over $\rr$ which are in the SE-$\rr$ class of $A$.
\end{enumerate}
\end{theorem}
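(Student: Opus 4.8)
The plan is to pass from the matrices $A,B$ over $\rr$ to the polynomial matrices $I-tA,\,I-tB$ over $\rr[t]$ and to argue with the positive‑equivalence calculus for such matrices, i.e. the $\rr[t]$‑analogue of Williams's theory. The governing principle is that $A$ and $B$ are SSE over $\rr$ if and only if $I-tA$ and $I-tB$ are connected by a finite chain of structure‑preserving (``positive'') elementary operations over $\rr[t]$. The atomic case rests on the block identity: for $R$ of size $m\times n$ and $S$ of size $n\times m$ over $\rr$, the matrix $\mat{I}{R}{tS}{I}$ over $\rr[t]$ reduces, by elementary row and column operations, to each of $\diag(I-tRS,\,I)$ and $\diag(I,\,I-tSR)$, which realizes the elementary strong shift equivalence $RS\leftrightarrow SR$ at the level of polynomial matrices. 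Since every move in a positive‑equivalence chain is left/right multiplication by a matrix in $\El(\rr[t])$ together with stabilization by identity blocks, such a chain carries a well‑defined total class in $K_1(\rr[t])$. I will use the splitting $K_1(\rr[t])=K_1(\rr)\oplus\nkone(\rr)$, together with the standard representation of $\nkone(\rr)$ by classes $[I-tN]$ with $N$ a nilpotent matrix over $\rr$.

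For part (1), I start from a shift equivalence $(R,S)$ of lag $\ell$ between $A$ and $B$, so $AR=RB$, $SA=BS$, $RS=A^{\ell}$ and $SR=B^{\ell}$, and construct an explicit elementary‑SSE chain from $B$ to $A\oplus N$. The idea is to work with triangular block matrices such as $\mat{A}{R}{0}{0}$ and $\mat{0}{0}{S}{B}$ and to slide the weight from the $A$‑block to the $B$‑block using the intertwining relations $AR=RB$ and $SA=BS$; the lag identities $RS=A^{\ell}$ and $SR=B^{\ell}$ force the chain to close up, and the discrepancy left behind is a nilpotent block $N$ assembled from the intertwiners together with the shift blocks that account for the lag. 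Verifying that the chain is a genuine SSE and that $N^{k}=0$ for some $k$ reduces to the four defining identities of the shift equivalence.

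For part (2) I would proceed in three steps. First, the map lands in the SSE classes inside the SE‑class of $A$: choosing $\ell$ with $N^{\ell}=0$, the pair $R'=\begin{pmatrix}A^{\ell}&0\end{pmatrix}$ and $S'=\begin{pmatrix}I\\0\end{pmatrix}$ satisfies $AR'=R'(A\oplus N)$, $S'A=(A\oplus N)S'$, $R'S'=A^{\ell}$ and $S'R'=(A\oplus N)^{\ell}$, so $A\oplus N$ is SE over $\rr$ to $A$; in particular the base point $N=0$, with class $[I]=0\in\nkone(\rr)$, corresponds to the SSE class of $A$ itself. Second, surjectivity is immediate from part (1): any $B$ in the SE‑class of $A$ is SSE to some $A\oplus N$. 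Third, for well‑definedness and injectivity I read off the $\nkone$‑valued invariant of an SSE: since $I-t(A\oplus N)=(I-tA)\oplus(I-tN)$, the $(I-tA)$‑block is common to the whole family and cancels in the relative $K_1(\rr[t])$‑class of a positive equivalence, leaving exactly $[I-tN]\in\nkone(\rr)$. Hence $A\oplus N_{1}$ and $A\oplus N_{2}$ are SSE over $\rr$ if and only if $[I-tN_{1}]=[I-tN_{2}]$, which is precisely the claim that the map is a well‑defined bijection.

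The main obstacle is the completeness of the $\nkone$ invariant in the third step, namely proving that the relative class in $K_1(\rr[t])$ attached to a positive equivalence is (i) independent of the chain used to realize a given SSE and (ii) a \emph{complete} invariant of SSE within the SE‑class. For (i) one must show that the only ambiguity is a class pulled back from $K_1(\rr)$, which dies on passing to $\nkone(\rr)=K_1(\rr[t])/K_1(\rr)$; this is where the Wagoner‑type bookkeeping of elementary factorizations is essential. For (ii) one must show that vanishing of the relative $\nkone$ class \emph{upgrades} the abstract $\GL(\rr[t])$‑equivalence underlying a shift equivalence into a genuine positive, structure‑preserving equivalence, hence into an honest strong shift equivalence, rather than a mere equality in $K_1$. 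Carrying out this upgrade, and checking that the common $(I-tA)$‑block really does contribute trivially to the relative invariant, is the technical heart of the argument.
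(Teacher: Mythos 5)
You are attempting to prove a statement that the paper does not actually prove: Theorem \ref{sseclassif} is quoted from \cite{BoSc1}, and — decisively for your attempt — the paper's own Corrections section at the end retracts part (2) as stated. The bijection is not from $\nkone(\rr)$ but from a quotient $\nkone(\rr)/E(A,\rr)$, where the ``elementary stabilizer'' $E(A,\rr)$ is the subgroup of classes $[U]\in\nkone(\rr)$ such that $U(I-tA)=(I-tA)E$ for some elementary matrix $E$. Since for commutative $\rr$ the union of these stabilizers over all $A$ is all of $NSK_1(\rr)$, the stabilizer is nontrivial for suitable $A$ whenever $NSK_1(\rr)\neq 0$; consequently no argument, including yours, can establish injectivity of the map from $\nkone(\rr)$ in full generality.

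The step of yours that fails is exactly the one you flagged as the technical heart. You assert that in an SSE between $A\oplus N_1$ and $A\oplus N_2$ the common block $I-tA$ ``cancels in the relative $K_1(\rr[t])$-class,'' leaving $[I-tN_1]=[I-tN_2]$. But $I-tA$ is typically not invertible over $\rr[t]$ (already $1-t$ is not a unit of $\Z[t]$ for $A=(1)$), so it has no class in $K_1(\rr[t])$ and there is nothing to cancel. What an elementary equivalence between $(I-tA)\oplus(I-tN_1)$ and $(I-tA)\oplus(I-tN_2)$ actually yields, after absorbing the invertible blocks via $\diag(X,Y)\sim\diag(XY,I)$ for $Y$ invertible, is an elementary equivalence between $(I-tA)(I-tN_1)$ and $(I-tA)(I-tN_2)$; equivalently, with $U=(I-tN_2)(I-tN_1)^{-1}$, that $(I-tA)U$ is elementarily equivalent to $I-tA$. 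This places $[U]$ only in the stabilizer subgroup, not at $0$, which is precisely the correction above. Your map is well defined and (granting part (1)) surjective, but injectivity genuinely fails unless $E(A,\rr)=0$. Separately, your route to part (1) is asserted rather than constructed, and it diverges from the actual proof: the real argument does not ``slide weights'' along the shift equivalence, but uses Fitting's theorem (Proposition \ref{3waystose}) to convert the SE into $I-tB=U(I-tA)V$ with $U,V\in\GL(\rr[t])$, observes that $W=VU$ evaluates to the identity at $t=0$ and hence defines a class in $\nkone(\rr)$, replaces $W$ by $I-tN$ via Higman's trick, and then converts $(I-tN)(I-tA)\sim I-t(A\oplus N)$ back into an SSE using the criterion that SSE over $\rr$ is equivalent to $\El(\rr[t])$-equivalence of $I-tA$ and $I-tB$ (Theorem \ref{sseaseq}). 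I do not see how your lag identities alone would produce the nilpotent block; its existence comes from the $K_1(\rr[t])=K_1(\rr)\oplus\nkone(\rr)$ splitting, not from matrix combinatorics.
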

For more on $\nkone$, see Appendix \ref{sec:nk1}.

{\bf Periodic data and trace series.}
We consider $G$-extensions $(X,S,\tau )$ such that
$(X,S)$ has only finitely many orbits of size $n$,
and formulate ``periodic data'' which give a complete
invariant of isomorphism for the group extension
obtained by restriction of  $S$ and $\tau$ to
the periodic points of $S$, with the discrete
topology. (Caveat: in the context of a
Liv\v sic type theorem,
``periodic data''
may refer to the cohomology class of the
restriction of $\tau$ to the periodic points,
with discrete topology \cite{Sadovskaya2013}.
Our series definition \eqref{perdatdefn} is
equivalent for the case we consider,
being a complete invariant for that class.)

\begin{definition}\label{kappadefn}
For $g\in G$, let $\kappa (g)$ denote
the conjugacy class of $g$ in $G$
($=\{g\}$ if $G$ is abelian). Let $\Conj G$ denote the free abelian
group with generators the conjugacy classes of $G$.
We also let $\kappa $ denote
the induced group homomorphism
$\ZG \to
\Conj G$ given by
$\sum n_g g \mapsto \sum n_g \kappa (g)$.
We use $\kappa$ similarly  for other induced maps.
\end{definition}

If $ (X_1,S_1,\tau_1 )$ is a $G$ extension
and $x\in \text{Fix}(S^n)$,
set $w(x)=\tau( x)\tau (Sx) \dots \tau(S^{n-1}x)$
and $\kappa_n (x)=\kappa (w(x))$.
If a topological conjugacy $\phi : X_1\to X_2$ sends $\tau_1$
to a function cohomologous to $\tau_2$,
 and $x\in \text{Fix}(S^n)$,
then $\kappa_n (x)= \kappa_n (\phi (x))$.
Given a $G$-extension of $(X,S)$ defined by $\tau $
and a conjugacy class $c$ from $G$,
define the
{\it periodic data} to be the formal
power series with coefficients in $\Conj G$,
\begin{equation} \label{perdatdefn}
  P_{\tau}= \sum_{n=1}^{\infty}
\Big(\sum_{x\in\textnormal{Fix}(S^n)}
\kappa \big(\tau( x)\tau (Sx) \dots \tau(S^{n-1}x) \big)
\Big)
 t^n \ .
\end{equation}
Then for $G$ extensions
$ (X_1,S_1,\tau_1 )$ and $ (X_2,S_2,\tau_2 )$,
a necessary and sufficient
condition for isomorphism of the
$G$ extensions obtained by restriction to
their periodic points (neglecting topology)
is that $P_{\tau_1}=P_{\tau_2}$.

\begin{definition}
Let $A$ be a square matrix over a ring.
The {\it trace series} of $A$ is
\begin{equation} \label{tsdefn}
\mathcal T_A = \sum_{n=1}^{\infty}\trace (A^n)t^n \ .
\end{equation}
For $A$ a matrix over $\ZG$, the {\it conjugacy class trace series} of $A$ is
\begin{equation} \label{ctsdefn}
\kappa \mathcal T_A =
    \sum_{n=1}^{\infty}\kappa \big(\trace (A^n)\big)t^n \ .
\end{equation}
The trace series of $A$ and $B$ are
\it{conjugate} if
 $\kappa \mathcal T_A = \kappa \mathcal T_B$.
\end{definition}
We relate $\kappa \mathcal T_A$
to existing $K$-theory invariants
\cite{SheihamCohn}
in Proposition \ref{kappatau}.
If
the extension $(X,S,\tau )$ is defined
by a matrix $A$ over $\Z_+G$, then
\begin{equation} \label{perdat}
  P_{\tau} = \mathcal T_A \ .
\end{equation}

{\bf Periodic data for G abelian. }
If $G$ is abelian,
we identify
$\kappa (g)$ with $g\in \ZG$. Then
the periodic data $P_{\tau} $
for the extension $(X,S,\tau )$
is encoded by
the usual
dynamical zeta function,
taken with coefficients in $\ZG$,
 \[
\zeta_{\tau} (z) = \text{exp}
\Big( \sum_{n=1}^{\infty}
\sum_{x: S^n x =x}
\tau (x) \tau(Sx) \cdots \tau(S^{n-1}x) \frac{z^n}n \Big) \  .
\]
When $\tau : X\to G $ is constructed from a matrix $A$ over $\Z_+G$ as above,
\begin{equation} \label{zetaequation}
\zeta_{\tau } (t)\  =\ \text{exp}\sum_{n=1}^{\infty} \frac 1n \textnormal{tr}(A^n) t^n
\ =\ (\det (I-tA))^{-1}
\end{equation}
and $\det (I-tA)$ is a complete invariant for the periodic data.
(Here, $\zeta_{\tau}$ is an example of a dynamical zeta function.
There is
a huge literature using  variants of such
functions; one survey for nonexperts
is \cite{Pollicott2011}.)

{\bf Periodic data for general $G$.}
Suppose $A$ has entries in $\Z_+ G$ where
$G$ need not be abelian. The usual polynomial
$\det(I-tA)$ need not be well
defined. Nevertheless,
by Proposition \ref{finitetracedata},
the finite sequence
$(\kappa (\trace (A^k))_{ 1\leq k \leq mn}$ determines all of $\kappa \mathcal T_A$, and the sequence
 $(\kappa (\trace (A^k))_{ 1\leq k < \infty } $ satisfies a
readily computed recursion relation with  coefficients
in $\Z$. A connection of $\kappa \mathcal T_A$ and K-theory
is described in Proposition \ref{kappatau}.

{\bf Periodic data, SE and SSE.}
If $A,B$ are SSE over $\ZG$, then
$\kappa \mathcal T_A =\kappa \mathcal T_B$
(Proposition \ref{finitetracedata}).
If $G$ is a finite abelian group, then
$\det (I-tA)$ is an invariant of SE over $\ZG$, as follows.
With $B$  SE over $\ZG$ to $A$,  by Theorem
\ref{sseclassif} there exists a nilpotent matrix $N$
such that $A\oplus N$ is SSE over $\ZG$ to $B$, and
then
\[
\det(I-tB)= \det(I-tA) \det(I-tN) = \det (I-tA)
\]
with the second equality holding by Proposition \ref{sk1fact}.

For $G$ not abelian,
$\ZG$ might contain
 nonzero nilpotent elements (for example $\mathbb{Z}[D_{4}]$, where $D_{4}$ is the dihedral group of order 4, contains nilpotent elements), and
in this case the periodic data will not be
invariant under SE over $\ZG$.
In any case, if $A$ and $B$ are SE over $\ZG$ with
lag $\ell$, then $\kappa (\trace (A^k))=
\kappa (\trace (B^k))$ for all $k\geq \ell$,
and then $\kappa \mathcal T_A=\kappa \mathcal T_B$ if and only if
$\kappa (\trace (A^k))=
\kappa (\trace (B^k))$ for all $k<\ell$.

{\bf Flow equivalence.}
Complete invariants of
$G$-equivariant  flow equivalence for $G$-SFTs
are known in terms of algebraic invariants
associated to  a presenting
$\Z_+G$ matrix $A$ (see \cite{BS05} for the case $\overline A$
primitive and \cite{bce:gfe} for the general case).


\section{Finite group extensions of SFTs via matrices
over $\ZG[t]$}
\label{sec:pmp}

Invariants of group extensions of SFTs  can be developed via
matrices over $\Z_+G$ with  the SSE/SE  approach, or via matrices
with entries from the polynomial ring $\Z_+G[t]$  with the
``positive K-theory'' approach of \cite{BW04,B02posk}).
In this section we recall and develop what we need of
the positive K-theory for constructions, and
summarize algebraic invariants in this setting.

In this paper, we formulate positive equivalence  in terms
of finite matrices.
The equivalent infinite matrix formulation
of positive equivalence described later
in this section is used
 in
\cite{BW04,B02posk}.
Other  formulations
vary a bit among \cite{BW04},
\cite{B02posk} and the present paper, but
they are equivalent where  they overlap.
The paper \cite{BW04} is written for matrices
over $\Z$ and $\Z_+$, outside of Section 7,
which address matrices over integral
group rings.

{\bf Positive equivalence.}

Let $R$ be a ring (always assumed to contain 1).
A {\it basic elementary matrix} over $R$ is a square
matrix over $R$ equal to the identity except perhaps in a single
offdiagonal entry.

Below,   $0_n$ is the $n\times n$ zero matrix, $I_n$ is
the $n\times n$ identity matrix, and $0,I$ represent zero, identity
matrices of appropriate sizes.

Let $\mathcal M$ be a set of square matrices $I-A$ over $R$
such that
\[
I-A\in \mathcal M \implies I-(A\oplus 0_n)\in \mathcal M
\ \ , \quad
\textnormal{ for all } n>0 \ \ .
\]
  Let $\mathcal S$ be a subset of $R$ containing zero and one.
A {\it basic elementary equivalence over $\mathcal S$ in $\mathcal M$}
is an equivalence of the form $ I-A \mapsto U(I-A)= I-B$
or $ I-A \mapsto (I-A)U= I-B$
 such that $U$ is a basic
elementary matrix, and both $I-A$ and $I-B$ are in
$\mathcal M$.   An equivalence $I-A \mapsto U(I-A)V=I-B$ is an
{\it elementary equivalence over $\mathcal S$ in $\mathcal M$} if
for some $k$, $(U\oplus I_k,V\oplus I_k) :
I-(A\oplus I_k) \to I-(B\oplus I_k) $ is a composition of
basic elementary equivalences over $\mathcal S$ in $\mathcal M$.
We say square matrices $I-A,I-B$ are
{\it elementary  equivalent over $\mathcal S$ in
$\mathcal M$} if there exist $j,k$ such that there is an
elementary equivalence over $\mathcal S$
in $\mathcal M$ from $I-(A\oplus I_j) $ to
$I-(B\oplus I_k)$.


\begin{definition}\label{nzcdefn} Suppose $R$ is an ordered ring with $R_+$
containing 0 and 1.
A square matrix $A$ over $R_+[t]$ has the
NZC property if for all $n\geq 0$, every diagonal entry of
$A^n$ has constant term zero.
$\nzc (R_+[t])$ is the set of square
matrices $A$ over $R_+[t] $ having the NZC property.
\end{definition}

For example,
the matrix
$
\left( \begin{smallmatrix}
t & 3+t^3 \\  2t^5 & t
\end{smallmatrix}
\right)
$
is in
$\nzc (Z_+[t]) $;  the matrix
$
\left( \begin{smallmatrix}
t & 3+t^3 \\ 1+ 2t & t
\end{smallmatrix}
\right)
$ is not.
The square matrices over $tR_+[t]$ are contained in
$\nzc (R_+[t])$.

\begin{definition}\label{defnposeq} Suppose $R$ is an ordered ring with $R_+$
containing 0 and 1. With respect to this ordered ring,
two matrices are {\it positive equivalent} if they are
elementary equivalent over $R_+$ in  $\mathcal M$,
where $\mathcal M $ is the set of square matrices
of the form $I-A$ with $A$ in $\nzc (R_+)$.

In this paper, positive equivalent without modifiers
means positive equivalent with respect to
$R=\ZG[t]$ and $R_+ = \Z_+G[t]$.
\end{definition}

{\bf Positive equivalence and SSE.}
The next result is a trivial corollary of \cite[Theorem 7.2]{BW04},
but it takes a little space to explain why this is so.
\begin{theorem}\label{sseasposeq}
  \cite[Theorem 7.2]{BW04}
Let $G$ be a group and $\ZG$
its integral group ring. Let $A,B$ be matrices in
$\nzcgpp$ and let $A^{\diamond},B^{\diamond}$ be square matrices over
$\Z_+G$ such that $I-A$ and $I-B$ are
(respectively) positive equivalent to
$I-tA^{\diamond}$ and $I-tB^{\diamond}$. Then the following are equivalent.
\begin{enumerate}
\item
$A^{\diamond}$ and $B^{\diamond}$ are SSE over $\Z_+G$.
\item
$I-A$ and $I-B$ are positive
 equivalent.
\end{enumerate}
Moreover, for every matrix $A$ in $\nzcgpp $,
there is a matrix $A^{\diamond}$ over $\Z_+G$ such that
$I-A$ is positive equivalent to
$I-tA^{\diamond}$.
\end{theorem}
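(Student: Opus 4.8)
The strategy is to reduce both parts of the statement to the linear case $I-tC$ with $C$ over $\Z_+G$, because it is precisely for such linear presentations that \cite[Theorem 7.2]{BW04} identifies positive equivalence with SSE over $\Z_+G$. Everything else is bookkeeping: producing the linearization promised in the final clause, and verifying that the finite-matrix notion of positive equivalence in Definition \ref{defnposeq} agrees, on linear matrices, with the (infinite-matrix) formulation under which \cite{BW04} is written. I would prove the final clause first, since the biconditional rests on it.

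For the linearization, I would argue by induction on the total $t$-degree of $A\in\nzcgpp$. Viewing $I-A$ as a labeled graph whose edges carry monomials $ct^k$ with $c\in\Z_+G$, the plan is to subdivide every edge of degree $k\geq 2$ into edges of degree $1$. To subdivide a monomial $ct^k$ in entry $(i,j)$, I would adjoin a zero row and column (legitimate, as $\mathcal M$ is closed under $I-A\mapsto I-(A\oplus 0_n)$) and then apply basic elementary equivalences over $\Z_+G[t]$ that route the contribution as a factor $t$ from $i$ into the new coordinate and $ct^{k-1}$ out of it, while deleting the $ct^k$ term from position $(i,j)$; the composite path weight is preserved. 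Repeating lowers all degrees to $1$ and yields $I-tA^{\diamond}$ with $A^{\diamond}$ over $\Z_+G$. The one point requiring care is that each intermediate matrix stays in $\mathcal M$, i.e. keeps the NZC property: the factor $t$ placed on each new edge forces the diagonal entries of all powers to retain zero constant term, so no step leaves $\mathcal M$.

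Granting linearization, the biconditional is formal. Since positive equivalence is an equivalence relation, the hypotheses $I-A\sim I-tA^{\diamond}$ and $I-B\sim I-tB^{\diamond}$ give that $I-A$ and $I-B$ are positive equivalent if and only if $I-tA^{\diamond}$ and $I-tB^{\diamond}$ are. I would then invoke \cite[Theorem 7.2]{BW04} to rewrite the latter as: $A^{\diamond}$ and $B^{\diamond}$ are SSE over $\Z_+G$. The mechanism behind that step is the positive-equivalence lift of an elementary strong shift equivalence $A^{\diamond}=RS$, $B^{\diamond}=SR$: passing through $\left(\begin{smallmatrix} I & -tR \\ -S & I\end{smallmatrix}\right)$ by left and right multiplication by basic elementary matrices with entries drawn from $S$ and $tR$ carries $I-tA^{\diamond}$ (stabilized) to $I-tB^{\diamond}$ (stabilized), and conversely a positive equivalence of linear presentations decomposes into such lifts. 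Chaining the two biconditionals gives (1)$\Leftrightarrow$(2).

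The main obstacle is not the formal chain but the reconciliation the excerpt itself flags: the formulations of positive equivalence vary among \cite{BW04,B02posk} and the present paper, and \cite{BW04} is written over $\Z$ with an infinite-matrix model outside its Section 7. I would therefore isolate a lemma stating that, restricted to linear presentations $I-tC$ with $C$ over $\Z_+G$, the finite-matrix relation of Definition \ref{defnposeq} coincides with the relation used in \cite[Theorem 7.2]{BW04}, and that each linearization step above can be realized without ever introducing a diagonal constant term. With this lemma established, the statement is genuinely a corollary of \cite[Theorem 7.2]{BW04}, as claimed.
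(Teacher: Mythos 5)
There is a genuine gap, and it sits exactly where the paper itself warns that ``it takes a little space to explain why'' the theorem follows from \cite[Theorem 7.2]{BW04}. That theorem is not stated for linear presentations in the form you use it. It says: for $A,B$ in $\nzcgpp$, the matrices $I-A$ and $I-B$ are positive equivalent if and only if $A^{\sharp}$ and $B^{\sharp}$ are SSE over $\Z_+G$, where $A^{\sharp}$ is the particular matrix produced by the construction of \cite[Sec. 7.2]{BW04} (this also disposes of the ``Moreover'' clause without any new argument). Applied to the linear presentations $I-tA^{\diamond}$ and $I-tB^{\diamond}$, it therefore yields SSE of $(tA^{\diamond})^{\sharp}$ and $(tB^{\diamond})^{\sharp}$ --- not of $A^{\diamond}$ and $B^{\diamond}$. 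The bridge you are missing is the Claim that the paper isolates and proves: for any square matrix $M$ over $\Z_+G$, the matrix $(tM)^{\sharp}$ is SSE over $\Z_+G$ to $M$. The paper proves this by identifying $(tM)^{\sharp}$ with the edge-adjacency matrix $C$ of the labeled graph of $M$ and exhibiting an explicit elementary SSE: $R(i,a)=1$ if $i$ is the initial vertex of the edge $a$, $S(a,j)=g_a$ if $j$ is its terminal vertex, so that $M=RS$ and $C=SR$. Your substitute for this --- the sentence asserting that ``a positive equivalence of linear presentations decomposes into such lifts'' of elementary strong shift equivalences --- is precisely the hard direction of the whole statement, asserted without proof; only the easy direction (an elementary SSE $A^{\diamond}=RS$, $B^{\diamond}=SR$ lifts to a positive equivalence of $I-tA^{\diamond}$ and $I-tB^{\diamond}$, via the polynomial SSE identities as in Proposition \ref{hms}) is routine. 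So as written, your chain of biconditionals either invokes \cite[Theorem 7.2]{BW04} in a form it does not have, or silently assumes the Claim.

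A secondary, fixable problem is your linearization. Matrices in $\nzcgpp$ may have off-diagonal entries with nonzero constant term (the paper's own example has an entry $3+t^3$), and subdividing only the edges of degree $k\geq 2$ can never produce a matrix over $t\Z_+G[t]$, hence never reaches the form $I-tA^{\diamond}$. One must first kill constant terms by row (or column) positive equivalences, as in the proof of Proposition \ref{sizematters}, where the NZC condition is what guarantees this elimination terminates; only then does a companion-matrix or subdivision step (Lemma \ref{sharplemma}) reduce to degree one. For the theorem at hand the paper sidesteps all of this by citing the $A^{\sharp}$ construction of \cite[Sec. 7.2]{BW04} directly.
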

\begin{proof}
  The construction in \cite[Sec. 7.2]{BW04} produces
  from $A$ in $\nzcgpp$ a matrix $A^{\sharp}$
  over $\Z_+G$ such that there is a positive equivalence from $I-A$
  to $I-tA^{\sharp}$. Then \cite[Theorem 7.2]{BW04} states (with
  different  terminology) that
  $I-A$ and $I-B$ are positive equivalent if and only if
  $A^{\sharp}$ and $B^{\sharp}$ are SSE-$\Z_+G$.
  Now assume the Claim: for any square matrix $M$ over
  $\Z_+G$,   $(tM)^{\sharp}$ is SSE-$\Z_+G$ to $M$. Then we have
  \begin{align*}
    &\ A^{\diamond} \text{ and } B^{\diamond}
    \text{ are SSE over }\Z_+G \\
    \iff &\ (tA^{\diamond})^{\sharp}\text{ and }(tB^{\diamond})^{\sharp}
    \text{ are SSE over }\Z_+G \\
\iff &\ I-tA^{\diamond}\text{ and }I-tB^{\diamond}
 \text{ are positive equivalent }\\
    \iff &\ I-A \text{ and } I-B \text{ are positive equivalent. }
    \end{align*}
  It suffices then to prove the Claim.

  Suppose $M$ is square over $\Z_+G$.
  Let $\mathcal G$ be the $G$-labeled  graph with
  adjacency matrix $M$. Let $\mathcal H $ be
  the $G$-labeled  graph with adjacency matrix $C$
  such that the  vertices of $\mathcal H$ are the   edges of $\mathcal G$,
  and $C$ is zero except that
  $C(a,b)$ is the label
  $g=g_a$ of edge $a$ in $\mathcal G$ if
  the terminal vertex of $a$ equals the inital
  vertex of $b$.
  By definition  in \cite[Sec.7]{BW04}
  (note the ``Special Case'' remark  above \cite[(2.6)]{BW04}),
  $(tM)^{\sharp}$ will be the adjacency matrix $C$ of $\mathcal H$.
  (The chosen ordering of indices to define an actual matrix
  won't affect the SSE-$\Z_+G$ class.)
  Explicitly, define matrices $R,S$, which are zero except for:
  $R(i,a)= 1$ if $i$ is the initial vertex of
  $a$; $S(a,j)= g_a$ if $j$ is the terminal
  vertex of the edge $a$. Then $M=RS$ and $C=SR$.
  \end{proof}

  \begin{notation} For a matrix $A$ in $\nzcgpp$, we will
  use $A^{\diamond}$
to denote a matrix over $\Z_+G$ such that $I-tA^{\diamond}$ is positive
equivalent to $I-A$.
\end{notation}


The connection to shifts of finite
type explained in \cite{BW04} is less straightforward for
$\nzc (\Z_+G[t])$ than for matrices over
$t\Z_+G[t]$. However,
$\nzc (R_+[t])$  is good for constructions
(e.g., it is necessary for  Proposition \ref{hms}).
  Most importantly: if in the definition
  \ref{defnposeq} of positive equivalence we replace
  $\nzc (R_+)$ with the set of square matrices over $t\Z_+G[t]$,
  then the implication $(1)\implies (2)$ of Theorem
  \ref{sseasposeq} would fail
  (see \cite[Remark 6.4]{BW04}).

The setting of positive equivalence
has been  useful for constructing
 conjugacies between SFTs and
between $G$-SFTs
  \cite{S12,S7,S6,Long2009}.
Positive equivalence constructions with matrices over
$\Z_+G$ (not over $\Z_+G[t]$) are fundamental
for the classification of $G$-SFTs up to equivariant
flow equivalence in \cite{BS05, bce:gfe}.

Recall that a matrix is {\it nondegenerate} if it has no zero row and
no zero column.  If  row $i$ or column $i$ of a matrix is zero,
then we say that the index $i$ is removable.
For a square matrix $A$, let $A=A_{0}$. Given
$A_{k}$, define $A_{k+1}=(0)$ if every index of $A_{k}$ is removable;
otherwise, define $A_{k+1}$ to be the principal submatrix of $A_k$
on the nonremovable indices. For some $k$, $A_k=A_{k+1}$, and
we call this matrix the {\it core} of $A$.
 A square matrix over $\Z_+G$ is always SSE over $\Z_+G$
to its core.

By  Theorem \ref{sseasposeq}, all matrices
  $A^{\diamond}$ over $\Z_+G$ with $I-tA^{\diamond}$ positive equivalent
  to a given $I-A$ lie in the same SSE-$\Z_+G$ class.
So, given $A$, whether the core
of $A^{\diamond}$ is  $G$-primitive does
not depend on the choice of $A^{\diamond}$.
Similarly, given $A$, the following are
equivalent:
The choices are for $A^{\diamond}$, not for the core once
  $A^{\diamond}$ has been chosen.
\begin{enumerate}
\item Some choice of
$A^{0}$ has core zero.
\item Every choice of
$A^{0}$ has core zero.
\item
Every $A^{\diamond}$ is SSE over
$\Z_+G$ to $(0)$.
\item $I-A$ is positive equivalent to
$I$.
\end{enumerate}
{\bf Some technical results.}
The main purpose of this subsection
is to prove its Propositions, which we need
later in proofs.

Suppose $A$ is a square matrix over
$t\Z_+G [t]$, say $A = \sum_{i=1}^k A_kt^k $,
with the $A_k$ matrices over $\Z_+G$.
As in \cite{BoSc3},
 define the matrix
\begin{equation}\label{sharpdefinition}
\As
=
\begin{pmatrix}
A_1 & A_2 &A_3& \dots &A_{k-2}&A_{k-1} & A_k \\
I   & 0   &0  & \dots & 0     & 0      & 0  \\
0   & I   &0  & \dots & 0     & 0      & 0  \\
0   & 0   & I & \dots & 0     & 0      & 0  \\
\dots &\dots &\dots &\dots &\dots &\dots &\dots  \\
0   & 0   & 0 & \dots &I      & 0     & 0  \\
0   & 0   & 0 & \dots &0      & I     & 0
\end{pmatrix} \ .
\end{equation}

\begin{remark} \label{sharpremark}
If $B$ is a  matrix
with all entries in $\Z_+G[t]$,
$\overline B$  is the matrix defined
by applying the
augmentation $\ZG\to  \Z$ entrywise (Definition \ref{augdefn}).
Then for $A$ over $t\Z_+G[t]$, we have
$\overline{(A^{\Box})}= (\overline{A})^{\Box}$,
and the notation $\overline \As$ is unambiguous.
\end{remark}

\begin{lemma} \label{sharplemma}
Suppose $A$ is a square matrix over
$t\Z_+G[t]$. Then the matrices $I-A$ and $I-t\As$ are
positive equivalent.
\end{lemma}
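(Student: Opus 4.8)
The plan is to realize the passage from $I-t\As$ to $I-A$ as an explicit finite sequence of positive (block) elementary equivalences, collapsing the companion-type linearization $\As$ back to the polynomial $A=\sum_{i=1}^k A_it^i$. Write $A$ as $m\times m$, so that $\As$ is $km\times km$ and splits into $k\times k$ blocks of size $m$. Since $\mathcal M$ is closed under $A\mapsto A\oplus 0_n$, the matrix $I-A$ is positive equivalent to $(I-A)\oplus I_{(k-1)m}=I_{km}-(A\oplus 0_{(k-1)m})$ (take $j=(k-1)m$, $k=0$ in the stabilization clause, where the two endpoints literally coincide). It therefore suffices to connect $I-t\As$ to this stabilization through operations with coefficients in $\Z_+G[t]$.

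First I would run a column sweep. In block form the diagonal blocks of $I-t\As$ below the first are $I$, the subdiagonal blocks are $-tI$, and block row $1$ is $(I-tA_1,-tA_2,\dots,-tA_k)$. Proceeding from $j=k-1$ down to $j=1$, add $t$ times block-column $j+1$ to block-column $j$. Setting $B_j=A_j+tA_{j+1}+\cdots+t^{k-j}A_k$, so that $B_k=A_k$, the telescoping relation $B_j=A_j+tB_{j+1}$ gives $tB_1=A$; one checks that the sweep clears each subdiagonal block $(j+1,j)$ via $-tI+tI=0$, installs $-tB_j$ in block $(1,j)$, and in the last step turns the $(1,1)$ block into $I-tB_1=I-A$. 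The outcome is block upper triangular with $I-A$ in position $(1,1)$, identities on the remaining diagonal, and $-tB_j$ in positions $(1,j)$ for $2\le j\le k$. Next I would run a row sweep: for each $j$ from $2$ to $k$, add $tB_j$ times block row $j$ (now equal to $(0,\dots,I,\dots,0)$) to block row $1$; since the $(j,j)$ block is $I$, this cancels $-tB_j$ in block $(1,j)$ and disturbs nothing else, leaving $(I-A)\oplus I_{(k-1)m}$, which is positive equivalent to $I-A$ by the stabilization above.

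Two legality conditions must hold at every stage, and these are the real content. Each block operation decomposes into basic elementary equivalences whose coefficients are $t$ or the entries of $tB_j$, all lying in $t\Z_+G[t]\subseteq\Z_+G[t]$ and hence admissible (nonnegative). The main obstacle is the NZC constraint: every intermediate matrix, \emph{including} those occurring between the individual basic operations composing a single block operation, must have the form $I-B$ with $B\in\nzc(\Z_+G[t])$. The clean way to guarantee this is to track the invariant that the off-identity part $B$ stays over $t\Z_+G[t]$ throughout. Each operation only adds factor-$t$ multiples of rows or columns whose nonzero entries already carry a factor of $t$, and the main-diagonal entries of $B$ are never altered (the source column $j+1$ is zero in block row $j$, and for the row sweep the source block row $j$ is supported off the first block column). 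Granting the invariant, NZC is automatic, since $B$ over $t\Z_+G[t]$ forces $B^n$ over $t^n\Z_+G[t]$, whence every diagonal entry of $B^n$ has zero constant term.

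The one point demanding care is that within each block operation the basic operations act on disjoint source and target rows (resp. columns), so that the sources are undisturbed and the factor-$t$ invariant survives each individual basic step rather than merely at the block level; verifying this bookkeeping is what makes the argument rigorous, after which the base case $k=1$ (where $\As=A_1$ and $I-t\As=I-A$) and the inductive bookkeeping are routine.
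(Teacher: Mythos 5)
Your proof is correct and is essentially the paper's own argument: the paper performs exactly this column sweep (adding $t$ times block column $j+1$ to block column $j$, telescoping to put $I-A$ in the corner and clear the subdiagonal) followed by a row sweep clearing block row $1$, written out explicitly for the case $k=3$ with the remark that the general case and the factorization into basic positive equivalences are clear. One small slip worth noting: your parenthetical claim that the diagonal entries are never altered fails at the final column step $j=1$, since block $(1,2)$ is nonzero and the $(1,1)$ block changes from $tA_1$ to $A$; this is harmless, because the invariant that actually matters — every entry of the non-identity part stays in $t\Z_+G[t]$ with nonnegative coefficients — still holds there, which is what your argument really uses.
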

\begin{proof}

The proof is clear from the case $k=3$,
as follows. The given multiplications by elementary matrices
can be factored as a composition of basic positive equivalences.
\begin{align*}
\begin{pmatrix}
I-tA_1 & -tA_2 &-tA_3\\
-tI   & I   &0  \\
0   & -tI   &I
\end{pmatrix}
\begin{pmatrix}
I & 0 &0\\
0   & I   &0  \\
0   & tI   &I
\end{pmatrix}
\begin{pmatrix}
I & 0 &0\\
tI   & I   &0  \\
0   & 0   &I
\end{pmatrix}
\ &= \
\begin{pmatrix}
I-A & -tA_2 -t^2A_3 &-tA_3\\
0  & I   &0  \\
0   & 0   &I
\end{pmatrix}
\\
\begin{pmatrix}
I & A_2 &0\\
0   & I   &A_3  \\
0   & 0   &I
\end{pmatrix}
\begin{pmatrix}
I-A& -tA_2 &-tA_3\\
0  & I   &0  \\
0   & 0   &I
\end{pmatrix}
\ &= \
\begin{pmatrix}
I-A& 0&0\\
0  & I   &0  \\
0   & 0   &I
\end{pmatrix}
\end{align*}
\end{proof}

The next proposition is used in the
proof of Lemma \ref{zetalemma}.

\begin{proposition} \label{sizematters}
Suppose $A$ is an $n\times n$ matrix in
$\nzcgpp$
and $d$ is the maximum degree of an entry of
$A$.  Then there is a  matrix $A^{\diamond}$ over $\Z_+G$ such that
the following hold.
\begin{enumerate}
\item
$I-tA^{\diamond}$ is positive equivalent to $I-A$.
\item
$A^{\diamond}$ is $m\times m$ with $m\leq nd$.
\end{enumerate}
If $I-A$ is not positive equivalent to $I$, then
in addition $A^{\diamond}$ can be chosen to be nondegenerate.
\end{proposition}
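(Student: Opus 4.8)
The plan is to clear all constant terms of $A$ by positive equivalences, reducing to a matrix over $t\Z_+G[t]$ of the same size and no larger degree, and then to apply the companion construction of Lemma \ref{sharplemma}. Write $A=A^{(0)}+A_{+}$, where $A^{(0)}$ is the matrix of constant terms (off-diagonal, by the NZC hypothesis on diagonals) over $\Z_+G$ and $A_{+}$ has all entries in $t\Z_+G[t]$. For $m\ge 1$ the constant term of $A^{m}$ is $(A^{(0)})^{m}$, so $A\in\nzc(\Z_+G[t])$ forces $(A^{(0)})^{m}$ to have zero diagonal for every $m$; over $\Z_+G$ this means the graph of $A^{(0)}$ has no cycles, i.e. $A^{(0)}$ is nilpotent. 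After permuting indices I may assume $A^{(0)}$ is strictly upper triangular.

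First I would clear the constant terms one entry at a time, in order of increasing distance $j-i$ above the diagonal. To clear a surviving constant term $c$ (the constant term of the $(i,j)$ entry, $i<j$), left-multiply $I-A$ by the basic elementary matrix $I+cE_{ij}$, that is, add $c$ times row $j$ to row $i$. Since the $(j,j)$ entry of $A$ has zero constant term, the diagonal $1$ of $I-A$ contributes exactly the $+c$ cancelling the constant part of the $(i,j)$ entry; and since $A^{(0)}$ is strictly upper triangular, the only constant terms disturbed lie in positions $(i,k)$ with $k>j$. Thus the constant part remains strictly upper triangular at every stage, so each intermediate $I-(\cdot)$ is again in $\nzc(\Z_+G[t])$ and each move is a legitimate basic positive equivalence over $\Z_+G\subset\Z_+G[t]$; moreover clearing distance $\delta$ never recreates a constant term at distance $\le\delta$. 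Running $\delta=1,\dots,n-1$ therefore terminates with $I-B$ positive equivalent to $I-A$, where $B$ is $n\times n$ over $t\Z_+G[t]$. As each multiplier $c$ has degree $0$, no step raises the degree, so $\deg B\le d$.

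If $B=0$ then $I-A$ is positive equivalent to $I$ and I may take $A^{\diamond}=(0)$; otherwise put $k=\deg B\ge 1$. By Lemma \ref{sharplemma}, $I-B$ is positive equivalent to $I-t\Bs$, and $\Bs$ is a $k\times k$ array of $n\times n$ blocks over $\Z_+G$, hence of size $nk\le nd$. Composing equivalences, $I-t\Bs$ is positive equivalent to $I-A$, giving conclusions (1) and (2) with $A^{\diamond}=\Bs$. For the final clause, replace $\Bs$ by its core: a square matrix over $\Z_+G$ is SSE-$\Z_+G$ to its core, so by Theorem \ref{sseasposeq} the matrix $I-t(\mathrm{core}\,\Bs)$ is positive equivalent to $I-t\Bs$, hence to $I-A$, and the core has size at most that of $\Bs$. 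Since the core of $\Bs$ equals $(0)$ exactly when $I-A$ is positive equivalent to $I$, the hypothesis that $I-A$ is not positive equivalent to $I$ guarantees the core is a genuine nondegenerate matrix over $\Z_+G$; taking $A^{\diamond}$ to be this core finishes the argument.

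The main obstacle is Step 1: the elimination order must be chosen so that simultaneously each basic move stays inside $\nzc(\Z_+G[t])$ (which is what makes it a positive equivalence), the process terminates instead of chasing constant terms outward indefinitely, and the degree does not grow, since it is $\deg B$ that controls the final size through the companion construction. The strict upper triangular form of $A^{(0)}$ together with the increasing-distance order is exactly what secures all three at once.
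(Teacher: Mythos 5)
Your argument is correct in substance and follows essentially the same route as the paper's proof: clear the constant terms of $A$ by basic elementary positive equivalences to reach a matrix $B$ over $t\Z_+G[t]$ of degree at most $d$, invoke Lemma \ref{sharplemma} to pass to $I-t\Bs$ of size $nd_B\leq nd$, and take the core for nondegeneracy. The only real difference is bookkeeping for the clearing step: the paper clears all constant terms of a fixed row at once and proves termination with the potential $M_i(A)$ (the largest $k$ such that some entry of row $i$ of $A^k$ has nonzero constant term), whereas you clear entry-by-entry using the nilpotency of the constant-term matrix $A^{(0)}$ and an ordering by distance above the diagonal. Your version has the merit of making explicit why every intermediate matrix stays in $\nzc(\Z_+G[t])$ (the constant part remains strictly triangular), a point the paper's sketch leaves implicit.

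One step does need repair: \lq\lq after permuting indices I may assume $A^{(0)}$ is strictly upper triangular\rq\rq\ literally replaces $A$ by $PAP^{-1}$, so as written you conclude that $I-tA^{\diamond}$ is positive equivalent to $I-PAP^{-1}$, not to $I-A$. Positive equivalence is not self-evidently invariant under permutation conjugation: by Definition \ref{defnposeq} every intermediate matrix in the chain must lie in $\mathcal M$, and no such chain realizing conjugation by $P$ is exhibited. The cleanest fix is to drop the permutation entirely: your clearing argument never uses the numerical values of the indices, only the acyclic structure of the digraph of $A^{(0)}$, so fix a linear extension $\prec$ of that digraph, measure \lq\lq distance\rq\rq\ in the order $\prec$, and run the identical induction with the invariant that the constant part is supported on pairs $(i,j)$ with $i\prec j$. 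Alternatively, one can transfer the conclusion back to $A$ using results already at hand: $A$ and $PAP^{-1}$ are elementary SSE over $\Z_+G[t]$ (take $R=AP^{-1}$, $S=P$, both over $\Z_+G[t]$), the last claim of Theorem \ref{sseasposeq} supplies some $A^{\natural}$ over $\Z_+G$ with $I-tA^{\natural}$ positive equivalent to $I-A$, Proposition \ref{hms} then gives that $A^{\natural}$ and your $A^{\diamond}$ are SSE over $\Z_+G$, and Theorem \ref{sseasposeq} converts this back into a positive equivalence between $I-tA^{\diamond}$ and $I-tA^{\natural}$, hence with $I-A$. With either repair the proof is complete.
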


\begin{proof}
First suppose $A\in \nzcgpp$. We claim $I-A$ is positive equivalent
to a matrix over $t\Z_+G[t]$.
This is stated for $\ZG=\Z$
in \cite[Prop. 4.3]{BW04}, but the argument is for our purposes
quite indirect, so we will sketch a proof.  Suppose for a row $i$,
the indices $j=j_1, \dots , j_t$ are those such that $A(i,j)$
has nonzero constant term, $c_{i,j} \neq 0$. For $1\leq s \leq t$,
let $E_s$
be the $n\times n$
basic elementary matrix with $E(i,j_s)=c_{i,j_s}$. Then
there is a positive equivalence from $I-A$ to
$E_1E_2\cdots E_t(I-A) := I-B_1$. $A$ and $B_1$ are equal outside row
$i$. Now, if $M_i(A)$ denotes the maximum integer $k$
such that an entry of row $i$ of $A^k$ has nonzero constant
term, then $M_i(B_1) \leq M_i(A) -1$. Thus by iterating this process,
we can produce an $n\times n$ matrix $B$ over $t\Z_+[t]$  such that
$I-B$ is positive equivalent to $I-A$.
Let $d_B$ be the maximum degree of an entry of $B$;
then
 $d_B \leq d$.

Now by Lemma \ref{sharplemma}, the matrix $I-t\Bs$ is positive equivalent to
$I-B$ and hence to $I-A$, with size $nd_B \leq nd$.
Set
 $A^{\diamond}=\Bs$.
For the nondegeneracy condition,  let
 $A^{\diamond}$ be the core of $\Bs$.
\end{proof}

The next proposition is used in the proof of
Theorem \ref{sseembed}.

\begin{proposition} \label{hms}
 Suppose $I-A,I-B$  are matrices
in $\nzcgpp$
such that $A$ and $B$ are SSE over  $\Z_+G[t]$.
Suppose $A',B'$ are matrices over $\Z_+G$
such that $I-tA'$ and $I-tB'$ are positive
 equivalent respectively to $I-A$ and $I-B$.
Then $A'$ and $B'$ are SSE over $\Z_+G$.
\end{proposition}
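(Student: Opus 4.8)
The plan is to reduce the statement, via Theorem~\ref{sseasposeq}, to an assertion purely about positive equivalence, and then to prove that assertion by a single block computation. Since $A,B$ lie in $\nzcgpp$ and $A',B'$ are matrices over $\Z_+G$ with $I-tA'$ and $I-tB'$ positive equivalent to $I-A$ and $I-B$, the matrices $A',B'$ are exactly admissible choices of the $A^{\diamond},B^{\diamond}$ appearing in Theorem~\ref{sseasposeq}. That theorem therefore gives
\[
A' \text{ and } B' \text{ are SSE over } \Z_+G
\iff
I-A \text{ and } I-B \text{ are positive equivalent,}
\]
so it suffices to show that $A$ and $B$ being SSE over $\Z_+G[t]$ forces $I-A$ and $I-B$ to be positive equivalent. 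By transitivity of positive equivalence and of SSE it is enough to treat a single elementary equivalence: I would assume $A=RS$ and $B=SR$ for matrices $R,S$ over $\Z_+G[t]$ and prove that $I-RS$ is positive equivalent to $I-SR$.

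Before the block computation I would verify that every matrix occurring in the chain lies in $\nzcgpp$, so that the relevant $I-(\cdot)$ genuinely belong to the set $\mathcal M$ on which positive equivalence is defined. The point is that the NZC property is preserved under an elementary equivalence $RS\mapsto SR$ over $\Z_+G[t]$. Indeed all entries are $G$-positive, so for each $n\geq 1$ the constant ($t^{0}$) coefficient of the $i$th diagonal entry of $(SR)^{n}$ is a nonnegative element of $\Z_+G$, and summing over $i$ gives the constant coefficient of $\trace((SR)^{n})$. Applying $\kappa$ and using the cyclic invariance $\kappa(\trace(PQ))=\kappa(\trace(QP))$ (the elementary fact $\kappa(ab)=\kappa(ba)$ underlying Proposition~\ref{finitetracedata}), we obtain $\kappa(\trace((SR)^{n}))=\kappa(\trace((RS)^{n}))$; since $RS\in\nzcgpp$ the latter has constant coefficient $0$, so the image under $\kappa$ of the constant coefficient of $\trace((SR)^{n})$ vanishes. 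As this coefficient is a sum of nonnegative terms of $\Z_+G$, each diagonal entry of $(SR)^{n}$ must have zero constant coefficient, whence $SR\in\nzcgpp$. By induction the entire SSE chain stays inside $\nzcgpp$.

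For the single elementary step I would use $C=\left(\begin{smallmatrix}0&R\\ S&0\end{smallmatrix}\right)$ over $\Z_+G[t]$. Its odd powers have vanishing diagonal and its even powers have diagonal blocks equal to powers of $RS$ and $SR$; as both $RS$ and $SR$ now lie in $\nzcgpp$, we get $C\in\nzcgpp$, so $I-C\in\mathcal M$. Left multiplying $I-C=\left(\begin{smallmatrix}I&-R\\ -S&I\end{smallmatrix}\right)$ by $\left(\begin{smallmatrix}I&R\\ 0&I\end{smallmatrix}\right)$ and right multiplying by $\left(\begin{smallmatrix}I&0\\ S&I\end{smallmatrix}\right)$ clears the off-diagonal blocks and yields $I-(RS\oplus 0)$; multiplying instead on the left by $\left(\begin{smallmatrix}I&0\\ S&I\end{smallmatrix}\right)$ and on the right by $\left(\begin{smallmatrix}I&R\\ 0&I\end{smallmatrix}\right)$ yields $I-(0\oplus SR)$. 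All elementary matrices used have entries in $\Z_+G[t]$, the intermediate matrices $\left(\begin{smallmatrix}I-RS&0\\ -S&I\end{smallmatrix}\right)$ and $\left(\begin{smallmatrix}I&-R\\ 0&I-SR\end{smallmatrix}\right)$ are again in $\mathcal M$, and these block operations factor into basic positive equivalences exactly as in the proof of Lemma~\ref{sharplemma} (cf.\ \cite{BW04}). Since $I-(RS\oplus 0)$ and $I-(0\oplus SR)$ are stabilizations of $I-A$ and $I-B$, I conclude that $I-A$ is positive equivalent to $I-B$, which is all that remained.

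The step I expect to be the main obstacle is precisely the legality of positive equivalence at every stage: one must know that NZC persists along the SSE chain and through the intermediate matrices of the block clearing, since otherwise the operations leave $\mathcal M$ and positive equivalence is undefined. The trace-and-positivity argument is the crux, and the essential input is the $G$-positivity of the entries (not merely membership in $\ZG[t]$), which is exactly what forces the individual diagonal constant coefficients to vanish once their $\kappa$-image does.
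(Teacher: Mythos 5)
Your overall strategy coincides with the paper's: reduce via Theorem \ref{sseasposeq} to showing that $I-A$ and $I-B$ are positive equivalent, and reduce by transitivity to a single elementary step $A=RS$, $B=SR$ with $R,S$ over $\Z_+G[t]$. Two parts of your argument are correct and worth noting. First, your $\kappa$-trace argument that the NZC property passes from $RS$ to $SR$ is sound (positivity of the entries together with $\kappa(\trace(PQ))=\kappa(\trace(QP))$ forces each diagonal constant term of $(SR)^n$ to vanish); the paper never states this fact, although it is tacitly needed both for the reduction to a single elementary step and because the intermediate matrices in its own computation, such as $I-\left(\begin{smallmatrix}0&R\\S&0\end{smallmatrix}\right)$, are NZC only because $SR$ is. Second, your block computation around $I-C$ with $C=\left(\begin{smallmatrix}0&R\\S&0\end{smallmatrix}\right)$ is correct and is essentially the paper's first displayed equation (the polynomial strong shift equivalence equations of \cite{BW04}) organized symmetrically through $I-C$: it yields a positive equivalence from $I-(RS\oplus 0)$ to $I-(0\oplus SR)$.

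The gap is your final sentence. The matrix $I-(0\oplus SR)$ is \emph{not} a stabilization of $I-B=I-SR$: in the definition of elementary equivalence in $\mathcal M$ underlying Definition \ref{defnposeq}, stabilization passes from $I-B$ to $I-(B\oplus 0_k)$, with the zero block adjoined in the lower right corner, whereas your matrix has the zero block in the upper left. It is only a permutation conjugate of the stabilization $I-(SR\oplus 0)$, and permutation matrices are not elementary, so you still owe a positive equivalence between $I-(0\oplus SR)$ and $I-(SR\oplus 0)$ in which every basic step stays inside $\mathcal M$. This block swap is not automatic: for instance, starting the standard factorization of a block swap by left multiplying $I-(0\oplus SR)$ by $\left(\begin{smallmatrix}I&I\\0&I\end{smallmatrix}\right)$ produces $I-\left(\begin{smallmatrix}0&SR-I\\0&SR\end{smallmatrix}\right)$, whose entry $SR-I$ has negative constant terms on its diagonal, so one has left $\mathcal M$. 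One must exhibit a specific ordered sequence of multiplications whose intermediates are, e.g., $I-\left(\begin{smallmatrix}0&B\\0&B\end{smallmatrix}\right)$, $I-\left(\begin{smallmatrix}0&B\\I&0\end{smallmatrix}\right)$, $I-\left(\begin{smallmatrix}B&B\\0&0\end{smallmatrix}\right)$, and finally $I-(B\oplus 0)$, with $B=SR$. Supplying exactly this swap is the purpose of the second displayed equation in the paper's proof; it is the half of the argument your proposal omits, and without it your chain of positive equivalences connects $I-A$ to $I-(0\oplus SR)$ but not to $I-B$.
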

\begin{proof}
It suffices to prove the proposition in the case that there
are matrices $R,S$ over $\Z_+G[t]$ such that
$A=RS$ and $B=SR$.
By Theorem \ref{sseasposeq},
it suffices to show that
$I-A$ is positive  equivalent to $I-B$.
To see  this, using the ``polynomial strong shift equivalence equations''
of \cite[Sec.4]{BW04}, we
multiply by matrices
below in the order given by subscripts.
 Each multiplication gives a positive equivalence.
\begin{align*}
\begin{pmatrix}I&0\\ S&I \end{pmatrix}_4
\begin{pmatrix}I& -R\\ 0&I \end{pmatrix}_2
\begin{pmatrix}I-RS& 0\\ 0&I \end{pmatrix}
\begin{pmatrix}I&0\\- S&I \end{pmatrix}_1
\begin{pmatrix}I& R\\ 0&I \end{pmatrix}_3
&=
\begin{pmatrix}I& 0\\ 0&I-SR \end{pmatrix}
\\
\begin{pmatrix}I&B\\ 0&I \end{pmatrix}_4
\begin{pmatrix}I& 0\\ -I&I \end{pmatrix}_2
\begin{pmatrix}I& 0\\ 0&I-B \end{pmatrix}
\begin{pmatrix}I&-B\\0&I \end{pmatrix}_1
\begin{pmatrix}I& I\\ 0&I \end{pmatrix}_3
&=
\begin{pmatrix}I-B& 0\\ 0&I \end{pmatrix}
\end{align*}
\end{proof}

{\bf Infinite matrices.}

Let $R$ be a ring. $\el_n(R)$ is the group of $n\times n$ matrices
which are products of basic elementary matrices over $R$.
$\gl_n(R)$ is the group of $n\times n$ matrices invertible
over $R$.
For $R$ commutative, $\SL_n(R)$ is  the subgroup of
matrices in $\gl_n(R)$ with determinant 1.
The group $\GL(R)$ is the direct limit
group defined by the maps $\GL_n(R)\to \GL_{n+1}(R)$,
$U\mapsto U\oplus 1$.
$\El (R)$ and (for $R$ commutative) $\SL (R)$ are the subgroups of $\GL (R)$
defined as direct limits of the groups $\El_n(R)$
and $\SL_n(R)$.
We define
finite square matrices $I-A,I-B$ to be
$\El(R)$ equivalent if
if there exist $j,k,n$ and matrices $U,V$ in
$\El_n(R)$ such that $U(I-(A\oplus 0_j))V=
I-(B\oplus 0_k)$.  $\GL(R)$ equivalence and
$\SL(R)$ equivalence are defined in the same way.

For a finite square matrix $M$, let $M_{\infty}$ denote
the infinite matrix which has upper left corner $M$
and agrees with $I$ in all other entries. The elements of
$\GL(R)$ are naturally identified with the matrices
$U_{\infty} $ such that $U $ is invertible.
Similarly for $\SL (R)$ and $\El(R)$.

An equivalence $U(I-A)V$ with $U$ and $V$ in $\GL_n(R)$
produces
an equivalence $U_{\infty}(I-A)_{\infty}V_{\infty}$ by
matrices $U,V$ in $\GL(R)$. Likewise for $\El(R)$ and $\SL(R)$.
Basic elementary equivalence, ZNC and positive equivalence can
be defined for these infinite matrices in the
obvious way, such that  finite
square matrices $I-A$ and $I-B$ are positive equivalent
if and only if $(I-A)_{\infty}$ and $(I-B)_{\infty}$ are positive
equivalent.


{\bf Algebraic invariants via polynomial matrices.}

In this subsection we look at the earlier algebraic invariants
in terms of the polynomial matrix presentations.

\begin{definition} \label{elrequivdefn}
For a ring $ R$ we say square  matrices $M,N$ are
$\El (R)$ equivalent if there are positive integers $j,k,n$
and matrices $U,V$ in $\El_n(R)$ such that
$U(M\oplus I_j)V= N\oplus I_k$. $\GL (R) $ equivalence
and $\SL (R)$ equivalence are defined in the same way.
\end{definition}

Theorem \ref{sseaseq} is an easy corollary of
the main result of \cite{BoSc1}.
\begin{theorem}\cite[Corollary 6.6]{BoSc1} \label{sseaseq}
Suppose $R$ is a ring. Suppose
$I-A$ and $I-B$ are matrices over
$R[t]$;
$A',B'$ are square matrices over
$R$;  and
$I-A$ and $I-B$
are respectively
$\El(R[t])$ equivalent to
$I-tA'$ and $I-tB'$. Then
the following are equivalent.
\begin{enumerate}
\item
$A'$ and $B'$ are SSE over $R$.
\item
$I-tA$ and $I-tB$ are $\El(R[t])$ equivalent.
\end{enumerate}
\end{theorem}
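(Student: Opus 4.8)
The plan is to reduce the general statement to the special case $A = tA'$, $B = tB'$ --- which is exactly the result quoted from \cite{BoSc1} --- and then to transport that case across the given $\El(R[t])$ equivalences. The observation driving everything is that $\El(R[t])$ equivalence, in the sense of Definition \ref{elrequivdefn}, is an equivalence relation on the stabilization classes of square matrices over $R[t]$, so that being $\El(R[t])$ equivalent to a common matrix can be freely chained.

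First I would check that $\El(R[t])$ equivalence is genuinely an equivalence relation. Reflexivity is immediate; symmetry holds because each $\El_n(R[t])$ is a group, so if $U(M\oplus I_j)V = N\oplus I_k$ with $U,V\in \El_n(R[t])$, then $U^{-1}(N\oplus I_k)V^{-1} = M\oplus I_j$ with $U^{-1},V^{-1}\in \El_n(R[t])$. Transitivity is the only place requiring attention: given $\El(R[t])$ equivalences from $M$ to $N$ and from $N$ to $P$, one pads both with identity blocks $\oplus I_j$ and raises the ambient size $n$ so that the two equivalences act on matrices of the same size, and then composes the resulting elementary products. This is routine bookkeeping, since appending identity blocks and enlarging $n$ never leaves the group $\El$.

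Next, the hypothesis supplies $\El(R[t])$ equivalences $I-A \sim I-tA'$ and $I-B \sim I-tB'$. By symmetry and transitivity, $I-A$ and $I-B$ are $\El(R[t])$ equivalent if and only if $I-tA'$ and $I-tB'$ are $\El(R[t])$ equivalent. It then remains only to treat the special case: $A'$ and $B'$ are SSE over $R$ if and only if $I-tA'$ and $I-tB'$ are $\El(R[t])$ equivalent. This is precisely the assertion of \cite[Corollary 6.6]{BoSc1} applied directly to $A'$ and $B'$, so I would invoke it as given. Chaining the two biconditionals yields the equivalence of $(1)$ and $(2)$.

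The main obstacle --- to the extent there is one, since all the substance lives in \cite{BoSc1} --- is purely organizational: verifying transitivity of the stabilized $\El(R[t])$ equivalence with its varying sizes $j,k,n$, so that the chain of equivalences is legitimate, and confirming that the cited corollary applies verbatim to the displayed special form $I-tA'$. No new dynamical or $\K$-theoretic input is needed beyond the quoted result; this is exactly why the statement is \emph{an easy corollary}.
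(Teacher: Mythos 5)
Your proof is correct and is essentially the paper's own approach: the paper gives no written argument beyond citing \cite[Corollary 6.6]{BoSc1} and remarking that the theorem ``is an easy corollary of the main result of \cite{BoSc1}'', and your reduction via transitivity of stabilized $\El(R[t])$ equivalence to the special case $A=tA'$, $B=tB'$ is precisely that easy-corollary argument. Note also that you (correctly) read condition (2) as asserting equivalence of $I-A$ and $I-B$; the ``$I-tA$'' in the paper's statement is evidently a slip, as the parallel Theorem \ref{sseasposeq} and the use of this result in Proposition \ref{hms} confirm.
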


If $A$ is $n\times n$ over the group ring $\ZG[t]$,
then  matrix multiplication defines
$(I-A) : (\ZG [t])^n \to  (\ZG [t])^n$ and thereby
the $\ZG[t]$ module $\cok (I-A)$.
(The isomorphism class of the module depends
in general on whether
one chooses multiplication of row vectors or column vectors.)
\begin{proposition}\cite[Theorem 5.1]{BoSc1}\label{3waystose}\footnote{See \cite{BoSc1} for attributions; especially,
$(2)\iff (3)$ is due to Fitting \cite{Fitting1936}.}
Suppose $A$ and  $B$
are square matrices over a ring $R$.
Then the following are equivalent.
\begin{enumerate}
\item
$A$ and $B$ are SE over $R$.
\item
The
 $R [t] $ modules
 cokernel $\cok (I-tA)$
and $\cok(I-tB)$ are isomorphic.
\item
$I-tA$ and $I-tB$ are $\GL (R[t])$
equivalent.
\end{enumerate}
\end{proposition}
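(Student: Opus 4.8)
The plan is to obtain $(2)\iff(3)$ as an instance of Fitting's theorem on matrices presenting isomorphic modules (as recorded in the footnote): over any ring, two matrices present isomorphic cokernels if and only if, after adjoining identity blocks, they become equivalent under multiplication by invertible matrices; applied to $I-tA$ and $I-tB$ over $R[t]$, this is exactly the assertion $(2)\iff(3)$ with $\GL(R[t])$ equivalence as in Definition \ref{elrequivdefn}. The real content is therefore $(1)\iff(2)$, and the organizing idea is to describe the $R[t]$-module $M_A:=\cok(I-tA)$ concretely. The defining relations of $M_A$ force $tA$ to act as the identity, so multiplication by $t$ is invertible on $M_A$, its inverse being the operator induced by the matrix $A$; equivalently $M_A$ is the colimit $\varinjlim\bigl(R^n\xrightarrow{A}R^n\xrightarrow{A}\cdots\bigr)$, on which $A$ acts as the colimit automorphism $t^{-1}$. (Whether $A$ or its transpose appears here depends on the row/column convention flagged in the statement, but this does not affect the argument, since shift equivalence is symmetric under transpose.)

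For $(1)\Rightarrow(2)$, suppose $A$ and $B$ are shift equivalent over $R$ with lag $\ell$ via matrices $P,Q$ over $R$ satisfying $AP=PB$, $QA=BQ$, $PQ=A^\ell$, and $QP=B^\ell$. Extending $P$ over $R[t]$, the identity $(I-tA)P=P-tAP=P-tPB=P(I-tB)$ shows that $P$ carries $\operatorname{im}(I-tB)$ into $\operatorname{im}(I-tA)$, hence descends to $\bar P\colon M_B\to M_A$; symmetrically $Q$ descends to $\bar Q\colon M_A\to M_B$. The composite $\bar P\bar Q$ is induced by $PQ=A^\ell$, which acts on $M_A$ as $(t^{-1})^\ell=t^{-\ell}$, an isomorphism; likewise $\bar Q\bar P$ is multiplication by $B^\ell=t^{-\ell}$ on $M_B$. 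Since both composites are isomorphisms, $\bar P$ and $\bar Q$ are mutually inverse, giving the required $R[t]$-isomorphism $M_A\cong M_B$.

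For $(2)\Rightarrow(1)$ I would run this in reverse using the colimit description. Given an $R[t]$-isomorphism $\phi\colon M_A\to M_B$, represent the images $\phi(\bar e_1),\dots,\phi(\bar e_n)$ of the standard generators by vectors lying at a common finite stage of $\varinjlim(R^m,B)$; their coordinates assemble into a matrix $Q$ over $R$, and the same procedure applied to $\phi^{-1}$ yields a matrix $P$ over $R$. Because $\phi$ is $R[t]$-linear it commutes with $t$, hence with $t^{-1}$, so $\phi\circ A=B\circ\phi$; read at a finite stage this becomes $QA=BQ$, and symmetrically $AP=PB$. The relations $\phi^{-1}\phi=\mathrm{id}$ and $\phi\phi^{-1}=\mathrm{id}$ become $PQ=A^\ell$ and $QP=B^\ell$ once $\ell$ is taken large enough to absorb the stage shifts. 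These are precisely the shift equivalence equations. The main obstacle is exactly this bookkeeping: an equality in the colimit holds only after applying sufficiently many copies of $A$ or $B$, so the delicate point is to choose a single lag $\ell$ that synchronizes all four relations simultaneously, so that they hold on the nose as matrix identities over $R$ rather than merely in the colimit.
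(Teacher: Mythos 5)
A preliminary point: the paper never proves Proposition \ref{3waystose} at all. It is imported verbatim from \cite{BoSc1} (Theorem 5.1 there), and the only internal commentary is the footnote attributing $(2)\iff(3)$ to Fitting. So there is no in-paper argument to compare you against; your proposal has to be judged against the standard proof that the citation points to, and on that score your route is the intended one and is essentially correct. Outsourcing $(2)\iff(3)$ to Fitting's theorem is exactly what the footnote licenses, and your structural description of $M_A=\cok(I-tA)$ is right: the relation $v\equiv tAv$ forces $tA=At=\mathrm{id}$ on the cokernel, so $t$ is invertible with inverse induced by $A$, and $vt^k\mapsto [v,k]$ identifies $M_A$ with $\varinjlim(R^n,A)$. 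Your $(1)\Rightarrow(2)$ argument is complete: $(I-tA)P=P(I-tB)$ does descend $P$ to $\bar P\colon M_B\to M_A$, and $\bar P\bar Q$, $\bar Q\bar P$ are induced by $A^\ell$, $B^\ell$, which act as the automorphisms $t^{-\ell}$. One phrasing slip there: $\bar P$ and $\bar Q$ are \emph{not} mutually inverse (their composites are $t^{-\ell}$, not the identity); the correct conclusion is that since both composites are automorphisms, $\bar P$ is injective and surjective, hence an isomorphism, which is all you need.

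The direction $(2)\Rightarrow(1)$ is the one place where you have a sketch rather than a proof. Reading $\phi\circ A=B\circ\phi$ ``at a finite stage'' does \emph{not} yield $QA=BQ$; an equality in the colimit only gives $B^NQA=B^{N+1}Q$ for some $N\geq 0$. The standard repair, which your write-up gestures at but does not execute, is to replace $Q$ by $B^NQ$ (and $P$ by $A^MP$), after which the intertwining relations hold on the nose; the identities $\phi^{-1}\phi=\mathrm{id}$ and $\phi\phi^{-1}=\mathrm{id}$ then give $A^{K}PQ=A^{K+c}$ and $B^{L}QP=B^{L+c}$ (where $c$ is the total stage shift), and one checks that with $N,M$ chosen large enough the modified pair satisfies $\tilde P\tilde Q=A^{\ell}$ and $\tilde Q\tilde P=B^{\ell}$ with the \emph{same} $\ell=N+M+c$, using $\tilde P B^N=A^N\tilde P$ and $\tilde Q A^M=B^M\tilde Q$. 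You correctly name this synchronization as the main obstacle and correctly assert it can be overcome, so the strategy is sound; but since the lag-matching is the only nontrivial content of this implication, a complete proof must display these replacements rather than call them bookkeeping.
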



Lastly,
we consider the algebraic invariants for the periodic data.
Proposition \ref{3waystose}
(via condition (3)) shows that
$\det (I-tA)$ is invariant under SE-$R$ for any commutative ring $R$
(e.g. $\ZG$ for $G$ abelian).
For any ring $R$,
$R[[t]]$ denotes the ring of formal power series with
coefficients in $R$, and
the {\it generalized characteristic polynomial
$\textnormal{ch}(A)$}
\cite{Pajitnov,PajitnovRanicki2000,SheihamCohn,SheihamWhitehead})
of a square matrix $A$ over $R$ is the
element of $K_1(R[[t]])$ containing  $I-tA$.
Motivation for and
a characterization of $\textnormal{ch}(A)$
are in \cite{SheihamCohn,SheihamWhitehead}.
If $R$ is commutative,
then
$\det(I-tA)$ is a complete
invariant for $\textnormal{ch}(A)$.

Recall the definitions \eqref{tsdefn} and \eqref{ctsdefn}
   for
   $\mathcal T_A$ and $\kappa \mathcal T_A$.
Given a ring $R$,
let $C$ denote
the additive subgroup
(not the ideal)
of $R$ generated by the set $\{ab-ba: a\in R, b\in R\}$.
Let $\gamma : R\to R/C$ denote the corresponding
epimorphism of additive groups.
Let $\mathcal T_A/C $ denote $\sum_{n=1}^{\infty}
\gamma(\tr A^n) t^n$. Following Sheiham
\cite[p.19]{SheihamCohn}, for a square matrix $A$ over $R$
define
$
\chi : A \mapsto \mathcal T_A/C \ .
$
\begin{proposition}\label{kappatau} Suppose $G$ is a
group and $A,B$ are square matrices over
$\ZG$. Then
\begin{equation}
\mathcal T_A/C = \mathcal T_B/C
   \  \iff
\  \kappa \mathcal T_A = \kappa \mathcal T_B \ .
\end{equation}
If $I-tA$ and $I-tB$ are
$El (\ZG [t] )$ equivalent,
or even just
$El (\ZG [[t]] )$ equivalent, then
$\kappa \mathcal T_A = \kappa \mathcal T_B$.
\end{proposition}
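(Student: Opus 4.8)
The plan is to prove the two assertions separately. For the first equivalence I would identify the additive quotient $\ZG/C$ explicitly. Since the generators $ab-ba$ of $C$ are additive in each variable, $C$ is spanned (as an additive group) by the elements $gh-hg$ with $g,h\in G$; writing $u=gh$ gives $gh-hg=u-g^{-1}ug$, so $C$ is exactly the span of all differences $u-vuv^{-1}$ of conjugate group elements. The kernel of $\kappa$ is visibly spanned by these same differences, so $C=\ker(\kappa)$, and since $\kappa$ surjects onto $\Conj G$ we obtain an isomorphism $\ZG/C\xrightarrow{\sim}\Conj G$ sending $\gamma(x)$ to $\kappa(x)$. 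Applying this isomorphism to each coefficient of the two power series immediately yields $\mathcal T_A/C=\mathcal T_B/C \iff \kappa \mathcal T_A=\kappa \mathcal T_B$.

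For the second assertion, the inclusion $\ZG[t]\hookrightarrow\ZG[[t]]$ shows that $\El(\ZG[t])$-equivalence implies $\El(\ZG[[t]])$-equivalence, so it suffices to treat the weaker hypothesis. The matrices $I-tA$ and $I-tB$ are invertible over $\ZG[[t]]$, with $(I-tA)^{-1}=\sum_{n\ge 0}t^nA^n$; hence an $\El(\ZG[[t]])$-equivalence between them forces $[I-tA]=[I-tB]$ in $K_1(\ZG[[t]])$, that is, $\textnormal{ch}(A)=\textnormal{ch}(B)$. By the first part it therefore suffices to show that $\chi:A\mapsto\mathcal T_A/C$ depends only on the class of $I-tA$ in $K_1(\ZG[[t]])$, which is Sheiham's theorem \cite{SheihamCohn}.

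The mechanism behind this invariance, which I would sketch to keep the argument self-contained, is the trace--logarithm. Over $(\ZG\otimes\mathbb{Q})[[t]]$ one has $\mathcal T_A=-t\frac{d}{dt}\tr\log(I-tA)$, so it suffices to control $\widetilde\chi(M):=\gamma(\tr\log M)$ for $M$ in the congruence subgroup $\Gamma=\ker(\GL(\ZG[[t]])\to\GL(\ZG))$ (the a priori $\mathbb{Q}$-coefficient identity descends to $\Conj G$ since that group is free abelian). Two computations drive everything: $\tr(XY-YX)$ is a sum of ring commutators $X_{ik}Y_{ki}-Y_{ki}X_{ik}$, hence lies in $C[[t]]$, so that $\widetilde\chi$ is conjugation invariant (cyclicity of the trace modulo $C$) and, by the Baker--Campbell--Hausdorff formula, a homomorphism on $\Gamma$ (every higher term is a Lie bracket, hence traceless modulo $C$); and $\widetilde\chi$ annihilates every elementary matrix lying in $\Gamma$, since for $i\neq j$ one has $\log(I+re_{ij})=re_{ij}$, which has zero diagonal. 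If $I-tA$ and $I-tB$ share a class in $K_1(\ZG[[t]])$, then after stabilizing $(I-tA)(I-tB)^{-1}$ lies in $\Gamma\cap\El(\ZG[[t]])$, and by the homomorphism property the equality $\widetilde\chi(I-tA)=\widetilde\chi(I-tB)$ is equivalent to the vanishing of $\widetilde\chi$ on $\Gamma\cap\El(\ZG[[t]])$. This last point is the main obstacle and the true content of the statement: an element of $\El(\ZG[[t]])$ that happens to lie in $\Gamma$ need not be a product of elementary matrices already in $\Gamma$ (the discrepancy is governed by a $K_2(\ZG)$-type term), so its annihilation by $\widetilde\chi$ is not formal --- it is precisely the well-definedness of $\chi$ on $K_1(\ZG[[t]])$ that we quote from Sheiham.
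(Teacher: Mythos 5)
Your proposal is correct and follows essentially the same route as the paper: both arguments reduce $\El(\ZG[t])$ equivalence to $\El(\ZG[[t]])$ equivalence, identify the latter with equality of $\textnormal{ch}(A)$ and $\textnormal{ch}(B)$ in $K_1(\ZG[[t]])$, and then quote Sheiham's result that $\chi : A \mapsto \mathcal T_A/C$ factors through $\textnormal{ch}(A)$. Your explicit identification $C=\ker(\kappa)$ and your trace--logarithm sketch merely flesh out, respectively, the first claim (which the paper calls straightforward) and the cited Sheiham factorization, so the logical content is the same.
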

\begin{proof}
  The proof of the first claim is straightforward.
  For the second claim, note that
  for any ring $R$,
 $\chi$ factors through   $\textnormal{ch}(A)$,
as  pointed out by Sheiham
\cite[Remark 2.9]{SheihamCohn}.
If $I-tA$ and $I-tB$ are $\El (R[t])$ equivalent,
then  they are $\El (R[[t]])$ equivalent, so
$\textnormal{ch}(A)=\textnormal{ch}(B)$. In the
case $R=\ZG$, this means
 $\mathcal T_A/C  =\mathcal T_B/C $.
  \end{proof}
With Theorem \ref{sseaseq}, Proposition \ref{kappatau} gives an alternate
proof that $\mathcal T_A=\mathcal T_B$ when $A$ and $B$
are SSE over $R$.
For $G$ a nonabelian  group, we  do not know
if $\kappa \mathcal T_A$
determines $\textnormal{ch}(A)$.

\section{Parry's question and SE-$\ZG$} \label{sec:pqa}

\begin{pquestion} \label{billsq}
  Suppose $G$ is a finite abelian group,
  $(X,S)$ is a mixing SFT and
  $\zeta$ is a fixed dynamical zeta function.
  Must there be only finitely many topological conjugacy classes
  of $G$ extensions of $(X,S)$, with
$\zeta_{\tau}$  constructed   from a skewing function
  $\tau$ as in \eqref{zgpres}, such that
  $\zeta_{\tau}=\zeta$?
\end{pquestion}


Slightly different versions of Parry's question were recorded in
\cite[Sec. 5.3]{B02posk}, \cite[Question 31.1]{Bopen} and
\cite[Sec. 4.4, p.331]{pst2009}. The version above is matched to our
notation. The other versions are equivalent, except that
the SFT $(X,S)$ might be assumed mixing or only irreducible.
Because $(X,S)$ is fixed, a map $X\to X$ implementing an
isomorphism of $(X,S,\tau_1 )$ and $(X,S,\tau_2 )$ would have
to be an automorphism of $(X,S)$, as in the language of
\cite[Sec. 4.4]{pst2009}.
(For work on a related problem,
in which the skewing function $f$ is H\"{o}lder
into the real numbers, see \cite{PollicottWeiss2003}.)

We will address the following
version of
Question \ref{billsq}.


\begin{question} \label{billsqrevised}
 Suppose $G$ is a nontrivial finite group and
$A$ is a  \gps   matrix over
$\Z_+G$.  Let $\mathfrak M(A)$ be
the collection of \gps   matrices
$B$ over $\Z_+G$
such that
\begin{enumerate}
\item
the matrices $\overline A$ and $\overline B$
are SSE over $\Z_+$, and
\item  the matrices
$B$ and $A$ have the same periodic data,
$P_B=P_A$ as in \eqref{perdat} \\ (if $G$ is abelian,
this means
$\det (I-tB) = \det (I-tA)$).
\end{enumerate}
Must $\mathfrak M(A)$ contain only finitely
many SSE-$\Z_+G$ classes?
\end{question}

In Question \ref{billsqrevised}, the condition that
$A$ be \gps adds the requirement that the
extension be a mixing extension -- the central
case. A negative answer to (\ref{billsqrevised})
gives a negative answer to (\ref{billsq}).
The condition that $\overline A$ and $\overline B$
are SSE over $\Z_+$ captures up to isomorphism
the extensions of Question \ref{billsq} (we can
recode them to this form) and also includes
every $(X',S', \tau ' )$ such that $(X',S')$ is topologically
conjugate to $(X,S)$ and $\tau'$ gives the correct
periodic data. This does not change the
set of isomorphism classes of extensions,
because   isomorphism classes of
$G$-extensions of $(X',S')$ pull back bijectively
under topological conjugacy to
isomorphism classes of $G$-extensions of $(X,S)$.
Also,
we have broadened Parry's question to include nonabelian groups.
We  add the condition that $G$ be nontrivial for
linguistic simplicity. If $G$ is trivial, then the answer to
(\ref{billsq}) is trivially  \lq yes\rq, so we no longer need to
exclude this case when giving a negative answer.
If $G$ is nontrivial
and $A$ is \gp , then the extension
must have positive entropy, and there is nothing more
to say about excluding a case of finitely many orbits.

Parry\footnote{Descriptions of Parry's work and motivation are
based on a review of email correspondence 2002-2006
between Boyle and Parry.}
with an unpublished example showed that nonisomorphic skew
products over a mixing  SFT could share the same zeta
function $\zeta_{\tau}$. His question followed the study of dozens
of examples, and grew out a study of cocycles describing
how Markov measures change under a flow equivalence of
SFTs as in \cite{araujo}.

A natural way to attack Question \ref{billsqrevised}
is to consider how the algebraic relations
SE-$\ZG$ and SSE-$\ZG$ can refine
a prescribed $\det (I-tA)$. If the refinement is
infinite, then there is an issue of constructing
\gps  matrices realizing an
infinite class on which the algebraic
invariants differ.  In Section  \ref{parrysection},
we'll carry out this program at the level
of SSE-$\ZG$, when $\nkone(\ZG)$ is not trivial.
In this case, for  {\it every} $A$ the answer
is negative.

If $\nkone(\ZG)$ is trivial, then
SE-$\ZG$ and SSE-$\ZG$ are equivalent, by
Theorem \ref{sseclassif}.
By appeal to SE-$\ZG$  invariants,
Theorem \ref{ssezgexample} below gives a negative
answer to Parry's question for every $G$,
regardless of whether $\nkone(\ZG)$
is trivial.
However, in contrast to the SSE-$\ZG$  invariants,
the  SE-$\ZG$  invariants do not provide an infinite refinement
of the periodic data of $A$ for {\it every} $A$.
We will give  examples for
which the data $\det (I-tA)$ determines
the SE-$\ZG$ class of $A$.

$\mathfrak M(A)$ in the statement of Theorem
\ref{ssezgexample} was defined in Question \ref{billsqrevised}.

\begin{theorem} \label{ssezgexample}
Suppose  $G$ is a nontrivial finite group.  There is a \gps
matrix $A$ over $\ZG$ such that
$\mathfrak M(A)$ contains infinitely
many SE-$\ZG$ equivalence classes.
\end{theorem}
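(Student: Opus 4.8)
The plan is to translate SE-$\ZG$ into module language via Proposition \ref{3waystose} and then to exhibit an explicit infinite family of presentations that share the periodic data of a fixed matrix but realize infinitely many isomorphism classes of cokernel module. By Proposition \ref{3waystose}, two square matrices $B,B'$ over $\ZG$ are SE-$\ZG$ if and only if the $\ZG[t]$-modules $\cok(I-tB)$ and $\cok(I-tB')$ are isomorphic. So it suffices to build \gps matrices $B_i$ over $\Z_+G$, all with the same augmentation $\overline{B_i}$ (hence with $\overline{B_i}$ pairwise SSE over $\Z_+$) and the same periodic data $P_{B_i}$, but with $\cok(I-tB_i)$ pairwise non-isomorphic. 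Setting $A=B_0$ then places every $B_i$ in $\mathfrak M(A)$ and produces infinitely many SE-$\ZG$ classes there.

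First I would produce the family at the level of polynomial presentations, where positivity is easy to arrange and the off-diagonal data is free to vary. Fix a nontrivial $g\in G$ and work with block upper-triangular matrices $A_c=\begin{pmatrix} a & c \\ 0 & b\end{pmatrix}$ over $\Z_+G[t]$, where $a,b$ are chosen once and for all and the off-diagonal $c$ ranges over an infinite family $\{c_i\}$ obtained from a fixed $c_0$ by relabelling coefficients among the elements of $G$ (for instance converting copies of $1$ into copies of $g$). This keeps all entries in $\Z_+G$ and fixes the augmentation $\overline{c_i}=\overline{c_0}$. Two features make this work: the trace of every power of a block-triangular matrix depends only on the diagonal blocks, so $\mathcal T_{A_{c_i}}$, and hence the periodic data, is independent of $c_i$; and the common augmentation forces all $\overline{A_{c_i}}$ to coincide, so condition (1) of Question \ref{billsqrevised} is automatic by Remark \ref{sharpremark}. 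The module $\cok(I-tA_{c_i})$ is an extension of $\cok(I-tb)$ by $\cok(I-ta)$ whose class is governed by $c_i$; the conceptual crux is to show these extensions are pairwise non-isomorphic as $\ZG[t]$-modules. Note that no single character of $\ZG$ can detect this (the characteristic polynomial is fixed under every character, since the diagonal is fixed), so the infinitude is a genuinely $\ZG[t]$-module phenomenon --- exactly why SE-$\ZG$ is strictly finer than the periodic data. I would establish it by extracting a computable invariant of the extension (a Fitting-ideal or gluing invariant of $\cok(I-tA_{c_i})$) and showing it assumes infinitely many values on the family; the relevant primitivity bookkeeping can be organized using Appendix \ref{zgprimse}.

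Finally I would pass from these polynomial presentations to honest \gps matrices over $\Z_+G$. Applying Proposition \ref{sizematters}, together with Lemma \ref{sharplemma} and Theorem \ref{sseasposeq}, to each $A_{c_i}$ yields a matrix $B_i=A_{c_i}^{\diamond}$ over $\Z_+G$ with $I-tB_i$ positive equivalent to $I-A_{c_i}$; positive equivalence preserves both the SE-$\ZG$ class (the cokernel module) and the trace series, so the $B_i$ inherit the distinctness of their modules, the common periodic data, and augmentations that remain SSE over $\Z_+$. The step I expect to be the main obstacle is \gps-ness: the block-triangular presentation is reducible, so I must incorporate a positive mixing ``filler'' that renders each $B_i$ \gps without disturbing the cokernel module, the periodic data, or the augmentation class. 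This is precisely the kind of move the positive-K-theory formalism of Section \ref{sec:pmp} is designed to perform, but verifying that one filler simultaneously forces primitivity and leaves all three invariants fixed, uniformly across the entire infinite family, is where the real work lies.
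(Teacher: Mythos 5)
Your skeleton does match the paper's strategy (block-structured polynomial presentations with a fixed diagonal and a varying entry, trace invariance from triangularity, detection of SE-$\ZG$ classes through the modules $\cok(I-tA)$ of Proposition \ref{3waystose}, then positive K-theory to reach \gps matrices), but the two steps you leave open are precisely where the proof lives, and as formulated they fail. First, your family is not infinite: a fixed polynomial $c_0$ admits only finitely many relabellings of its coefficients among the elements of $G$, so $\{c_i\}$ has at most $|G|^N$ members, $N$ the coefficient sum of $c_0$. Worse, relabellings tend to produce \emph{isomorphic} modules: already for $G$ abelian, replacing $c$ by $cg$ is realized by conjugating $\left(\begin{smallmatrix} a & c\\ 0 & b\end{smallmatrix}\right)$ by $\diag(I,gI)$, which preserves $\cok(I-tA_c)$ and hence the SE-$\ZG$ class. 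So your family yields finitely many classes, possibly only one. The paper escapes both problems by letting the varying entry \emph{grow}: it uses $p_k=(e-g)(t^{n_1}+\cdots+t^{n_k})$, a sum of $k$ monomials with $\overline{p_k}=0$, and the pairwise non-isomorphism you defer to ``a Fitting-ideal or gluing invariant'' is proved by a concrete computation: apply the regular representation $M\mapsto\widetilde M$, evaluate $t\mapsto 1$, and observe that the relevant block of $I-C_k$ becomes $\left(\begin{smallmatrix} 0 & k(I-P)\\ 0 & 0\end{smallmatrix}\right)$ with $P=\widetilde g$, so an isomorphism would force $\cok(k(I-P))\cong\cok(j(I-P))$; since $\cok(k(I-P))\cong(\Z/k\Z)^{(m-1)c}\oplus\Z^c$, distinct $k$ give non-isomorphic groups.

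Second, the primitivity step is not merely ``the real work'' — as you have set things up it is impossible. Your $A_{c_i}$ are honestly nonnegative and block triangular, i.e.\ reducible presentations, and by Theorem \ref{sseasposeq} any $B_i$ with $I-tB_i$ positive equivalent to $I-A_{c_i}$ is SSE over $\Z_+G$ to a matrix presenting a non-mixing $G$-SFT; since mixing is an invariant of SSE-$\Z_+G$, no positive ``filler'' can make $B_i$ essentially \gps while staying in its positive-equivalence class. The paper's resolution is to do the mixing \emph{before} positivization, by non-positive moves: conjugate the block-diagonal $C_k$ by explicit elementary matrices $U,V$ (this spreads entries across all blocks while preserving the cokernel module and, by Proposition \ref{kappatau}, the trace series), and then absorb the resulting negative entries by right multiplications $E_{25}(s),\dots,E_{25}(s^k)$ and $E_{21}(s^i)$, whose $u$-positive terms dominate the negative ones for a suitable choice of the exponents $n_i$. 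These moves are elementary over $\ZG[t]$, not positive equivalences, yet they push down to positive equivalences over $\Z[t]$ exactly because $\overline{p_k}=0$ — which is how condition (1) of $\mathfrak M(A)$ (SSE-$\Z_+$ of the augmentations) is secured. Your choice of a positive varying entry with $\overline{c_i}=\overline{c_0}\neq 0$ forfeits this mechanism.
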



\begin{proof}
  We will define some matrices over $\ZG[t]$.
  Let $u = \sum_{g\in G}g \in \ZG$.
Fix $g$ an element of $G$ distinct from the identity $e$.
Set $s=ut\in \ZG[t]$ and $w=et$.
Below, $p_k$ in $\ZG [t]$ will depend on $k\in \Z_+$,
with $p_0=0$. Given $r$, $E_{ij}( r)$ denotes
the basic elementary  matrix of appropriate
size which equals $r$ in the $i,j$ entry and otherwise equals $I$.
Define $5\times 5$ matrices equal to $I$ except that
$U(3,4)=U(3,5)=1=V(5,1)=V(4,1)$.
$U$ will act by adding column 3 to columns 4 and 5.
$V$ will act by adding row 1 to rows 4 and 5. Define
\begin{align*}
C_k &=
\begin{pmatrix}
4s & s & s & 0 & 0 \\
4s & s & s & 0 & 0 \\
4s & 2s & 2s & 0 & 0 \\
0 & 0& 0& w & p_k \\
0 & 0& 0& 0 & w
\end{pmatrix}  \\
D_k =   U^{-1}  C_k U &=
\begin{pmatrix}
4s & s & s & s & s \\
4s & s & s & s & s \\
4s & 2s & 2s & 2s-w & 2s-w -p_k  \\
0 & 0& 0& w & p_k \\
0 & 0& 0& 0 & w
\end{pmatrix}  \\
F_k = V D_k V^{-1} &=
\begin{pmatrix}
2s & s & s & s & s \\
2s & s & s & s & s \\
2w+p_k & 2s & 2s & 2s-w & 2s-w -p_k  \\
2s -w-p_k & s& s& s+w & s+p_k \\
2s-w         & s& s& s & s+w
\end{pmatrix}  \ .
\end{align*}
We will choose $p_k$ to be a sum of $k$ monomials,
$p_k= (e-g)(t^{n_1} + \cdots + t^{n_k})$.
Define $A=F_0$. Then $A= t\As$  and $\As$ is \gp.
For each $k$, we have $\overline{p_k} =0$, and therefore
$\overline{F_k}=A$.
We will arrange the following.
\begin{enumerate}
\item
The
$\ZG [t]$ modules $\cok (I-C_k)$ are pairwise
not isomorphic.
\item
For each $k$,
there is a matrix $B_k$ over $t\Z_+G[t]$
and a finite string of matrices $F_k=B_{(0)}, B_{(1)}, \dots ,
B_{(m)}=B_k$
such that  the following hold.
\begin{enumerate}
\item
$B_k^{\Box}$
is \gps .
\item
For $1\leq i \leq m$,
$I-B_{(i)}$ equals $E_i(I-B_{(i-1)})$ or $(I-B_{(i-1)}) E_i$, for some
basic elementary  matrix
$E_i$ with offdiagonal entry in $t\ZG [t]$.
\item
For $0\leq i \leq m$,
$\overline{B_{(i)}}$ has all entries in $t\Z_+[t]$.
\end{enumerate}
\end{enumerate}
Suppose we have these conditions. For each $k$, the
$\ZG [t]$ modules $\cok (I-C_k)$,
$\cok (I-F_k)$  and, by 2(b),
$\cok (I-B_k)$ are isomorphic. Therefore the
$\ZG [t]$ modules $\cok (I-B_k)$, are, by (1), pairwise not
isomorphic. Therefore the \gps matrices $B_k^{\Box}$ are
pairwise not shift equivalent over $\ZG$.
However, the elementary equivalences of 2(b) over $\ZG[t]$
push down to elementary equivalences over $\Z[t]$, and by
2(c) these are positive equialences over $\Z[t]$. Therefore
each $\overline{B_k}$ is SSE over $\Z_+[t]$ to $A$, and
the first condition
in the Question \ref{billsqrevised} definiton of $\mathfrak M (A)$
is satisfied. For the second condition, note by 2(b) and Proposition
\ref{kappatau}
that for each $k$ the matrices $B_k^{\Box}$ and $F_k^{\Box}$ have the
same periodic data. $F_k^{\Box}$ and $C_k^{\Box}$ also have the
same periodic data. By the block structure of $C_k$, the entry
$p_k$ has no effect on the traces of powers of $C_k$. Thus
every $C_k$ has the periodic data of $C_0$, which is that of
$A$. This shows the second condition
in the Question \ref{billsqrevised} definition of $\mathfrak M (A)$
is satisfied.
So, it remains to arrange the conditions (1) and (2) above.

For condition (2), consider the multiplication of $I-F_k$ from the
right by matrices $E_{25}(s), E_{25}(s^2), \dots , E_{25}(s^{k})$,
producing say a matrix $I-G_k$. These push down to a positive
equivalence from $I-\overline{F_k}$ to $I-\overline{G_k}$.
We have
\begin{align*}
G_k(3,5)  &= (2s -w- p_k)
+ 2s^2 + 2s^3 + \cdots + 2s^{k+1} \\
G_k(4,5)  &= (s+p_k) + s^2 + s^3 + \cdots + s^{k+1} \ .
\end{align*}
 Thus for suitable $p_k$ of the specified form, these two
entries of $G_k$ will lie in $\Z_+G[t]$.
Apply the same procedure with
$E_{21} $ in place of $E_{25}$ to likewise address the sign issue
for the 1,3 and 1,4 entries. The resulting matrix is our $B_k$.

Finally, we address condition (1).
For $h$ in $G$, let $\widetilde h$ be the $|G|\times |G|$
permutation matrix which is the image of $G$ under the
left regular representation. This induces a map $M\mapsto \widetilde
M$ sending $5 \times  5$ matrices over $\ZG [t]$ to
$5|G| \times 5|G|$ matrices over $\Z[t]$.
Suppose there is an isomorphism of $\ZG [t]$ modules
$\cok  (I-C_k)  \to \cok ( I-C_j)  $.
Let the  homomorphism $\Z [t] \to \Z$ induced by $t\mapsto 1$
send a matrix $I-\widetilde C$ to $I-C'$.
Then there is an  induced  isomorphism of $\Z$ modules (abelian
groups),
$\cok (I-C'_j) \to \cok (I-C'_k)$.
The lower right $2|G|\times 2|G|$
block of $I-C'_k$ has the block form
$\left( \begin{smallmatrix} 0 & k(I-P)\\ 0&0 \end{smallmatrix} \right)$,
where $P$ is $\widetilde g$.
 From the block diagonal form of $C_j$ and
$C_k$ we conclude that $\cok (k(I-P))$ and $\cok (j(I-P))$ are
isomorphic groups.

But, let $m$ be the order of $g$ in $G$ and let $c= |G|/m$.
$P$ is conjugate by a permutation matrix to
the direct sum of $c$ copies of a matrix
$C$, where $C$  is  an $m\times m$
cyclic permutation matrix.  $I_m-C$ is
$\text{SL}_m\Z$-equivalent to $I_{m-1}\oplus 0_1$. Therefore
$\cok (k(I-P))$ is isomorphic to $(\Z/k\Z)^{(m-1)c}\oplus \Z^c$,
and for positive integers $j\neq k$, $\cok (j(I-P))$  and $\cok (k(I-P))$  cannot be
isomorphic. This contradiction finishes the proof.
\end{proof}

\begin{lemma} \label{forcedse}
Suppose $G$ is a finite group, and let
$u=\sum_g g$.
Suppose $A$ and $B$ are
matrices over $\ZG$
with some powers $A^p, B^q$ all of whose
entries lie in $u\Z$.
Suppose that $\overline A$
and $\overline B$ are SE over $\Z$.
Then $A$ and $B$ are SE over $\ZG$.
\end{lemma}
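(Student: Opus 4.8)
The plan is to avoid constructing shift equivalence matrices by hand and instead work at the level of cokernel modules, using the equivalence of (1) and (2) in Proposition~\ref{3waystose} (with a fixed row- or column-vector convention used throughout). Thus it suffices to produce an isomorphism of $\ZG[t]$-modules $\cok(I-tA)\cong\cok(I-tB)$. The whole argument rests on one observation: the element $u=\sum_g g$ satisfies $gu=u$, hence $uz=\overline z\,u$ for every $z\in\ZG$ (and $u^2=|G|u$), so multiplication by $u$ collapses the $G$-action to the augmentation. I will use this to show that $G$ acts \emph{trivially} on $\cok(I-tA)$, which reduces the $\ZG[t]$-module to a $\Z[t]$-module computable from $\overline A$ alone.

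First I would record that the hypothesis propagates to all high powers: since $A^p$ has entries in $u\Z$, the identity $uz=\overline z\,u$ gives that $A^k$ has entries in $u\Z$ for every $k\ge p$ (and likewise $B^k$ for $k\ge q$). Next, in $M:=\cok(I-tA)=\ZG[t]^n/(I-tA)\ZG[t]^n$ the defining relation yields $[v]=[\,t^kA^kv\,]$ for every class $[v]$ and every $k$. Taking $k\ge p$, each entry of $A^kv$ lies in $u\Z[t]$, so $(g-1)$ annihilates $A^kv$ outright because $(g-1)u=gu-u=0$. Hence $(g-1)[v]=[\,t^k(g-1)A^kv\,]=0$ for every $g\in G$, i.e. $G$ acts trivially on $M$. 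Trivial $G$-action means $M$ is the pullback of a $\Z[t]$-module along the augmentation $\ZG[t]\to\Z[t]$ (Definition~\ref{augdefn}); concretely, base change along this ring surjection is right exact and carries $I-tA$ to $I-t\overline A$, so $M\cong M\otimes_{\ZG[t]}\Z[t]\cong\cok(I-t\overline A)$ as $\Z[t]$-modules. The same reasoning gives $\cok(I-tB)\cong\cok(I-t\overline B)$.

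Finally I would invoke the integer hypothesis: since $\overline A$ and $\overline B$ are SE over $\Z$, Proposition~\ref{3waystose} (with $R=\Z$) provides a $\Z[t]$-module isomorphism $\cok(I-t\overline A)\cong\cok(I-t\overline B)$. Splicing the three isomorphisms produces a $\Z[t]$-isomorphism $\cok(I-tA)\cong\cok(I-tB)$, and because both sides carry the trivial $G$-action, any $\Z[t]$-linear map between them is automatically $\ZG[t]$-linear. Thus $\cok(I-tA)\cong\cok(I-tB)$ as $\ZG[t]$-modules, and Proposition~\ref{3waystose} with $R=\ZG$ yields that $A$ and $B$ are SE over $\ZG$. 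The step I expect to be the real obstacle is exactly this reduction to the integer case: the naive idea of lifting integer shift equivalence data $(P,Q)$ to $(uP,uQ)$ over $\ZG$ fails, since $u^2=|G|u$ forces the product $uP\cdot uQ$ to equal $|G|^2$ times a power rather than an exact power of $A$, so no honest lag equation can hold. Passing to cokernel modules sidesteps this scalar defect, and the crux becomes the triviality of the $G$-action established above.
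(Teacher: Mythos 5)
Your argument is correct, but it is not the route the paper takes. The paper's proof is an explicit matrix construction: using $uM=u\overline M$ it writes $A^{p}=u\tfrac{1}{|G|}\,\overline A^{\,p}$, takes an integer shift equivalence $(R,S)$ between $\overline A^{\,\ell}$ and $\overline B^{\,\ell}$, and verifies directly that $\widetilde R=A^{p}R$ and $\widetilde S=B^{p}S$ satisfy the shift equivalence equations over $\ZG$ with lag $2p+\ell$; the scalar defect you point to ($u^{2}=|G|u$) is cancelled exactly by the two factors $\tfrac{1}{|G|}$, so the ``naive'' lifting is repaired rather than sidestepped. Your proof instead runs through Proposition \ref{3waystose} twice, the new input being that $(g-1)u=0$ forces $G$ to act trivially on $\cok(I-tA)$ and $\cok(I-tB)$, so both modules are pulled back along the augmentation and the integer hypothesis finishes; this is a genuinely different and conceptually cleaner argument, since it isolates why the lemma is true (the cokernel sees only the augmentation). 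What the paper's computation buys in exchange is strictly more than the stated conclusion: the matrices $A^{p}R$, $B^{p}S$ visibly inherit nonnegativity from $R$, $S$ and the powers of $A$, $B$, and the paper relies on precisely this when the lemma that follows deduces the SE-$\Z_+G$ statement ``from Lemma \ref{forcedse}''; your module-theoretic proof cannot track positivity, so it proves the lemma as stated but would not support that later citation. One caution: for nonabelian $G$ you must commit to the convention (say row vectors, with matrices acting on the right) under which $\cok(I-tA)$ is an honest one-sided $\ZG[t]$-module; your display $(g-1)[v]=[t^{k}(g-1)A^{k}v]$ puts the scalar and the matrix on the same side, which is not literally meaningful in that setting, though the corrected form $(g-1)[v]=[t^{k}(g-1)(vA^{k})]$ goes through verbatim since the entries of $vA^{k}$ lie in $u\Z[t]$ and $(g-1)u=0$.
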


\begin{proof}
For any matrix $M$ over $\ZG$, we have
$uM=u\overline M$.  So,
 $A^{p} =
u (1/|G|) \overline{A^p} = u (1/|G|) \overline A^p$,
with $(1/|G|) \overline A^p$ having integer entries.
For $k>0$,
 $A^{p+k} =
u(1/|G|) \overline A^{p+k}$.
Without loss of generality, we suppose $p=q$.
Suppose $R,S$ gives an SE over $\Z$ of $\overline A^{\ell}$ and
$\overline B^{\ell}$:
\[
\overline A^{\ell}=RS\ , \ \overline B^{\ell}=SR \ , \
 \overline AR = R \overline B \ , \ S \overline A = \overline B S\ .
\]
Define $\widetilde R = A^pR = u (1/|G|) \overline A^p R$ and
$\widetilde S = B^pS= u (1/|G|) \overline B^p S$. Then
\begin{align*}
\widetilde R \widetilde S &=
\Big(u (\frac 1{|G|} \overline A^p R)\Big)\Big(u (\frac 1{|G|}
\overline B^p S)\Big)
=
u (\frac 1{|G|} \overline A^p R\overline B^p S) \\
&=
u (\frac 1{|G|} \overline A^p RS\overline A^p )
=
u (\frac 1{|G|} \overline A^{2p+\ell} ) = A^{2p+\ell} \ , \ \
\textnormal{ and } \\
A \widetilde R & =  A \Big(u (\frac 1{|G|} \overline A^p R)\Big)
= u \frac 1{|G|}\overline A^{p+1}  R \\
&= u \frac 1{|G|}\overline A^{p}  R \overline B
= u \frac 1{|G|}\overline A^{p}  R B=
\widetilde R B \
\end{align*}
(for the last line, note that
 $u$ lies in the center of $\ZG$).
Likewise,
$\widetilde S \widetilde R= B^{2p+\ell}$ and
$B \widetilde S
=  \widetilde S A $ .
\end{proof}


It is easy to construct matrices $A$ over $\Z_+G$
such that some power $A^p$ has all entries in $u\ZG$.
For example, take $A$ over $u\ZG$; or let $A =B + N$
where $B$ is over  $u\Z_+G$ and
$N$ over $\ZG$ is nilpotent with $uN=0$. If $B$ here is
also $G$-primitive and $B-N$ has all entries over
$\Z_+G$, then $A$ will be $G$-primitive.

\begin{lemma} Suppose $A$ is $n\times n$ over $\ZG$,
with $m=|G|$. Let $\tau_k$ denote $\trace (A^k)$,
with $\tau_{k,g}$ the integers such that $\tau_k = \sum_{g\in G}
\tau_{k,g}g$ .
Then the following are equivalent.
\begin{enumerate}
\item
There is $p$ in $\N$ such that $A^p$ has all entries in
$u\ZG$.
\item
$m\tau_{k,e} = \overline{\tau_k}$,
for $1\leq k \leq mn$,
\end{enumerate}
Now suppose a positive power of $A$ has all entries
in $u\ZG$ and $B$ is a matrix over $\ZG$ such that
(i)  $B$ and $A$ have the same periodic data or
(ii) $B$ is SE over $\ZG$ to $A$. Then
some positive power of $B$ has all entries
in $u\ZG$. Consequently, for $\mathcal R=
\Z$ or
$\mathcal R= \Z_+$:
if $\overline{A}$ and $\overline{B}$ are SE-$\mathcal R$,
then  $A$ and $B$ are SE-$\mathcal RG$.
\end{lemma}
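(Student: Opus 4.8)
The plan is to linearize over the regular representation. Let $\rho:G\to\GL_m(\Z)$ send $g$ to its (left-multiplication) permutation matrix, extend it to the ring map $\ZG\to M_m(\Z)$, $y=\sum_g y_g g\mapsto \widetilde y=\sum_g y_g\rho(g)$, and then entrywise to the multiplicative injection $A\mapsto\widetilde A\in M_{nm}(\Z)$, so that $\widetilde{A^k}=\widetilde A^{\,k}$. Two trace identities drive everything: since $\trace\rho(g)=m$ for $g=e$ and $0$ otherwise, $\trace(\widetilde A^{\,k})=m\,\tau_{k,e}$; and since the augmentation is a ring map commuting with trace, $\trace(\overline A^{\,k})=\overline{\tau_k}$. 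Each $\rho(g)$ fixes the all-ones vector $\mathbf 1\in\mathbb{Q}^m$ and preserves $W_0=\{v:\sum_i v_i=0\}$, so $\widetilde A$ preserves $\mathbb{Q}^{nm}=V\oplus W$ with $V\cong\mathbb{Q}^n$ spanned by the $n$ block copies of $\mathbf 1$ and $W=W_0^{\,n}$; on $V$ it acts as $\overline A$, and writing $\widehat A:=\widetilde A|_W$ (dimension $n(m-1)$) we get
\[
\trace(\widehat A^{\,k})=\trace(\widetilde A^{\,k})-\trace(\overline A^{\,k})=m\,\tau_{k,e}-\overline{\tau_k}.
\]
Thus condition (2) says exactly that $\trace(\widehat A^{\,k})=0$ for $1\le k\le mn$. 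For condition (1), note that for $y\in\ZG$ the operator $\widetilde y$ preserves $\mathbb{Q}\mathbf 1\oplus W_0$ and is scalar $\overline y$ on $\mathbf 1$; an operator with this form and $\widetilde y|_{W_0}=0$ is forced to be $(\overline y/m)J$ (with $J=\widetilde u$ the all-ones matrix), i.e.\ $y\in\Z u=u\ZG$. Applying this blockwise to $A^p$ gives that $A^p$ has all entries in $u\ZG$ iff $\widehat A^{\,p}=\widetilde A^{\,p}|_W=0$, i.e.\ iff $\widehat A$ is nilpotent.

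I would then finish $(1)\!\iff\!(2)$ by the elementary fact that a characteristic-zero matrix is nilpotent iff all its power-traces vanish, refined by Newton's identities: since $\widehat A$ has size $n(m-1)\le mn$, vanishing of $\trace(\widehat A^{\,k})$ for $1\le k\le mn$ already forces the elementary symmetric functions of its eigenvalues to vanish, hence the characteristic polynomial is $x^{\,n(m-1)}$ and $\widehat A$ is nilpotent; conversely nilpotency kills all power-traces.

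For the second assertion, assume $A^p\in u\ZG$, so $\widehat A$ is nilpotent and $m\tau_{k,e}(A)=\overline{\tau_k(A)}$ for every $k\ge1$, and transfer this to $B$. The class map $\kappa$ records in the coefficient of $\{e\}$ exactly the $e$-coefficient, and in its augmentation exactly the augmentation; so from $\kappa(\trace A^k)=\kappa(\trace B^k)$ one reads off $\tau_{k,e}(A)=\tau_{k,e}(B)$ and $\overline{\tau_k(A)}=\overline{\tau_k(B)}$. Under (i) this holds for all $k$ (equal periodic data gives $\kappa\mathcal T_A=\kappa\mathcal T_B$), and under (ii) for all $k\ge\ell$ (SE-$\ZG$ with lag $\ell$ gives $\kappa(\trace A^k)=\kappa(\trace B^k)$ there). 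Either way $\trace(\widehat B^{\,k})=m\tau_{k,e}(B)-\overline{\tau_k(B)}=0$ for all large $k$. That eventual vanishing suffices is the one real point to verify: if $\widehat B$ had distinct nonzero eigenvalues $\mu_1,\dots,\mu_r$ with multiplicities $a_j>0$, then $\sum_j a_j\mu_j^{\,k}=0$ for $r$ consecutive large $k$ is an invertible Vandermonde system forcing $a_j\mu_j^{\,k}=0$, a contradiction; hence $\widehat B$ is nilpotent and some $B^q\in u\ZG$.

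Finally the ``consequently'' is an appeal to Lemma \ref{forcedse}: having produced $p,q$ with $A^p,B^q\in u\ZG=u\Z$, and with $\overline A,\overline B$ SE over $\mathcal R$ (hence over $\Z$), that lemma yields $A,B$ SE over $\ZG$; for $\mathcal R=\Z_+$ the transfer matrices $A^pR$ and $B^qS$ built in its proof have entries in $u\Z_+\subseteq\Z_+G$, since $A^p,B^q\in u\Z_+$ and $R,S\ge0$, so the same equations exhibit an SE of $A,B$ over $\Z_+G$. I expect the main obstacle to be two-fold: the reverse direction of the $(1)\!\iff\!$ nilpotency identification (pinning each block of $A^p$ to the specific multiple $(\overline{\,\cdot\,}/m)u$ of $u$ from its vanishing on $W$), and the case-(ii) subtlety that SE only controls traces for $k\ge\ell$, which is why the nilpotency of $\widehat B$ must come from eventual rather than initial vanishing of power-traces.
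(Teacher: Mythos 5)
Your proof is correct. For the equivalence of (1) and (2) it is essentially the paper's own argument: both pass to the regular representation $\widetilde A$, split $\mathbb{Q}^{mn}$ into the subspace corresponding to $(u\ZG)^n$ (on which $\widetilde A$ acts as $\overline A$, contributing trace $\overline{\tau_k}$) and the complementary invariant subspace, use $\trace(\widetilde A^k)=m\tau_{k,e}$, and detect nilpotency of the restriction to the complement by vanishing of the first $mn$ power traces; your Newton-identity details and the block-by-block identification of $u\ZG$ with ``kills the zero-sum subspace'' just make explicit what the paper leaves terse.

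Where you genuinely diverge is in the transfer to $B$ under hypothesis (ii). The paper does not return to traces there: from an SE with lag $\ell$, say $A^\ell=RS$ and $B^\ell=SR$, it writes $B^{2\ell}=S(A^\ell R)$ and observes that if $A^\ell$ has all entries in $u\ZG$ then so does $B^{2\ell}$, since $u$ is central and $u\ZG$ absorbs multiplication on both sides; this is a two-line algebraic identity requiring nothing spectral. You instead invoke the fact that SE with lag $\ell$ forces $\kappa(\trace(A^k))=\kappa(\trace(B^k))$ for $k\ge\ell$, conclude that $\trace(\widehat B^k)$ vanishes for all large $k$, and then need the additional (correct) Vandermonde lemma that eventual vanishing of power traces already forces nilpotency. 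Both routes are valid and handle the same subtlety --- that periodic data need not be SE-invariant for nonabelian $G$ --- but differently: the paper sidesteps traces entirely, while your argument stays inside the spectral framework of part (1)$\iff$(2) and isolates, as a reusable by-product, the fact that nilpotency is detected by any tail of the power-trace sequence. Your closing remark that the matrices $A^pR$ and $B^qS$ from the proof of Lemma \ref{forcedse} are nonnegative in the $\Z_+$ case also fills in a step the paper compresses into ``follows now from Lemma \ref{forcedse}.''
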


\begin{proof}
We use $\widetilde A\colon \Z^{mn} \to \Z^{mn}$ constructed
as in Appendix \ref{zgprimse}.
Let $W$ be the subspace of $\Z^{mn}$ corresponding to
$(u\ZG)^n$. $A$ has a positive power with all entries in
 $u\ZG$ if and only if $\widetilde A$ has a power which maps
$\Z^{mn} $ into $W$ if and only if $\widetilde A$ restricted to
the complementary invariant subspace is nilpotent. This
holds if and only if the sequences
$(\trace (\widetilde A^k))_{1\leq k \leq mn}$ and
$(\trace ((\widetilde A|_W)^k))_{1\leq k \leq mn}$ are
equal. We have $\trace (\widetilde A^k)
=m\tau_{k,e}$ and  (because $A$ acts on $(u\ZG^n)$ exactly
as $\overline A $ acts on $\Z^n$)
$\trace ((\widetilde A|_W)^k)= \overline{\tau_k}$.
This proves the equivalence of (1) and (2).

Then (i) holds because (1)$\iff$(2) shows (1)
depends only on the periodic data. Although the
periodic data need  not be an invariant of SE-$\ZG$
when $G$ is nonabelian, if matrices $A,B$ are
SE-$\ZG$ then for every large enough $\ell \in \N$
there are $R,S$
over $\ZG$ such that  $A^{\ell} =RS$ and $B^{\ell} =SR$,
and then  $A^{2\ell} = (A^{\ell}R)S$ and $B^{2\ell} =S(A^{\ell}R)$.
Clearly if $A^{\ell}$ is over $u\ZG$, then so is $B^{2\ell}$.
The final claim follows now from Lemma \ref{forcedse}.
\end{proof}

\begin{proposition} \label{cayley}
Suppose $G$ is a finite abelian group. Set
$u=\sum_g g$. Let $\mathcal Z_u$ denote the
set of polynomials of the form
$1 + \sum_{i=1}^kc_iut^i$, with each $c_i$ in $\Z$.
Suppose $A$ and $B$ are square matrices over
$\ZG$ such that
$ \det(I-tA)$ and $\det (I-tB)$ lie in $\mathcal Z_u$,
and $\overline A$ and $\overline B$ are SE over
$\Z$.
Then $A$ and $B$ are SE over $\ZG$.
\end{proposition}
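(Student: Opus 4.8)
The plan is to reduce the statement to Lemma~\ref{forcedse} by proving that the hypothesis $\det(I-tA)\in\mathcal{Z}_u$ forces some power of $A$ to have all of its entries in $u\Z$ (equivalently $u\ZG$, since $ux=\overline{x}\,u$ for every $x\in\ZG$), and likewise for $B$. The bridge to the trace data is the preceding lemma: its condition (1), that a power of $A$ lie over $u\ZG$, is equivalent to its condition (2) on the traces $\tau_k=\tr(A^k)$, so it is enough to control these traces.

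First I would invoke the formal identity (cf.~\eqref{zetaequation}), valid over any commutative ring,
\[
\sum_{k\ge1}\frac{\tr(A^k)}{k}\,t^k \;=\; -\log\det(I-tA).
\]
Writing the hypothesis as $\det(I-tA)=1+u\,q(t)$ with $q\in t\Z[t]$, and using $u^2=mu$ (so $u^j=m^{j-1}u$ and $(uq)^j=m^{j-1}u\,q^j$), a termwise expansion of the logarithm gives
\[
-\log\bigl(1+u\,q(t)\bigr)\;=\;-\frac{u}{m}\,\log\bigl(1+m\,q(t)\bigr)\ \in\ u\,\mathbb{Q}[[t]].
\]
Comparing coefficients, $\tr(A^k)\in u\mathbb{Q}\cap\ZG=u\Z$ for every $k$. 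Thus each $\tau_k=c_k u$ with $c_k\in\Z$, whence $\tau_{k,e}=c_k$ and $\overline{\tau_k}=mc_k$, so condition (2) of the preceding lemma holds (for all $k$, a fortiori for $1\le k\le mn$). Therefore some power $A^p$ has all entries in $u\ZG$. Running the identical argument for $B$ produces a power $B^q$ over $u\ZG$.

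It then remains only to apply Lemma~\ref{forcedse}: we have powers $A^p$ and $B^q$ with entries in $u\Z$, and by hypothesis $\overline{A}$ and $\overline{B}$ are SE over $\Z$, so the lemma concludes that $A$ and $B$ are SE over $\ZG$.

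The one substantive step is the middle one: recognizing that $\det(I-tA)\in\mathcal{Z}_u$ is precisely the condition that confines the trace series to $u\mathbb{Q}[[t]]$, and hence (through the preceding lemma) forces a power of $A$ over $u\ZG$. Conceptually this says that $\chi(A)$ is nilpotent for every nontrivial character $\chi$ of $G$, i.e.\ that $A$ is, modulo a nilpotent part, concentrated on the trivial-character ($u$) component; the logarithm computation is merely the concrete commutative-ring form of that observation. The remaining steps are direct applications of the two cited lemmas.
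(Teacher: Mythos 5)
Your proof is correct, and it ends with the same reduction as the paper: both arguments show that the hypothesis forces some power of $A$ (and of $B$) to have all entries in $u\Z$, and then quote Lemma \ref{forcedse}. But the mechanism you use for that middle step is genuinely different. The paper's proof is a one-line application of the Cayley--Hamilton theorem (hence the label \emph{cayley}): if $A$ is $n\times n$ and $\det(I-tA)=1+\sum_i c_iut^i$, then the characteristic polynomial of $A$ is $s^n+\sum_i c_ius^{n-i}$, so $A^n=-\sum_i c_iuA^{n-i}$, and since $uM=u\overline{M}$ for any matrix $M$ over $\ZG$, the matrix $A^n$ (and every higher power) has entries in $u\Z$; no trace information and no auxiliary lemma are needed. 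You instead pass through the trace series: the logarithmic identity together with $u^j=m^{j-1}u$ shows $\tr(A^k)\in u\Z$ for all $k$, and you then invoke the unnamed lemma preceding Proposition \ref{cayley} (whose proof runs through the regular representation $\widetilde{A}$ and a nilpotency argument on a complementary invariant subspace) to convert that trace condition into the existence of a power of $A$ over $u\ZG$. Your route is longer and leans on heavier machinery, but it makes explicit that the operative condition is really one on the periodic data: for $G$ abelian the conditions $\det(I-tA)\in\mathcal Z_u$ and $\tr(A^k)\in u\Z$ for all $k$ are equivalent (by the same $\exp$/$\log$ computation), and, as you observe, both say that $\chi(A)$ is nilpotent for every nontrivial character $\chi$ of $G$. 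So your argument is a correct conceptual reformulation, connecting the hypothesis to the trace-data lemma that the paper proves but does not use here, while the paper's Cayley--Hamilton argument is the more economical of the two.
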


\begin{proof}
By the Cayley-Hamilton Theorem, for all large $n$
the matrices $A^n$ and $B^n$ have entries in $u\Z$.
The theorem then follows from Lemma \ref{forcedse}.
\end{proof}

Proposition \ref{cayley} applies to any $A$ all of whose entries
are integer multiples of $u$; for example,
$A=(e+g)$ with $G =\{e,g\} = \Z/2\Z  $.

In the case of $A$ satisfying the assumptions
of Proposition \ref{cayley}, with $\nkone (\ZG)$ trivial,
 to answer Parry's question
we are left with the open problem: for $A$ \gp,
can the refinement the  SSE-$\ZG$ class of $A$
by SSE-$\ZG_+$ be infinite?
In the case $G=\{e\}$ ($\ZG = \Z$),
that question remains open more than 40 years
after Williams' original paper \cite{Williams73}.

\begin{remark} \label{ParryLivsicRemark}
In \cite[Theorem 7.1]{Parrylivsic}, Parry proved that for $G$ compact and $X$ an irreducible SFT if $f,g:X \to G$ are H\"{o}lder with equal weights on all periodic
points, then $f$ and $g$ are H\"{o}lder cohomologous. If one assumes
only that the $f$ and $g$ weights are conjugate, Parry shows then the
existence of an isometric automorphism $\phi$ of $G$ such that $\phi
f$ and $g$ are cohomologous \cite[Theorem 6.5]{Parrylivsic}.
Parry also gives an example with $G$ finite of two cocycles having conjugate weights for which the isomorphism is necessary, although the example is not mixing \cite[Section 10]{Parrylivsic}.

We note now that in general $\phi$ cannot be chosen
to be the identity even if the extension is mixing (i.e.,
presented by a $G$-primitive matrix $A$ over $\Z_+G$).
 For example, let $G$ be a finite group having an outer
automorphism $\varphi$ for which $\varphi$ preserves all conjugacy
classes of $G$. Such groups exist (see \cite{BrooksbankMizuhara14}); for
example, the group $LP(1,\mathbb{Z}/8)$ consisting of all
linear
permutations
 $x \mapsto \sigma x +\tau $ on $\mathbb{Z}/8$, with $\sigma, \tau$
in $\Z /8$, is such a group. Let $A$ be primitive over
$\mathbb{Z}_{+}G$, and $\tau$ denote the corresponding edge labeling
on the graph of $\overline{A}$ coming from $A$. Then $\phi \tau$ is
another edge labeling, and
$\phi \tau$ and $\tau$ have
conjugate weights on all periodic points.
However, $\phi \tau$ and $\tau$ are not cohomologous.  If they
were, then because they are defined by edge labelings of an
irreducible graph,
by \cite[Lemma 9.1]{Parrylivsic}
there would be a function $\gamma :X_A\to G$
such that $\gamma (x)$ depends only on the initial vertex of $x$ and
$\phi \tau = \gamma^{-1} \tau \gamma$ .
Let $\nu$ be a vertex and let $g$ in $G$ be such that
$g = \gamma(x)$ when $x_0$ has initial
vertex $\nu$. Now for every word $x_0\dots x_k$ beginning and
ending at $\nu$:  if $h= \tau (x_0)\cdots\tau(x_k)$,  then
$\phi (h) = g^{-1}hg$. Because $A$ is $G$-primitive, every element of
$G$ occurs as such an $h$, and therefore $\phi$ is an inner
automorphism. This contradiction shows $\phi \tau$ and $\tau$ are
not cohomologous.
 \end{remark}


\section{Parry's question and SSE-$\ZG$}\label{parrysection}

In this section, we prove the following result,
which gives a strong negative  answer to
Parry's question (\ref{billsq}) whenever
$\textnormal{NK}_1(\ZG)\neq 0$.
(See Appendix \ref{sec:nk1} for a description of the
finite $G$ with nontrivial $\textnormal{NK}_1(\ZG)$.)


\begin{theorem}\label{parryanswer}
Let $G$ be a finite group such that
$\textnormal{NK}_1(\ZG)\neq 0$.
Let $(X,\sigma)$ be a mixing shift of finite type and let
$\tau: X\to G$ be a continuous function defining a mixing
$G$-extension $(X_{\tau},\sigma_{\tau})$  of $(X,T)$.

Then there is an infinite family
of $G$-extensions of $(X,T)$
which are eventually conjugate as $G$-extensions
to $(X_{\tau},\sigma_{\tau})$
and which are pairwise not isomorphic $G$-extensions.
If $G$ is abelian, then they
all have the same dynamical zeta function.
\end{theorem}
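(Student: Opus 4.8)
The plan is to push the whole problem into matrices over $\Z_+G$ and reduce it to a single realization statement, which is where the work concentrates. First present $(X_\tau,\sigma_\tau)$ by a \gps matrix $A$ over $\Z_+G$ (pass to the nondegenerate core and use Proposition \ref{gmixa}). It then suffices to produce an infinite family of \gps matrices $B_k$ ($k\in\N$) over $\Z_+G$ such that: (a) the $B_k$ are pairwise not SSE over $\ZG$; (b) each $B_k$ is SE over $\ZG$ to $A$; and (c) $\overline{B_k}$ is SSE over $\Z_+$ to $\overline A$. Granting (a)--(c), the dictionary between $G$-extensions and $\Z_+G$-matrices finishes the proof: since SSE over $\Z_+G$ implies SSE over $\ZG$, (a) makes the $B_k$ present pairwise nonisomorphic $G$-extensions; by Proposition \ref{eventualconjugacy} together with Proposition \ref{primitiveeventual} (SE over $\Z_+G$ equals SE over $\ZG$ for \gps matrices), (b) makes every $B_k$ eventually conjugate as a $G$-extension to $(X_\tau,\sigma_\tau)$; and (c) ensures each $B_k$ really is an extension of the fixed base $(X,\sigma)$.

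To produce infinitely many SSE-$\ZG$ classes I invoke Theorem \ref{sseclassif}. As $\nkone(\ZG)\neq 0$ it is infinite, and Theorem \ref{sseclassif}(2) identifies the set of SSE-$\ZG$ classes inside the SE-$\ZG$ class of $A$ with $\nkone(\ZG)$ via $[I-tN]\mapsto[A\oplus N]_{SSE}$. I choose nilpotent matrices $N_k$ over $\ZG$ whose classes $[I-tN_k]$ are pairwise distinct; then the $A\oplus N_k$ are pairwise not SSE over $\ZG$ but all SE over $\ZG$ to $A$. Two points are then automatic. First, for any nilpotent matrix $N$ over $\ZG$ one has $\kappa(\trace(N^n))=0$ for all $n\geq 1$: each irreducible complex representation of $G$ sends $N^n$ to a nilpotent matrix of zero trace, and the irreducible characters separate the elements of $\Conj G$. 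Hence $A\oplus N_k$ carries exactly the periodic data of $A$, and by Proposition \ref{finitetracedata} so does every matrix SSE over $\ZG$ to it; when $G$ is abelian, $\det(I-tN_k)=1$ gives moreover the common dynamical zeta function. Second, since the augmentation $\ZG\to\Z$ is split and $\nkone(\Z)=0$, I may take each $N_k$ to have entries in the augmentation ideal, so that $\overline{N_k}=0$; this is what will let me control the base SFT.

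The main obstacle is realization: the $A\oplus N_k$ live over $\ZG$, are neither positive nor irreducible, and must be replaced by \gps matrices $B_k$ over $\Z_+G$ in the same SSE-$\ZG$ class. I carry this out in the polynomial (``positive K-theory'') picture, on the model of the proof of Theorem \ref{ssezgexample}. Starting from $I-t(A\oplus N_k)$ I apply elementary equivalences over $\ZG[t]$; by Theorem \ref{sseaseq} these preserve the $\El(\ZG[t])$-class, hence the SSE-$\ZG$ class, so (a) and the SE-$\ZG$ relation (b) are never touched. The delicate choice is to steer these operations to an endpoint $I-tB_k$ with $B_k$ over $\Z_+G$ and \gpscomma using as in Theorem \ref{ssezgexample} the $G$-positive reservoir $s=ut$ (where $u=\sum_g g$) and the primitivity of $A$ to swamp the negative $\ZG$-entries coming from $N_k$ by basic elementary equivalences $E_{ij}(s^i)$; Lemma \ref{sharplemma}, Proposition \ref{sizematters} and Proposition \ref{hms} provide the passage between polynomial matrices with the NZC property and honest matrices over $\Z_+G$. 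For (c), the choice $\overline{N_k}=0$ makes the starting matrix augment to $I-t(\overline A\oplus 0)$, which is positive equivalent to $I-t\overline A$; since the whole chain of equivalences pushes down under $\ZG[t]\to\Z[t]$ to positive equivalences over $\Z_+[t]$, the image $I-t\overline{B_k}$ is positive equivalent to $I-t\overline A$, i.e. $\overline{B_k}$ is SSE over $\Z_+$ to $\overline A$. This is exactly the role of the condition $\overline{p_k}=0$ in Theorem \ref{ssezgexample}.

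I expect the genuinely hard step, as opposed to the bookkeeping above, to be the constructive guarantee inside the realization: that the negativity of an arbitrary nilpotent representative of a class in $\nkone(\ZG)$ can always be absorbed by $G$-positive additions without destroying $G$-primitivity and while keeping the augmented operations positive over $\Z_+[t]$. This is the constructive heart of Section \ref{parrysection}, and it is where the NZC formalism and the freedom to enlarge the matrices (Proposition \ref{sizematters}) are indispensable.
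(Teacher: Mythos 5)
Your overall architecture coincides with the paper's: present the extension by a \gps matrix $A$, reduce the theorem to producing \gps matrices $B_k$ over $\Z_+G$ that are pairwise not SSE over $\ZG$, are SE over $\ZG$ to $A$, and have $\overline{B_k}$ SSE over $\Z_+$ to $\overline{A}$, and then conclude via Theorem \ref{sseclassif} and Propositions \ref{primitiveeventual} and \ref{eventualconjugacy}. This matrix statement is exactly the paper's Theorem \ref{zginfinite}, and your deduction of the dynamical statement from it matches the paper's proof of Theorem \ref{parryanswer}. Your character-theoretic argument that $\kappa(\trace(N^n))=0$ for every nilpotent $N$ over $\ZG$ is correct, and is a nice alternative to the paper's use of Proposition \ref{sk1fact} (it even gives the periodic-data statement for nonabelian $G$, which the theorem does not require). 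The problem is that the realization step, which is the entire mathematical content of Section \ref{parrysection}, is not proved in your proposal: you defer it explicitly in your final paragraph, and the sketch you offer in its place has two concrete defects.

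First, your claim that each class of $\nkone(\ZG)$ can be represented by a \emph{constant} nilpotent matrix $N_k$ with $\overline{N_k}=0$ does not follow from splitness of the augmentation together with $\nkone(\Z)=0$; those facts only say that each class dies under augmentation. Any attempt to manufacture such a representative by Higman linearization introduces identity blocks, whose augmentation is nonzero; the paper achieves zero augmentation only for \emph{polynomial} representatives (Lemma \ref{amalglemma}), and this is one reason the whole construction lives in the NZC polynomial setting. Second, and more seriously, direct swamping of $I-t(A\oplus N_k)$ by operations $E_{ij}(s^i)$ cannot work in general. The entries of $N_k$ sit in polynomial degree one and their integer coefficients are uncontrolled (and unbounded as $k$ varies), while degree-one positivity in a matrix presenting the fixed conjugacy class is constrained by the periodic data: a large positive degree-one coefficient at a position $(i,j)$ forces correspondingly many periodic points of period one plus the return time from $j$ to $i$, and a general nilpotent block has entries at positions with mutually short return times. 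The paper's machinery exists precisely to evade this: the nilpotent is first shifted to high degrees with coefficients bounded independently of the shift ($t^rN$, Lemma \ref{amalglemma}), the primitive part's coefficients in those degrees are grown like $(\lambda-\epsilon)^k$ by positive equivalences (Lemma \ref{zetalemma}), and only then does the absorption succeed (Proposition \ref{sseembed}, Lemma \ref{suspend}). The shift has a cost your sketch never sees: $[I-tN]\mapsto[I-t^rN]$ is the Verschiebung operator $V_r$, so keeping the resulting classes distinct requires the injectivity of $V_r$ for $r$ prime to $|G|$ (Lemma \ref{VFoperations}). Any correct execution of your plan would have to confront either this or the representation problem above; as written, the proof is missing its core.
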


Theorem  \ref{parryanswer} will be proved as a
corollary to the following result.

\begin{theorem}
\label{zginfinite}
Suppose $G$ is a finite  group and $A$ is a \gps
matrix with spectral radius $\lambda>1$ and $\nkone(\ZG) \neq 0$.
Let $A$ be a \gps  matrix.

Then there is an infinite family $\{A_{i}: i\in \mathbb N\}$
of \gps  matrices
which are pairwise not SSE over $\ZG$ but such that for all $i$
the following hold:
\begin{enumerate}
\item
$A_i$ is SE over $\Z_+G$ to $A$.
\item
$\overline{A_i}$ is SSE over $\Z_+$ to $\overline A$.
\item
If $G$ is abelian, then
$\det(I-tA_{i}) = \det(I-tA) $ .
\end{enumerate}
\end{theorem}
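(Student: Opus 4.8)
The plan is to split the argument into an abstract $K$-theoretic step over $\ZG$ and a positivization step carried out with polynomial matrices, tracking the finer SSE-$\ZG$ invariant in place of the cokernel module used in the proof of Theorem~\ref{ssezgexample}. First I would feed in $\nkone(\ZG)$. By Theorem~\ref{sseclassif}, the SSE-$\ZG$ classes lying inside the single SE-$\ZG$ class of $A$ are in bijection with $\nkone(\ZG)$ via $[I-tN]\mapsto [A\oplus N]_{SSE}$, with $N$ ranging over nilpotent matrices over $\ZG$, and every such $A\oplus N$ lies in the SE-$\ZG$ class of $A$. Since $\nkone(\ZG)\neq 0$ it is not finitely generated, hence infinite, so I may choose nilpotent matrices $N_i$ over $\ZG$ ($i\in\N$) with the classes $[I-tN_i]$ pairwise distinct. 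Then the matrices $A\oplus N_i$ are pairwise not SSE over $\ZG$ while all being SE over $\ZG$ to $A$. Note each $\overline{N_i}$ is nilpotent over $\Z$; since $\nkone(\Z)=0$, over $\Z$ the augmented matrices $\overline A\oplus\overline{N_i}$ are already SSE-$\Z$ to $\overline A$, and for $G$ abelian $\det(I-t(A\oplus N_i))=\det(I-tA)$ because $\det(I-tN_i)=1$.

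This reduces the theorem to the realization statement: given a \gps matrix $A$ with $\lambda>1$ and a nilpotent $N$ over $\ZG$, produce a \gps matrix $A'$ over $\Z_+G$ with $A'$ SSE-$\ZG$ to $A\oplus N$ and $\overline{A'}$ SSE-$\Z_+$ to $\overline A$. Applying this to each $N_i$ and setting $A_i:=A'$, the $A_i$ are pairwise not SSE over $\ZG$, being SSE-$\ZG$ to the pairwise non-SSE-$\ZG$ matrices $A\oplus N_i$, which is the main assertion. The listed conditions then follow formally: $A_i$ is SE-$\ZG$ to $A$, so by Proposition~\ref{primitiveeventual} the two \gps matrices are SE over $\Z_+G$, giving (1); (2) is built into the realization; and for $G$ abelian, SE-$\ZG$ forces $\det(I-tA_i)=\det(I-tA)$ by Proposition~\ref{sk1fact}, giving (3).

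To realize $A'$ I would pass to polynomial matrices. By Theorem~\ref{sseaseq}, $A'$ is SSE-$\ZG$ to $A\oplus N$ exactly when $I-tA'$ is $\El(\ZG[t])$-equivalent to $I-t(A\oplus N)$, and by Theorem~\ref{sseasposeq}, applied over $\Z$ to the augmented data, the SSE-$\Z_+$ class of $\overline{A'}$ is controlled by realizing the pushed-down equivalence as a positive equivalence over $\Z_+[t]$. So, starting from the block matrix $I-t(A\oplus N)$, I would apply a finite string of basic elementary equivalences over $\ZG[t]$, of the form used in the proof of Theorem~\ref{ssezgexample} (adding $G$-positive multiples, built from powers of $s=ut$ and from entries of the \gps block $A$, along rows and columns), chosen so that: (a) the final matrix is $I-tA'$ with $A'$ over $\Z_+G$; (b) every augmented intermediate matrix stays over $t\Z_+[t]$, so the pushed-down string is a positive equivalence over $\Z_+[t]$ and hence $\overline{A'}$ is SSE-$\Z_+$ to $\overline A$; and (c) $A'$ is \gp. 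For (c) I would retain the original \gps block $A$ as a dominating subgraph and attach the finitely many new coordinates coming from $N$ by $G$-positive edges, so that in some fixed power every element of $G$ still labels paths between all pairs of vertices; the hypothesis $\lambda>1$ furnishes the slack needed to enlarge $A$ this way while keeping it \gp. Lemma~\ref{sharplemma} and Propositions~\ref{sizematters} and~\ref{hms} handle the conversions between polynomial matrices and their $\Z_+G$ representatives $A^{\diamond}$.

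The hard part will be steps (b) and (c) jointly: cancelling the sign-indefinite entries of $N$ using only genuine $\El(\ZG[t])$ operations, so that the SSE-$\ZG$ class, and with it the distinctness of the $\nkone$ labels, is preserved exactly, while simultaneously keeping every augmented intermediate positive and the output \gp. The sign-indefiniteness of $N$ over $\ZG$ is the obstruction; the room to absorb it comes from high $t$-powers of the maximally $G$-positive element $s=ut$, whose augmentation is positive, and the verification that this can be arranged uniformly for every nilpotent $N$ parallels conditions 2(a)--2(c) in the proof of Theorem~\ref{ssezgexample}.
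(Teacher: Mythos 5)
Your proposal follows the paper's high-level architecture (a $K$-theoretic step producing infinitely many distinct classes $[I-tN_i]$ in $\nkone(\ZG)$, followed by a positivization/realization step with polynomial matrices), but the realization step as you formulate it is a genuine gap: you propose to produce a $G$-primitive $A'$ over $\Z_+G$ that is SSE over $\ZG$ to $A\oplus N$ itself. That statement is precisely Realization Problem (1) in Section \ref{sec:open} of the paper, which the authors list as open; their proof is engineered specifically to avoid needing it. The reason your absorption plan fails is quantitative: the dominating coefficients supplied by Lemma \ref{zetalemma} (and exploited in Proposition \ref{sseembed}) grow like $(\lambda-\epsilon)^k$ but exist only in degrees $k\geq m_0$ for some possibly large $m_0$ depending on $A$ and $\epsilon$, whereas the block $tN$ has sign-indefinite entries concentrated in degree $1$ with arbitrarily large coefficients; no positive entries of controlled origin are available at low degree to cancel them by genuine $\El(\ZG[t])$ operations. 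A second, related obstruction is condition (3) of Proposition \ref{sseembed}, which requires $\overline{Q}=0$ to conclude $\overline{B^{\Box}}$ is SSE over $\Z_+$ to $\overline{A}$; for $Q=tN$ one has $\overline{Q}=t\overline{N}\neq 0$ in general, so your condition (b) cannot even be initiated from $I-t(A\oplus N)$.

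What the paper actually does is to replace $I-tN_i$ by the suspension $I-t^{r}N_i$: Lemma \ref{amalglemma} converts $I-t^{r}N_i$ by elementary operations into $I-M_r$ with $M_r$ supported in degrees $\geq r$, with $\overline{M_r}=0$, and with coefficients bounded independently of $r$, so that for $r$ large the hypotheses $|q_{ijsg}|\leq\beta^{s}$ and $Q\in\mathcal M(t^{r}\Z[t])$ of Proposition \ref{sseembed} hold and Lemma \ref{suspend} applies. But this substitution changes the $K$-theory class: $[I-t^{r}N_i]=V_r[I-tN_i]$, the Verschiebung of the original class, which in general differs from it. Consequently the paper must prove Lemma \ref{VFoperations} --- $V_r$ is injective on $\nkone(\ZG)$ when $\gcd(r,|G|)=1$, via the identity $F_rV_r=r\cdot\mathrm{id}$ and Weibel's theorem that every element of $\nkone(\ZG)$ has order a power of $|G|$ --- and then choose $r$ both large and coprime to $|G|$, so that the suspended classes remain pairwise distinct and Theorem \ref{sseclassif} yields pairwise non-SSE matrices. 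Your proposal contains no counterpart to this Verschiebung argument; even if some realization of a modified nilpotent block were carried out, you would have no means to conclude that the resulting $G$-primitive matrices are pairwise not SSE over $\ZG$, which is the main assertion of Theorem \ref{zginfinite}. The parts of your argument handling conditions (1) and (3) (via Proposition \ref{primitiveeventual} and Proposition \ref{sk1fact}) do match the paper.
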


To prove Theorem \ref{zginfinite},
we first will work to establish a
rather technical result,
Proposition \ref{sseembed}.
Below,  we will use the
notations of
(\ref{zgtentrynotation}) and the definitions
\eqref{sharpdefinition}, \eqref{defngprimitive}
and \eqref{specradiusdefn}
of a matrix $\As$,
a   $G$-primitive matrix
and  the spectral radius
$\lambda_A$ of a square matrix over
$\ZG$ or $\ZG[t]$. For a polynomial $p$
over $\ZG$, $\lambda_p$ is the spectral radius of the $1\times 1$
matrix $(p)$. For a polynomial matrix $M=M(t)$,
we let $M(1)$ denote its
evaluation at $t=1$.
\begin{lemma} \label{zetalemma}
  Suppose $n>1$ and  $A$ is an $n\times n $   matrix
over $t\Z_+G[t]$
    with spectral radius
$\lambda >1$ and with $\As$   \gps.
Given $\epsilon >0$, there exists a positive integer
$m_0$ such that for any $d \geq m_0$
there is an $n\times n$ matrix $C$ over
$t\Z_+G[t]$ such that $I-C$
is positive equivalent
to $I-tA$ and
\[
c_{11kg} > (\lambda -\epsilon)^k\ , \ \
\text {for } m_0\leq k \leq d\ ,\
\text{ for all } g \in G \ .
\]
\end{lemma}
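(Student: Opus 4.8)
The plan is to split the statement into an exponential-growth estimate coming from Perron--Frobenius theory and a realization step carried out with positive equivalences, the latter being the technical heart. First I would pass from the polynomial matrix $A$ to the underlying $G$-primitive matrix $M:=\As$ over $\Z_+G$, using Lemma \ref{sharplemma} to tie the path structure of $M$ to any matrix positive equivalent to $I-tA$; the relevant exponential growth rate of the $G$-coefficients of the powers $M^k$ is $\lambda$. Passing to the $n|G|\times n|G|$ nonnegative integer matrix $\widetilde M$ induced by the left regular representation of $G$, the hypothesis that $M$ is \gps is exactly the statement that $\widetilde M$ is a primitive integer matrix, with Perron eigenvalue $\lambda$. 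Perron--Frobenius then yields a constant $c>0$ and an integer $k_0$ with $(\widetilde M^{\,k})_{ab}\ge c\lambda^k$ for all $k\ge k_0$ and all indices $a,b$; translating back through the map $\widetilde{(\cdot)}$, every $G$-coefficient of the $(1,1)$ entry of $M^k$ is at least $c\lambda^k$ once $k\ge k_0$.

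Second, I would fix $m_0$ before $d$ is chosen, which matches the order of quantifiers in the statement. Assuming without loss of generality $0<\epsilon<\lambda$, we have $c\lambda^k/(\lambda-\epsilon)^k = c\bigl(\lambda/(\lambda-\epsilon)\bigr)^k\to\infty$, so there is $m_0\ge k_0$ with $c\lambda^k>(\lambda-\epsilon)^k$ for every $k\ge m_0$. Since $m_0$ depends only on $\lambda$ and $\epsilon$ (through $c$ and $k_0$) and not on $d$, any construction achieving $c_{11kg}\ge c_g\bigl((M^{\,k})_{11}\bigr)$ for $m_0\le k\le d$ forces $c_{11kg}\ge c\lambda^k>(\lambda-\epsilon)^k$ on that entire range, as required.

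Third --- and this is where the real work lies --- I would produce, for each $d\ge m_0$, a positive equivalence from $I-tA$ to a matrix $I-C$ with $C$ an $n\times n$ matrix over $t\Z_+G[t]$ whose $(1,1)$ entry collects, for every $k\le d$, a copy of each closed path of length $k$ based at vertex $1$ in the $G$-labeled graph of $M$, carrying the same $G$-label; such a $C$ then satisfies $c_{11kg}\ge c_g\bigl((M^{\,k})_{11}\bigr)$ and we conclude by the previous paragraph. The mechanism is to install these return loops at vertex $1$ by the elementary (state-splitting type) moves of the positive K-theory of \cite{BW04}, truncating every resulting series at degree $d$; truncation is legitimate because only the coefficients through $t^d$ are constrained and the relevant bookkeeping is finite. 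Here \gps of $M$ is essential: since $M^{N}\gg 0$ for some $N$, vertex $1$ lies on closed paths realizing every $G$-label, in the abundance ($\sim\lambda^k$) supplied by the first step. The main obstacle is to carry out this loop installation within the rigid constraints of positive equivalence --- each intermediate matrix must remain of the form $I-(\text{matrix over }\Z_+G[t])$ with the NZC property, and the procedure must return to size $n$ rather than eliminating vertices. The compatibility of loop insertion with positivity is provided by the polynomial strong shift equivalence moves already used in Lemma \ref{sharplemma} and Proposition \ref{hms}, and verifying that these can be assembled to pump the $(1,1)$ entry up to degree $d$ while preserving positivity and the matrix size is the crux of the argument.
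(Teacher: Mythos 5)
Your Steps 1 and 2 are correct as far as they go, but the proof fails at Step 3, in two ways. First, the realization step is not actually an argument: you describe the target ($C(1,1)$ should collect a copy of every closed path of length $k\le d$ based at vertex $1$ of $M=\As$) and then concede that verifying this ``is the crux of the argument.'' That verification is the entire content of the lemma; the paper's proof spends all of its effort there, via an explicit three-stage recursion of basic positive equivalences (Stage 1: use \cite[Lemma 6.6]{BS05} to pass to a positive-equivalent matrix with no zero entry; Stage 2: a row-operation recursion pumping the $(n,n)$ entry until its own spectral radius exceeds $\lambda-\epsilon/2$; Stage 3: a second recursion pulling first-return paths through that entry into the $(1,1)$ entry, with the growth estimate supplied by Proposition \ref{looprate}, and with $m_0$ chosen only at the very end to absorb multiplicative constants).

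Second, and more seriously, your target is unachievable, so no assembly of moves can reach it. Positive equivalence preserves the SSE-$\Z_+G$ class of $C^{\diamond}$ (Theorem \ref{sseasposeq}), hence $\trace(\overline{C^{\diamond}}^{\,k})=\trace(\overline{\As}^{\,k})=\lambda^k(1+o(1))$ for all $k$. But each monomial of $C(1,1)$ becomes a first-return loop at vertex $1$ in $C^{\Box}$, and distinct concatenations of such loops are distinct closed paths, so
\[
(\overline{C^{\Box}}^{\,k})_{11}\ \ge\ [t^k]\,\Bigl(1-\textstyle\sum_j \overline{c}_{11j}t^j\Bigr)^{-1}.
\]
Your Steps 1--2 demand $c_{11kg}\ge c\lambda^k$ on the window $[m_0,d]$ with $c$ and $m_0$ fixed independently of $d$, so $\sum_j \overline{c}_{11j}t^j$ dominates $c|G|\sum_{j=m_0}^{d}(\lambda t)^j$, which already exceeds $1$ at $t=1/\lambda$ once $d-m_0+1>1/(c|G|)$. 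The concatenation series then has radius of convergence strictly less than $1/\lambda$, i.e.\ the closed-path counts of $\overline{C^{\Box}}$ grow at a rate strictly larger than $\lambda$, contradicting trace conservation. This is exactly why the lemma loses an $\epsilon$: first-return loop counts at a vertex can be pushed to grow like $(\lambda-\epsilon)^k$ over an arbitrarily long window, but never like $c\lambda^k$; and it is why $m_0$ cannot be extracted from Perron theory alone, but must be taken large enough---depending on the realization constants ($c'$, $\lambda_q$, $n_1$, $n_2$ in the paper's Stage 3)---that the entropy of the inserted loops stays below $\log\lambda$. The viable route is the paper's: first manufacture a $1\times 1$ diagonal entry whose spectral radius exceeds $\lambda-\epsilon/2$, then harvest first-return paths through it.
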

\begin{proof}
We will produce $C$ in three stages.

STAGE 1. Because $A(1)$ is $G$-primitive,
by \cite[Lemma 6.6]{BS05} there is a positive equivalence with respect to the ordered ring $(\ZG,\Z_+G)$ from $I- A(1)$ to a matrix $I-H$ such that $H$ is a
matrix over $\Z_+G$ with no zero entry. Lift this positive equivalence with respect to $(\ZG,\Z_+G)$ to a positive equivalence with respect to $(\ZG[t],\Z_+G[t])$
from $I-tA$ to a matrix $I-L$, with $L$ a matrix over $t\Z_+G[t]$ with every entry nonzero.

STAGE 2.
In this stage, given $\epsilon >0$ we produce an
$n\times n$  matrix $B$ over $t\Z_+G[t]$
with no zero entry such that $I-tA$
is $\ZG [t] $ positive equivalent to $I-B$
and  the $B(n,n)$ entry has spectral radius
greater than $\lambda -\epsilon /2$.

 For this, we define $n\times n$ matrices $B_1, B_2, \dots $ recursively.
We
set $B_1$ to be the matrix $L$ produced in Stage 1.
In block form, let $B_1=
\left( \begin{smallmatrix}
M & u \\
v & f
\end{smallmatrix}
\right)$, in which $f$ is $1\times 1$.
A matrix $B_k$ will have  a block form
\begin{equation} \label{blockform}
B_k \ =\
\begin{pmatrix}
M & u \\
v^{(k)} & f^{(k)}
\end{pmatrix} \ .
\end{equation}
Given $B_k$,
define $B_{k+1}$ by the equivalence
\[
I-B_{k+1} \ =\
\begin{pmatrix}
I & 0 \\
v^{(k)} & I
\end{pmatrix}
\begin{pmatrix}
I-M & -u \\
-v^{(k)} & 1- f^{(k)}
\end{pmatrix}
\ = \
\begin{pmatrix}
I-M & -u \\
-v^{(k)}M & 1- f^{(k)} -v^{(k)}u
\end{pmatrix} \ .
\]
This defines a positive equivalence
from $I-B_k$ to $I-B_{k+1}$.
 By induction, for all $k$, $B_k$ is in $\mathcal M$
and has no zero entry; $B_k^{\Box}$ is  \gps; and
$B_{k+1}$ is a matrix over
$t\Z_+G [t]$ with block form
\[
B_{k+1} \ =\
\begin{pmatrix}
M & u \\
vM^k & f+ v(I+M+\cdots + M^{k-1})u
\end{pmatrix} \ .
\]

Because $A$ is \gps , by the condition (3) in Theorem \ref{zgperron} we have a positive real number $c$  such that $\trace (A^j) > c\lambda^j (g_1+\cdots + g_m)$
for all large $j$.
Because $M^{\Box}$ is a proper principal submatrix of the
\gps  matrix $(B_1)^{\Box}$, which has spectral
radius $\lambda$,  we have
$\lambda_M < \lambda$.
Choose $\delta >0$ such that $\delta < \epsilon /2$ and
$\lambda_M <\lambda - \delta $.
For all large $j$,
\[
\trace(M^j ) < (\lambda -\delta)^j(g_1+\dots +g_m) \ .
\]
Because $M^k$ has entries in $t^k\ZG[t]$
and $u$ has entries in $t\ZG [t]$,  if
$j\leq k$ then
\[
\trace\big((A^{\Box})^j \big)
\ = \
\trace\big((M^{\Box})^j\big)
+\trace\big((f^{(k)})^{\Box})^j\big)
\ .
\]
It follows that for all large $k$, for $j\in \{k-1,k\}$,
\begin{equation}\label{sharpiebound}
\trace\big(((f^{(k)})^{\Box})^j\big)
\geq (\lambda - \delta )^j(g_1+ \cdots + g_m) \ .
\end{equation}
Consequently, $f^{(k)}$ is \gps
for all large $k$.

Let $\lambda^{(k)}$ be the spectral radius of
$(f^{(k)})$.
Let $d$ be the maximum degree of an
entry of $B_1$.  From the block form
\eqref{blockform} we see that $f^{(k)}$
has degree at most $dk$. Then
by Proposition \ref{sizematters}, we can use a
version of
$p^{\Box}$ which is a
$dk \times dk$ matrix $Q$ over $\Z_+G$.
Then
for $q=dk/m$,
the matrix $\overline Q$ is $q\times q$
over $\Z_+$ with spectral radius
$\lambda_Q=\lambda^{(k)}$.
Using \eqref{sharpiebound}, we have
\[
\lambda^{(k)}=\lambda_Q
\geq
\Big( \frac 1q \trace (Q^k)
\Big) ^{1/k}
\geq
\Big( \frac {m}{dk}
(\lambda-\delta)^km)
\Big) ^{1/k} \ .
\]
Because $0 <\delta < \epsilon /2$,
It follows that
$\lambda^{(k)} >  \lambda -\epsilon /2  $
for all
 large $k$.

STAGE 3.
We define $n\times n$ matrices $P_1, P_2, \dots $
over $t\Z_+[t]$
recursively. The recursive step is the same as in Stage 2, but
with row 1 in Stage 3 playing the role of row $n$ in Stage 2.
In block form, we write
$P_1=\left( \begin{smallmatrix}
s & w \\
x & Q
\end{smallmatrix} \right)
$, with $s$ being $1\times 1$.
We take
$P_1=B$ from  Stage 2 and set
$q=P_1(n,n)$. The $1\times 1$ matrix $\begin{pmatrix} q\end{pmatrix}$ has $q^{\Box}$  \gps  with spectral radius $\lambda_q$ such that
$0< \lambda - \lambda_q < \epsilon /2$ .

A matrix $P_k$ will have  a block form
\[
P_k\ =\
\begin{pmatrix}
s^{(k)} & w^{(k)} \\
x & Q
\end{pmatrix}
\]
and given $P_k$ we define $P_{k+1}$  by
\[
I-P_{k+1} \ =\
\begin{pmatrix}
1 & w^{(k)} \\
0 & I
\end{pmatrix}
\begin{pmatrix}
1-s^{(k)} & -w^{(k)} \\
-x & I-Q
\end{pmatrix}
=
\begin{pmatrix}
1-s^{(k)}-w^{(k)}x & -w^{(k)}Q \\
-x & I-Q
\end{pmatrix} \ .
\]
By induction,
\[
P_{k+1} \ =\
\begin{pmatrix}
s+w(I+Q+\cdots +Q^{k-1})x & wQ^k \\
x & Q
\end{pmatrix}
\]
and  $q=P_{k+1}(n,n)$.
As in Proposition \ref{looprate}  , let $(\tau_j)$ be the sequence from $\Z_+G$
such that
\[
 \sum_{k=1}^{\infty} q^k = \ \sum_{j=1}^{\infty} \tau_j t^j \ .
\]
Appealing to
Proposition \ref{looprate}, choose positive $c',d'$ such that
$\tau_j > c'(\lambda_q)^j (g_1+\cdots +g_m)$ for all $j\geq d'$.
Pick  $g,h$ in $G$ and
positive integers $n_1,n_2$ satisfying
$gt^{n_1}\leq P_1(1,n)$ and
$ht^{n_2}\leq P_1(n,1)$.
Then for $k> d'$,
\begin{align*}
P_{k+1}(1,1) \ &\geq\ P_1(1,n)\ (Q(n,n))^{k-1}\ P_1(n,1) \\
&\geq \ gt^{n_1}\ \Big(
\sum_{j=d'}^{k-1}c'(\lambda_q)^j(g_1+\cdots + g_m)t^j\Big)\ ht^{n_2} \\
&\geq\
\sum_{j=d'+n_1+n_2}^{k+n_1+n_2-1}
\Big(\frac{c'}{(\lambda_q)^{n_1+n_2}}\Big)
(\lambda_q)^j (g_1+\cdots + g_m)t^j \\
&> \
\sum_{j=d'+n_1+n_2}^{k+n_1+n_2-1}
\Big(\frac{c'}{(\lambda_q)^{n_1+n_2}}\Big)
\Big(\lambda - \frac{\epsilon}2\Big)^j (g_1+\cdots + g_m)t^j
\ .
\end{align*}
Let $m_0$ be the smallest $j$ such that
$j\geq d'+n_1+n_2$ and
\[
\Big(\frac{c'}{(\lambda_q)^{n_1+n_2}}\Big)
\Big(\lambda - \frac{\epsilon}2\Big)^j
\ >\ (\lambda -\epsilon)^j \ .
\]
Then given $d\geq m_0$, for $k=d$ we have
$P_k(1,1) > \sum_{j=m_0}^d (\lambda -\epsilon)^j (g_1+\cdots + g_m)t^j$ .
This finishes the proof of the lemma.
\end{proof}

\begin{proposition} \label{sseembed}
  Suppose $A$ is an $n\times n$ \gps  matrix over $\Z_+G$,  $n>1$
and $1< \beta < \lambda_A$. Then there is a positive integer $r_0$ such that the following holds. If $r\geq r_0$ and
 $I-Q$ is a matrix
in $\gl (k, \Z [t])$ such that
\begin{enumeratei}
\item
$|q_{ijsg}| \leq \beta^s$ for all $i,j,s,g$, and
\item
$Q\in \mathcal M(t^r\Z[t])$
\end{enumeratei}
then the matrix
$\begin{pmatrix} I-Q & 0 \\ 0 & I-tA
\end{pmatrix} $
is $\el (\Z G[t])$ equivalent to an
$(m+k)\times (m+k)$ matrix $I-B$ over $\Z G[t]$
such that

\begin{enumerate}
\item
$B$ has entries in $t\Z_+ G[t]$
\item
$B^{\Box}$ is \gps
\item
if
$\overline Q=0$, then
$\overline B^{\Box} $ is SSE over $\Z_+$ to $\overline A$.
\end{enumerate}
\end{proposition}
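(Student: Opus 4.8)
The plan is to use Lemma \ref{zetalemma} to make the given block $I-tA$ ``large and positive'' in a single corner, so that it can absorb the small perturbation $I-Q$ while keeping the off-diagonal blocks positive. First I would apply Lemma \ref{zetalemma}: since $A$ is \gps with spectral radius $\lambda_A>1$ and $\beta<\lambda_A$, I can choose $\epsilon>0$ with $\lambda_A-\epsilon>\beta$, then replace $I-tA$ by a positive-equivalent matrix $I-C$ (with $C$ over $t\Z_+G[t]$, $C^{\Box}$ still \gps) whose $(1,1)$ entry satisfies $c_{11kg}>(\lambda_A-\epsilon)^k>\beta^k$ for all $k$ in the relevant range $m_0\le k\le d$ and all $g\in G$. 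Replacing the block $I-tA$ by the positive-equivalent $I-C$ is harmless because positive equivalence and $\el(\ZG[t])$ equivalence of the block-diagonal matrix are unaffected. This is the key preparatory step: the lemma guarantees that the single corner entry of $C$ dominates $\beta^s$ in every group coordinate.

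Next I would perform the ``absorption.'' The matrix $I-Q$ has entries in $t^r\Z[t]$ with coefficients bounded by $\beta^s$ (hypotheses (i),(ii)), and lives in a $k\times k$ block disjoint from the $n\times n$ block $I-C$. I would use basic elementary operations over $\ZG[t]$ (choosing $r_0$ large, and using $r\ge r_0$ so that the degrees involved lie past $m_0$) to move the $Q$-entries into the combined matrix by adding multiples of the dominant row/column of $I-C$ to the $Q$-block and vice versa. Concretely, each entry $q_{ij}$ of $Q$, being a $\Z[t]$ polynomial with coefficients bounded by $\beta^s$ and degrees $\ge r$, can be rewritten as a $\Z G[t]$ combination absorbed against the corner entry $c_{11}$, whose coefficients exceed $\beta^s$ coordinatewise; the point of the coordinatewise domination over all $g\in G$ is exactly that after the elementary moves every resulting entry of $B$ has all its $\ZG$-coefficients nonnegative, i.e.\ lands in $t\Z_+G[t]$. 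This yields conclusion (1). For (2), since $C^{\Box}$ is already \gps and the absorption only adds further positive mass linking the $Q$-indices into the \gps block, a short argument shows $B^{\Box}$ is \gps as well.

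For conclusion (3), I would track the augmentation $\overline{\,\cdot\,}$ through the whole construction. When $\overline Q=0$, the $Q$-block augments to $I$, so the block-diagonal starting matrix augments to $I\oplus(I-t\overline A)$; the elementary moves push down under $\overline{\,\cdot\,}$ to elementary (hence positive, by the degree/sign bookkeeping) equivalences over $\Z[t]$, giving that $I-\overline B$ is positive equivalent over $\Z[t]$ to $I-t\overline A$. Then Theorem \ref{sseasposeq} (applied over the trivial group, i.e.\ for $\Z_+$) identifies $\overline B^{\Box}$ with $\overline A$ up to SSE over $\Z_+$. The main obstacle I anticipate is step two: I must choose $r_0$ (and invoke the range $m_0\le k\le d$ of Lemma \ref{zetalemma} with $d$ large enough to cover the degrees appearing in $Q$) so that \emph{every} elementary absorption leaves the matrix inside $\mathcal M$ and keeps all coefficients nonnegative simultaneously. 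Controlling the sign and NZC constraints across all the simultaneous moves — rather than for a single entry — is the delicate bookkeeping that the coordinatewise bound $c_{11kg}>\beta^k$ from Lemma \ref{zetalemma} is designed to make possible.
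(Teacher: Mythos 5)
Your opening move (applying Lemma \ref{zetalemma} with $\epsilon$ chosen so that $\lambda_A-\epsilon>\beta$, to replace $I-tA$ by a positive-equivalent $I-C$ with a dominant $(1,1)$ entry) matches the paper, but the heart of your argument --- the ``absorption'' of $I-Q$ by elementary operations --- has a genuine gap, and a symptom of it is that you never use the hypothesis that $I-Q$ is invertible over $\Z[t]$. The problem is that you cannot populate the connecting blocks by polynomial row or column operations: adding $p$ times a row of the $C$-block to a row of the $Q$-block turns the corresponding entry of the candidate $B$ into $pc_{jj'}-p\delta_{jj'}$, and since every term of $pc_{jj}$ has strictly higher degree than the lowest term of $p$, that lowest term survives with a negative sign no matter how large the coefficients of $c_{11}$ are; the coordinatewise bound $c_{11kg}>\beta^k$ cannot repair a degree mismatch. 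The paper's mechanism is structurally different: it first proves, by an explicit Whitehead-lemma computation using $(I-Q)^{\pm 1}$ (this is where invertibility enters), that $\left(\begin{smallmatrix} I-Q & -X \\ 0 & I-C \end{smallmatrix}\right)$ is $\el(\ZG[t])$ equivalent to $(I-Q)\oplus(I-tA)$ for an \emph{arbitrary} block $X$; it then chooses $X$ to be the manifestly positive block with entries $\alpha=\sum_{m=r}^{d}\lceil\beta^m\rceil u\, t^m$ in one column, and finally conjugates by a single elementary matrix $V$, setting $B=V^{-1}HV$. The dominance conditions then make every entry of $B$ (namely $q_{ij}+\alpha$, $\alpha$, $x-\eta_j$ with $x=c_{11}-k\alpha$ and $\eta_j=q_{1j}+\cdots+q_{kj}$, and the $c_{i1}$) lie in $t\Z_+G[t]$, and the conjugation creates connections in \emph{both} directions between the two index sets, which is what yields $G$-primitivity of $B^{\Box}$. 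Without something playing the role of the Whitehead step and the explicit conjugation, your step two does not go through.

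Your argument for conclusion (3) also fails. Pushing the chain of moves down under augmentation gives only an \emph{elementary} equivalence over $\Z[t]$, and elementary equivalence is strictly weaker than positive equivalence --- that distinction is the whole point of the positive $K$-theory framework; by Theorem \ref{sseaseq} it would only give that $\overline{B}^{\Box}$ and $\overline{A}$ are SSE over $\Z$, not over $\Z_+$. Nor can the ``sign bookkeeping'' be rescued for this particular chain: the Whitehead step involves $(I-Q)^{\pm1}$, and its factorization into basic elementary matrices augments (even when $\overline{Q}=0$) to nontrivial constant elementary matrices whose intermediate products leave the class of matrices $I-M$ with $M\in\nzc(\Z_+[t])$. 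The paper avoids this entirely: when $\overline{Q}=0$ it observes $\overline{B}=\overline{H'}$, where $H'$ is $H$ with the $q_{ij}$ and $\eta_j$ set to zero, exhibits an explicit one-step SSE over $\Z_+G[t]$ between $H'$ and $C$ via the factorization $C=\begin{pmatrix} Y & I_n\end{pmatrix}\begin{pmatrix} X\\ D\end{pmatrix}$, $H'=\begin{pmatrix} X\\ D\end{pmatrix}\begin{pmatrix} Y & I_n\end{pmatrix}$, and then invokes Proposition \ref{hms} to conclude $\overline{B}^{\Box}$ is SSE over $\Z_+$ to $\overline{A}$. You need a substitute of that kind --- a direct positive or SSE argument over $\Z_+[t]$ --- rather than an appeal to the augmented elementary chain.
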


\begin{proof}
We use $\sim$ to denote $\EL (\ZG [t])$ equivalence.
First, note that if $I-F$ is a matrix over $\Z G[t]$
with block form
$I-F=
\begin{pmatrix} I-Q & -X \\ 0 & I-C
\end{pmatrix}
$
such that $I-C \sim I-tA$, then
the invertibility of $I-Q$ implies
$I-F \sim
\begin{pmatrix} I-Q & 0 \\ 0 & I-tA
\end{pmatrix} = I-(Q\oplus tA)$, since
\begin{align*}
\begin{pmatrix} I-Q & -X & 0\\ 0 & I-C & 0 \\ 0 &0 & I
\end{pmatrix}
&
\begin{pmatrix} (I-Q)^{-1} & 0 & 0\\ 0 & I & 0 \\ 0 &0 & (I-Q)
\end{pmatrix}
\begin{pmatrix} I & X & 0\\ 0 & I & 0 \\ 0 &0 & I
\end{pmatrix}
\begin{pmatrix} I-Q & 0 & 0\\ 0 & I & 0 \\ 0 &0 & (I-Q)^{-1}
\end{pmatrix} \\
= \ &
\begin{pmatrix} I-Q & 0 & 0\\ 0 & I-C & 0 \\ 0 &0 & I
\end{pmatrix} \ .
\end{align*}
Next, given $\beta$, let $\epsilon = (\lambda_{A} - \beta)/2$ and
let $m_o$ be the integer of the conclusion of Lemma
\ref{zetalemma} given  $A$ and $\epsilon$.
Suppose $I-Q\in \gl (k,\Z[t])$ and $Q$ satisfies
(i) and (ii).
Pick $r\geq m_0$ such that for all
$s\geq r$, $(\lambda_{A} - \epsilon)^s > 2k\lceil \beta^s\rceil +1$ .
Let $d$
be an integer such that $d>r$ and $d\geq \text{degree}(Q)$.
Now take $I-C$ from Lemma \ref{zetalemma}, positive equivalent
to $I-tA$, such that
\[
c_{11gm} \geq  2k \lceil \beta^m \rceil +1\ , \
\text{ for } r\leq m\leq d\  \text{ and for all } g \ .
\]
Let $u= \sum_g g$. Let $\alpha =
\sum_{m=r}^d \lceil \beta^m\rceil u t^m$ .
Consider a matrix in block form,
\[
H=
\begin{pmatrix} Q & X\\ 0 & C
\end{pmatrix} \
= \
\left(
\begin{array}{cccc|cccc}
q_{11} &q_{12} & \cdots &q_{1k}& \alpha & 0  & \cdots & 0 \\
q_{21} &q_{22} & \cdots &q_{2k}& \alpha & 0  & \cdots & 0 \\
\vdots &      &       &\vdots & \vdots &    &        &\vdots  \\
q_{k1} & q_{k2} & \cdots &q_{kk}& \alpha & 0  & \cdots & 0 \\ \hline
0  & 0  & \cdots  & 0        & c_{11}  & c_{12}&\cdots & c_{1n}  \\
0  & 0  & \cdots  & 0        & c_{21}  & c_{22}&\cdots & c_{2n}  \\
\vdots &&        &\vdots   &\vdots    &       &       &\vdots \\
0  & 0  & \cdots  & 0        & c_{n1}  & c_{n2}&\cdots & c_{nn}
\end{array}
\right)
\  .
\]
Define a matrix $V$ with matching block structure,
$
V=
\begin{pmatrix}
I_k  & 0 \\
Y  & I_{n}
\end{pmatrix}
$,
in which the top row of $Y$ has every entry 1 and the other
entries of $Y$ are zero, and
in which
$I_j$ as usual  denotes a $j\times j$ identity matrix.
Define $B=V^{-1}HV $. We have
\begin{equation} \label{hsim}
B =
\left(
\begin{array}{cccc|cccc}
q_{11}+\alpha &q_{12}+\alpha  & \cdots &q_{1k}+\alpha & \alpha & 0  & \cdots & 0 \\
q_{21}+\alpha  &q_{22}+\alpha & \cdots &q_{2k}+\alpha & \alpha & 0  & \cdots & 0 \\
\vdots &      &       &\vdots & \vdots &    &        &\vdots  \\
q_{k1}+\alpha  & q_{k2}+\alpha  & \cdots &q_{kk}+\alpha & \alpha & 0  & \cdots & 0 \\ \hline
x-\eta_1  & x-\eta_2  & \cdots  & x-\eta_k         & x  & c_{12}&\cdots & c_{1n}  \\

c_{21}  & c_{21}  & \cdots  & c_{21}
& c_{21}  & c_{22}&\cdots & c_{2n}  \\
\vdots &&        &\vdots   &\vdots    &       &       &\vdots \\
c_{n1}  & c_{n1}  & \cdots  & c_{n1}
& c_{n1}  & c_{n2}&\cdots & c_{nn}
\end{array}
\right)\
\end{equation}
in which $x=c_{11}-k\alpha$
and $\eta_j=q_{1j}+q_{2j}+\cdots + q_{kj}$. Then
$x \geq (k+1)u(t^d + \cdots + t^r)$,
$x-\eta_j \geq u(t^d + \cdots + t^r)$ and
$q_{ij}+\alpha \geq 0$. Because $x$ is \gps  and
$C$ is \gpscomma\    it follows easily that $B$ is $G$-primitive.
 Also, since
$I-B=V^{-1}(I-H)V$, the matrix $I-B$ is $\el (\Z G[t])$ equivalent
to $I-H$, and therefore to $I-t(Q\oplus C)$.

Finally, suppose $\overline Q=0$.
We must show
$\overline{B^{\Box}}$
is SSE over $\Z_+$ to $ A$.
Clearly $A$ and $\overline{C^{\Box}}$
are SSE over $\Z_+$.
The matrices $B$ and $C$ have all entries
  in $t\Z_+[t]$. Thus by
  Remark \ref{sharpremark},
  $\overline{B^{\Box}}=
\overline{B^{\Box}}$
and
$\overline{C^{\Box}}
=\overline{C}^{\Box}$.
Therefore  it suffices to show that $\overline{B}^{\Box}$ and
$\overline{C}^{\Box}$
are SSE over $\Z_+$.
By Proposition \ref{hms}, this will follow if we show
$\overline B $ is SSE over $\Z_+[t]$ to $\overline C$.

Because $\overline Q=0$,
we have $\overline{B}=\overline{H'}$,
 where $H'$ is the matrix obtained from $H$ by replacing
the entries $q_{ij}$ and $\eta_j$
in the display \eqref{hsim}
with zero. Let $D$ be the lower right hand block of the
$2\times 2$ block matrix $B$.
$H'$ is SSE over $\Z_+G[t]$ to
$C$, since
\[
C=
\begin{pmatrix} Y & I_n
\end{pmatrix} \
\begin{pmatrix}  X\\  D
\end{pmatrix} \
\quad \text{and} \quad
H'=
\begin{pmatrix}  X\\  D
\end{pmatrix} \
\begin{pmatrix} Y & I_n
\end{pmatrix} \ .
\]
Therefore
$\overline{B}=\overline{H'}$ is SSE over $\Z_+[t]$ to
$\overline C$. This finishes the proof.
\end{proof}

\begin{lemma} \label{amalglemma}
Suppose $G$ is a finite group, $N$ is nilpotent $n\times n$
over $\ZG$ and $r\in \N$.  Then
there is a matrix $M_r$ over $t^r\ZG [t]$
such that $\overline M_r  =0$ and
$I-M_r$
is $\el (n,\ZG[t])$-equivalent  to $I-t^rN$.
Given $N$, the matrices $M_r$ can be chosen
such that the coefficients of all entries
are bounded above independent of  $r$.
\end{lemma}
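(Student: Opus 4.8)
The plan is to first arrange that the augmentation $\overline N$ is strictly upper triangular, and then to clear $I-t^rN$ by elementary operations over $\ZG[t]$ that lift, multiplier by multiplier, a fixed reduction of $I-t^r\overline N$ to $I$ over $\Z[t]$. For the first step I would use the elementary fact that a nilpotent matrix over a PID is conjugate over that PID to a strictly upper triangular matrix: since $\overline N$ is nilpotent over $\Z$, the saturation $V$ of $\mathrm{im}(\overline N)$ in $\Z^n$ is a proper $\overline N$-invariant direct summand containing $\mathrm{im}(\overline N)$, and inducting on $\mathrm{rank}(V)$ produces a $\Z$-basis in which $\overline N$ is strictly upper triangular. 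Adjusting one basis vector by a sign, the change of basis $P$ may be taken in $\SL_n(\Z)=\el_n(\Z)$. Lifting $P$ to the $\ZG$-matrix $Pe$ (apply the integer matrix $P$ with scalar $e$) gives $Pe\in\el_n(\ZG[t])$, and the conjugation $(Pe)(I-t^rN)(P^{-1}e)=I-t^rN'$ replaces $N$ by the constant $\ZG$-matrix $N'=PNP^{-1}$ with $\overline{N'}=P\overline N P^{-1}$ strictly upper triangular. This step inflates coefficient sizes only by a factor depending on $P$ and $N$, hence independent of $r$; so after renaming I may assume $\overline N=:T$ is strictly upper triangular.

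For the second step, over $\Z[t]$ the matrix $I-t^rT$ is unipotent upper triangular, hence reducible to $I$ by a fixed sequence of column operations, each adding a $\Z[t]$-multiple of one column to another. I would run the identical sequence over $\ZG[t]$ on $I-t^rN$, using for each operation the same integer multiplier scaled by $e$, so that the corresponding $V_i\in\el_n(\ZG[t])$ augments to the chosen operation over $\Z[t]$. Every multiplier used is divisible by $t^r$ and is applied to a column of $I-t^rN$ whose entries lie in $\{1\}\cup t^r\ZG[t]$; hence every quantity added has $t$-degree at least $r$, and the product $(I-t^rN)V_0=I-M_r$ (where $V_0=\prod_i V_i$) has all off-diagonal entries of $M_r$ in $t^r\ZG[t]$ and diagonal entries of the form $1$ plus terms of $t$-degree at least $r$. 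Since the augmented operations carry $I-t^rT$ to $I$, the augmentation of the result is $I$, that is $\overline{M_r}=0$. Setting $U=Pe$ and $V=(P^{-1}e)V_0$ then exhibits $I-M_r$ as $\el_n(\ZG[t])$-equivalent to $I-t^rN$, as required.

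For the uniform bound on coefficients, note that the number of column operations, the integer multipliers, and the $\ZG$-entries appearing are all determined by $T$ and $N'$ through the fixed reduction and do not depend on $r$; only the $t$-degrees (multiples of $r$, up to roughly $nr$) grow with $r$, while each $\ZG$-coefficient is a bounded $\Z$-combination of products of at most $n$ entries of $N$ and $T$. Thus the coefficients of the entries of $M_r$ are bounded above independently of $r$. The step I expect to demand the most care is the bookkeeping in the clearing stage---simultaneously checking that the lifted operations trivialize the augmentation, that every newly created entry lands in $t$-degree at least $r$, and that the outcome is genuinely of the form $I-M_r$; the triangularization itself is routine once the PID fact is in place.
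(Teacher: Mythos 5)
Your proof is correct and takes essentially the same route as the paper's: both conjugate by a matrix in $\SL_n(\Z)=\EL_n(\Z)$ (lifted into $\ZG$ via the identity element $e$) so that the augmentation becomes strictly upper triangular, and then multiply $I-t^rN$ by a product of elementary matrices over $\Z[t]\subset\ZG[t]$ equal to the inverse of the unipotent matrix $I-t^r\overline{N}_{\mathrm{triang}}$, so that the augmentation of the result is the identity while every created entry lies in $t^r\ZG[t]$. The only cosmetic difference is that the paper writes this clearing matrix explicitly as a product of row-operation matrices acting on the left, whereas you describe a lifted sequence of column operations acting on the right; the boundedness claim is handled identically in both.
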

\begin{proof}

Suppose $N$ is $n\times n$. Because $\overline N$ is nilpotent over $\Z$,
we can take $U$ in $\SL_n(\Z )=\EL_n(\Z )$ such that the matrix
$N_1=U^{-1}\overline N U $ is  upper triangular with zero diagonal.
Given $r$, for $1\leq i<n$, let $W$ be $n\times n$ with
$W(i,j) = -t^rN_1(i,j)$ if $i<j$ and $W=I$ otherwise.
Set $W= W_1W_2\cdots W_{n-1}$; then
$W \in \EL (n,\ZG [t])$ and
$\overline{W(I-t^rN_1)}=I$.
Let $M_r$ be the matrix over $t\ZG [t]$
such that $I-M_r = WU^{-1}(I-t^rN)U
=W(I-t^rN_1)$.
Then $I-\overline M_r= \overline{I-M_r}
=\overline{W(I-t^rN_1)} =I$, so $\overline M_r = 0$.
The boundedness claim is clear from the construction.
\end{proof}

\begin{lemma}
\label{suspend}
Suppose $G$ is a finite group and $A$ is a \gps
matrix with spectral radius $\lambda>1$ and
$N$ is nilpotent over $\ZG$. Then for all sufficently
large $r$ in $\N$, the matrix
$\begin{pmatrix}
I-tA & 0 \\
0 & I-t^{r} N
\end{pmatrix}$
is $\el (\ZG[t])$-equivalent to a
matrix $I-B$ such that
$B$ has entries in $t\ZG_+[t]$
and $B^{\Box}$ is \gps  and
$\overline{B^{\Box}}$ is SSE over $\Z_+$ to $\overline A$.
\end{lemma}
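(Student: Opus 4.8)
The plan is to replace the nilpotent block $I-t^{r}N$ by an $\el(\ZG[t])$-equivalent block whose augmentation vanishes and whose coefficients stay bounded as $r\to\infty$, and then to feed this block into Proposition \ref{sseembed}. First I would dispose of a trivial size issue. If $A$ is $1\times 1$, a single state-splitting replaces it by a \gp\ matrix $A'$ of size $2$ that is SSE over $\Z_+G$ to $A$ (state-splitting preserves $G$-primitivity and spectral radius); by Theorem \ref{sseaseq}, and since SSE over $\Z_+G$ implies SSE over $\ZG$, the matrices $I-tA$ and $I-tA'$ are then $\el(\ZG[t])$-equivalent, while $\overline{A'}$ is SSE over $\Z_+$ to $\overline A$. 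By transitivity of these relations, proving the lemma for $A'$ yields it for $A$, so we may assume $A$ is $n\times n$ with $n>1$, as Proposition \ref{sseembed} requires.

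Fix $\beta$ with $1<\beta<\lambda_A$ and let $r_0$ be the integer produced by Proposition \ref{sseembed} for this $A$ and $\beta$. Apply Lemma \ref{amalglemma} to $N$: for each $r$ it yields a matrix $M_r$ over $t^{r}\ZG[t]$ with $\overline{M_r}=0$ such that $I-M_r$ is $\el(\ZG[t])$-equivalent to $I-t^{r}N$, together with a constant $K$, depending only on $N$, bounding every integer coefficient of every entry of every $M_r$. Since $N$ is nilpotent, $I-t^{r}N$ is invertible over $\ZG[t]$ (its inverse is the finite sum $I+t^{r}N+t^{2r}N^{2}+\cdots$), so $I-M_r$ is invertible as well.

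Next I would verify that $Q:=M_r$ meets the hypotheses of Proposition \ref{sseembed} once $r$ is large. The NZC/degree condition holds because $M_r$ has entries in $t^{r}\ZG[t]$. For the coefficient condition we must bound $|q_{ijsg}|$ by $\beta^{s}$: for $s<r$ the coefficient is $0$, and for $s\ge r$ it is at most $K$, so the bound holds as soon as $\beta^{r}\ge K$. Therefore, for every $r\ge\max\{r_0,\ \log_{\beta}K\}$ --- that is, for all sufficiently large $r$ --- Proposition \ref{sseembed} applies to $\bigl(\begin{smallmatrix} I-M_r & 0 \\ 0 & I-tA\end{smallmatrix}\bigr)$ and yields a matrix $I-B$ over $\ZG[t]$, $\el(\ZG[t])$-equivalent to it, such that $B$ has entries in $t\Z_+G[t]$ and $\Bs$ is \gp; and because $\overline{M_r}=0$, conclusion (3) of that proposition gives that $\overline{\Bs}$ is SSE over $\Z_+$ to $\overline A$.

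Finally I would splice the blocks together. Applying the equivalence of Lemma \ref{amalglemma} to the lower block and then reordering the two diagonal summands (an $\el(\ZG[t])$-equivalence, realized for equal-sized blocks by conjugation with $\bigl(\begin{smallmatrix}0&I\\-I&0\end{smallmatrix}\bigr)\in\el$, and in general after a harmless stabilization), we obtain
\[
\begin{pmatrix} I-tA & 0 \\ 0 & I-t^{r}N\end{pmatrix}
\ \sim\
\begin{pmatrix} I-tA & 0 \\ 0 & I-M_r\end{pmatrix}
\ \sim\
\begin{pmatrix} I-M_r & 0 \\ 0 & I-tA\end{pmatrix}
\ \sim\ I-B ,
\]
where $\sim$ denotes $\el(\ZG[t])$-equivalence. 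This is exactly the assertion of the lemma. The one genuinely delicate point is the interplay between Lemma \ref{amalglemma} and Proposition \ref{sseembed}: $M_r$ is forced into $t^{r}\ZG[t]$, and it is precisely the \emph{uniform} coefficient bound $K$ --- independent of $r$ --- that lets the entries of $M_r$ fall under the geometric threshold $\beta^{s}$ demanded by Proposition \ref{sseembed}, provided $r$ is taken large enough that $\beta^{r}\ge K$. Everything else is bookkeeping with equivalences already established.
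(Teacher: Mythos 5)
Your proposal is correct and takes essentially the same route as the paper's proof: fix $\beta\in(1,\lambda)$, take $r_0$ from Proposition \ref{sseembed}, use the uniformly bounded family $\{M_r\}$ from Lemma \ref{amalglemma} so that for all large $r$ the matrix $Q=M_r$ satisfies the coefficient bound $|q_{ijsg}|\le\beta^s$ and is invertible because $N$ is nilpotent, then apply Proposition \ref{sseembed}. Your extra steps --- reducing the $1\times 1$ case of $A$ to $n>1$ by a state splitting (needed since Proposition \ref{sseembed} assumes $n>1$) and the explicit splicing and reordering of the block equivalences --- are points the paper's proof leaves implicit, but they are refinements of the same argument rather than a different approach.
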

\begin{proof}
Pick $\beta$ such that
$1< \beta < \lambda$. Let $r_0$ be the integer of
Proposition \ref{sseembed}, which depends on
$A$ and $\beta$. Let $\{M_r\}$ be the uniformly bounded family
given for $\{t^rN\}$ by Lemma
\ref{amalglemma}.
Then for all large $r\in \mathbb N$, $r\geq r_0$ and the matrix $Q= M_r$
satisfies $|q_{ijs}|\leq \beta^s$ for all $i,j,g,s$. Because $t^rN$ is nilpotent, the
matrix $I-t^rN$ is invertible over $\ZG [t]$. Now Lemma \ref{suspend} follows from Proposition \ref{sseembed}.
\end{proof}
Given $r \in \mathbb{N}$, define $V_{r}:NK_{1}(\mathbb{Z}G) \to NK_{1}(\mathbb{Z}G)$ by $V_{r}:[I-tN] \mapsto [I-t^{r}N]$, and $F_{r}:NK_{1}(\mathbb{Z}G) \to NK_{1}(\mathbb{Z}G)$ by $F_{r}:[I-tN] \mapsto[I-tN^{r}]$. The map $V_{r}$ is often called the Verschiebung operator, and $F_{r}$ the Frobenius operator.
\begin{lemma}
\label{VFoperations}
Let $G$ be a finite group and $r \in \mathbb{N}$ be such that $r$ and $|G|$ are relatively prime. Then the map $V_{r}:NK_{1}(\mathbb{Z}G) \to NK_{1}(\mathbb{Z}G)$ is injective.
\begin{proof}
One may check directly that $F_{r}V_{r}(x) = rx$ for all $x \in NK_{1}(\mathbb{Z}G)$. By a result of Weibel \cite[6.5, p. 490]{WeibelMayer}, the order of every element in $NK_{1}(\mathbb{Z}G)$ must be a power of $|G|$. Thus the map $F_{r}V_{r}$ is injective for $r$ relatively prime to $|G|$, and $V_{r}$ is as well.
\end{proof}
\end{lemma}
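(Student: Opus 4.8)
The plan is to reduce injectivity of $V_r$ to an order-counting argument by combining a Verschiebung--Frobenius identity with Weibel's torsion bound. First I would establish that the composite $F_r V_r$ acts on $\nkone(\ZG)$ as multiplication by $r$, i.e. $F_r V_r(x) = rx$ for all $x$. This is the analogue for $\nkone$ of the classical relation $F_n V_n = n\cdot\mathrm{id}$ in the ring of big Witt vectors, and the cleanest route is to verify it directly on the additive generators $[I-tN]$ with $N$ nilpotent: here $V_r$ produces $[I-t^rN]$, and applying $F_r$ returns an expression whose class equals $r[I-tN]$. Conceptually this reflects that the substitution $t\mapsto t^r$ underlying $V_r$ exhibits $\ZG[t]$ as a free module of rank $r$ over the image subring, so that $F_r$ composed with $V_r$ multiplies by that rank.

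Next I would invoke Weibel's theorem \cite[6.5, p.~490]{WeibelMayer}, which asserts that every element of $\nkone(\ZG)$ has order equal to a power of $|G|$; in particular $\nkone(\ZG)$ is torsion and the order of any element is divisible only by primes dividing $|G|$. With this in hand the conclusion is a one-line argument: if $x\in\nkone(\ZG)$ satisfies $V_r(x)=0$, then $rx = F_r V_r(x) = 0$, so the order of $x$ divides $r$; but that order is a power of $|G|$, and since $\gcd(r,|G|)=1$ the only possibility is that $x$ has order $1$, whence $x=0$. Equivalently, multiplication by $r$ is injective on a group all of whose elements have $|G|$-power order, so the injectivity of $F_r V_r = r\cdot\mathrm{id}$ forces $V_r$ to be injective.

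The main obstacle is establishing the identity $F_r V_r = r\cdot\mathrm{id}$ itself. Although it is ``standard'' in the Witt-vector formalism, some care is needed to see that the operators $V_r$ and $F_r$ \emph{as defined here} on the explicit generators $[I-tN]$ really obey it, since applying $F_r$ to $[I-t^rN]$ requires first rewriting this class in the normalized form $[I-tM]$ before the rule $N\mapsto N^r$ can be read off. Once this bookkeeping is handled, everything else is the routine order computation above.
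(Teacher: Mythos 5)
Your proposal is correct and follows essentially the same approach as the paper: you verify the relation $F_rV_r = r\cdot\mathrm{id}$ on $\nkone(\ZG)$, invoke Weibel's result that every element of $\nkone(\ZG)$ has order a power of $|G|$, and conclude that multiplication by $r$ is injective when $\gcd(r,|G|)=1$, whence $V_r$ is injective. The paper's proof is exactly this three-line argument, with the identity $F_rV_r(x)=rx$ stated as something ``one may check directly''; your additional remark about the bookkeeping needed to normalize $[I-t^rN]$ into the form $[I-tM]$ before applying $F_r$ is a reasonable caution but does not change the substance.
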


\begin{proof}[Proof of Theorem \ref{zginfinite}]
Because
$\textnormal{NK}_1(\ZG) $ is nontrivial,
it is infinite \cite{Farrell1977}.
Given $j\in \mathbb N$,
let $N_1, \dots , N_j$ be nilpotent
over $\ZG$ with
the matrices $I-tN_j$ representing distinct classes of $\nkone (R)$.
For a sufficiently large such $r$, Lemma \ref{suspend} applies to each $t^{r} N_i$, giving $B_i$ satisfying the conclusions of the lemma.
We take $r$ which in addition is relatively prime to $|G|$;
then the matrices $I-t^{r}N_i$ will represent distinct classes of $\nkone (\ZG)$,
by Lemma \ref{VFoperations}. Let $A_i=B_i^{\Box}$.
Condition (2) holds as part of Lemma \ref{suspend}.
Condition (1) holds because (i)
adding a nilpotent direct summand to a matrix does not affect
its SE class and (ii) for $G$-primitive matrices, SE over $\ZG$
is equivalent to SE over $\Z_+G$ (Prop. \ref{primitiveeventual}).

By Theorem
\ref{sseclassif},
the matrices  $A_i$ are pairwise
not SSE over $\ZG$. Condition (3) holds because
$\det (I-tA_i)=\det(I-tA) \det(I-tN_i)$ and
$\det(I-tN_i)$ here must be 1 by Prop. \ref{sk1fact}.
\end{proof}


\begin{proof}[Proof of Theorem \ref{parryanswer}]


Let $A$ be a \gps  matrix  defining a $G$ extension
which is isomorphic to that defined by $\tau$ and let $A_{i}$
be the \gps  matrices provided by Theorem \ref{zginfinite}.
By condition (1) of Theorem \ref{zginfinite} and Proposition
\ref{eventualconjugacy},
these $G$ extensions of $(X,T)$ are
all eventually conjugate  to $(X_{\tau},\sigma_{\tau})$.
By condition (2), the $A_{i}$ define $G$ extensions which are
conjugate to $G$-extensions defined from $(X,T)$.
Because the $A_{i}$ are not SSE over $\ZG$, they cannot be
SSE over $\Z_+G$, so their extensions (and hence their conjugate
extensions from $(X,T)$) are pairwise not isomorphic.
Lastly, they satisfy condition (3), which for abelian $G$
is a well defined  invariant of
SSE over $\ZG$ (and even SE over $\ZG$) and therefore is
carried over to the isomorphic versions defined over
$(X,T)$.

\end{proof}

\section{Open problems} \label{sec:open}

\begin{realizationproblems}
This set of problems for the algebraic analysis of
mixing finite group extensions of SFTs involves understanding
the range of the algebraic invariants.
\begin{enumerate}
\item
Suppose $G$ is  finite group, $A$ is \gps and $N$
is a nilpotent matrix over $\ZG$. Must $A\oplus N$ be
SSE over $\ZG$ to  a \gps  matrix?
\\ (The methods for
Section \ref{parrysection} and
\cite[Radius Theorem]{BH91} might be useful.
The answer to the corresponding problem for
matrices over subrings of $\R$ is positive
\cite{BoSc3}.)

\item Given a finite abelian group $G$,
characterize the polynomials $\det (I-tA)$
arising from  \gps matrices $A$ over $\ZG$. \\
 (For $\ZG = \Z \{e\} =\Z$, this is solved \cite{S8}.)

\item Given a finite  group $G$,
characterize the trace series
$\mathcal T_A$ and conjugate
trace series
$\kappa\mathcal T_A$
arising from  \gps matrices $A$ over $\ZG$.

\item
Let  $G$ be a finite abelian group.
Suppose  $A$ is a \gps matrix over $\Z_+G$,
and $B$ is a matrix over $\ZG$ such that
$\det (I-tA) = \det (I-tB)$.
Must
$B$ be shift equivalent over $\ZG$ to  a
\gps matrix? \\
 (For  analogues involving
$\R$ and $\Z$, see \cite{BoSc3}.)

\item
Let  $G$ be a finite  group.
Suppose  $A$ is a \gps matrix over $\Z_+G$,
and $B$ is a matrix over $\ZG$ with the same
conjugate trace series \eqref{ctsdefn},
$\kappa \mathcal T_A = \kappa \mathcal T_B$.
Must
$B$ be shift equivalent over $\ZG$ to  a
\gps matrix?
\end{enumerate}
\end{realizationproblems}

\begin{algebraicstudy}
    For square matrices $A$ over $\ZG$,
$G$ a finite group, make a satisfactory
algebraic study of
the $\ZG[t]$-modules
$\cok (I-tA)$ and the associated $\ZG$-modules
$\cok (I-A)$. (The latter arise as
invariants of $G$-equivariant flow equivalence
\cite{BS05}.)
\end{algebraicstudy}


\begin{sufficiencyofinvariants}
  The following questions are open even for $G=\{e\}$.
\begin{enumerate}
\item
For \gps matrices, what invariants must be added to SSE-$\ZG$ to
imply SSE-$\Z_+G$?
\item
Prove or disprove: for $G$ nontrivial, every
SSE-$\ZG$ class of \gps matrices contains infinitey many
SSE-$\Z_+G$ classes.
\end{enumerate}
\end{sufficiencyofinvariants}

\appendix

\section{$G$-SFTs defined from matrices: left vs. right action}
\label{leftvsright}
In this section we describe how $G$ extensions of SFTs
are defined from matrices over $\Z_+G$, and the
corresponding classifying role of strong shift equivalence
of the matrices over $\Z_+G$ (SSE-$\Z_+G$).
In the process, we
 correct (see the Erratum \ref{erratum} below) an error in the corresponding
definition in \cite{BS05}.
Given $X\times G$, the map
 $g:(x,h)\mapsto (x,hg)$ defines
a right action of $G$ on $X\times G$, and the map
$g:(x,h)\mapsto (x,gh)$ defines
a left action of $G$ on $X\times G$.

There are corresponding notations for presenting a
$G$ extension.
Suppose $T:X\to X$ is a homeomorphism
and $\tau: X\to G$ is continuous. For the left action
on $X\times G$
we define the group extension
$T_{\ell,\tau}: X\times G
\to X\times G$ by $T_{\ell}: (x,h)\mapsto (T(x),h\tau(x))$.
For the right action we define
$ T_{r,\tau}: X\times  G \to X\times G$ by
$T_r: (x,h)\mapsto (T(x), \tau (x) h)$.
Each commutes with its associated $G$ action.

In the case of the left $G$ action,
continuous functions $\tau , \tau' $ from
$X \times G$ to $G$ are {\it cohomologous} if there is a
continuous $\gamma: X\to G$ such that for all $x$,
$\tau'(x) = \gamma^{-1}(x)\tau(x)\gamma(Tx)$.
In the case of the right action, the cohomology equation is
 $\tau'(x) = \gamma(Tx)\tau(x)\gamma^{-1}(x)$

Now suppose $A$ is square over $\Z_+G$.
The matrix $\overline A$ over $\Z_+$
is defined from $A$ by
applying the augmentation map
$\sum_g n_g g \mapsto \sum_g n_g$
entrywise. We view
 $\overline A$
as the adjacency matrix of a  directed graph.
If the set of edges from vertex $i$ to vertex $j$
is nonempty, label them by elements of $G$
to match $A(i,j)= \sum_g n_g g$: for each
$g$, exactly $n_g$ edges are labeled $g$.
Let $\tau_A  : X_{\overline A}\to G$ be the continuous function which
sends $x = \dots x_{-1}x_0x_1 \dots $ to the label of
the edge $x_0$, denoted $\ell (x_0)$.
We use $T_{\ell,A}$ and $T_{r,A}$ to
denote $T_{\ell,\tau }$ and $T_{r,\tau}$  with $\tau = \tau_A$.

In the case of the left $G$ action,
with $T$ the shift on $X_A$,
for
 the corresponding $G$ extension
$T_{\ell , A}$ defined on
$X_{\overline A} \times G$, for $n>0$ we have
\begin{align*}
T_{\ell}^n :  (x,h) \mapsto &\  (T^nx,h \tau_A(x) \cdots \tau_A(T^{n-1}x))
\\
=&\  (T^nx,h\ell(x_0) \cdots \ell (x_{n-1}) )\ .
\end{align*}
Here a weight $w=  \ell(x_0) \ell(x_1)\cdots \ell(x_{n-1})$ is
the product of the  labels along the edge-path
$x_0x_1\cdots x_{n-1}$. If $A^n(i,j)= \sum_g n_g g$, then
the number of edge paths with initial vertex $i$, terminal
vertex $j$ and weight $g$ is equal to $n_g$.
This is the connection of matrix and group extension
behind the following result
of Parry (see \cite[Prop. 2.7.1]{BS05}).
In the statement, $\tau_A \sim \tau_B \circ \varphi$
means there is a continuous $\gamma : X_{\overline A} \to G$ such that
$\tau_B (\varphi (x))  = \gamma^{-1} (x) \tau_A (x)\gamma (\sigma_A x)$. In the proposition we need only assume that $G$ is a discrete group, not necessarily finite. In this case, any continuous function into $G$ will then be locally constant.

\begin{prop}\label{prop_parry}
Let $G$ be a discrete group.
The following are equivalent for matrices
$A$ and $B$ over $\Z_+G$.
\begin{enumerate}
\item
$A$ and $B$ are SSE over $\Z_+G$.
\item
There is a homeomorphism
$\varphi\colon X_{\overline A}\to X_{\overline B}$
such that
$\varphi \sigma_{\overline A}= \sigma_{\overline B} \varphi$
and $\tau_A \sim \tau_B \circ \varphi$.
\item
The $G$-SFTs $T_{\ell ,A}$ and
 $T_{\ell ,B}$  are $G$-conjugate.
\end{enumerate}
\end{prop}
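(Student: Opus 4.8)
The plan is to prove the two nontrivial equivalences separately: first the cohomological equivalence $(2)\iff(3)$, which is essentially a matter of unwinding definitions, and then the Williams-type equivalence $(1)\iff(2)$, where the left action of $G$ must be carried carefully through the label bookkeeping. For $(2)\iff(3)$ I would specialize the general characterization of isomorphism of $G$-extensions recorded among the basic definitions in Section~\ref{sec:fge}: two extensions $(X_1,S_1,\tau_1)$ and $(X_2,S_2,\tau_2)$ are $G$-conjugate precisely when some topological conjugacy $\varphi$ of the bases carries $\tau_1$ to a function cohomologous to $\tau_2\circ\varphi$. Once $T_{\ell,A}$ and $T_{\ell,B}$ are identified with $(X_{\overline A},\sigma_{\overline A},\tau_A)$ and $(X_{\overline B},\sigma_{\overline B},\tau_B)$, and the cohomology equation is the left-action one, $\tau_B(\varphi(x))=\gamma^{-1}(x)\tau_A(x)\gamma(\sigma_{\overline A}x)$, this is literally the assertion in $(2)$, so $(2)$ and $(3)$ coincide.

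The substance is $(1)\iff(2)$. For $(1)\implies(2)$ it suffices to treat a single elementary equivalence $A=RS$, $B=SR$ with $R,S$ over $\Z_+G$ and then compose along an SSE chain. I would form the bipartite $G$-labeled graph on vertex sets $P,Q$ whose $P\to Q$ edges are given by $R$ and whose $Q\to P$ edges are given by $S$; its adjacency matrix squares to $A\oplus B$. A bi-infinite path in the $A$-graph is exactly an alternating bipartite path $\cdots PQPQ\cdots$ read at the $P$-vertices, and reading the same path at the $Q$-vertices yields a path in the $B$-graph; this defines the conjugacy $\varphi\colon X_{\overline A}\to X_{\overline B}$. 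Since $A(i,k)=\sum_j R(i,j)S(j,k)$ computes products in the order (first half)(second half), the $A$-label of the edge $x_0$ factors as $\tau_A(x)=gh$, where $g$ is the label of the initial $R$-edge and $h$ that of the following $S$-edge, whereas the corresponding $B$-edge of $\varphi(x)$ carries the label $hg'$ with $g'$ the initial $R$-label of $x_1$. Setting $\gamma(x)=g$, one checks directly that $\gamma^{-1}(x)\tau_A(x)\gamma(\sigma_{\overline A}x)=g^{-1}(gh)g'=hg'=\tau_B(\varphi(x))$. This single computation is the heart of the appendix: it is exactly here that the left action is forced, since the right-action convention would compose the labels in the opposite order and destroy the identity.

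For the converse $(2)\implies(1)$, I would invoke Williams' decomposition theorem to write the base conjugacy $\varphi$ as a finite composition of elementary conjugacies of the underlying $\Z_+$ graphs. Each elementary move is the base of an elementary SSE-$\Z_+G$ of the labeled graphs, obtained by distributing the $G$-labels along the split or amalgamated edges; tracking labels through the chain produces matrices strong shift equivalent over $\Z_+G$ to $A$ and ultimately to $B$. It remains to absorb the transfer function $\gamma$: as $\gamma$ is continuous it depends on only finitely many coordinates, so passing to a sufficiently high block presentation (itself an SSE-$\Z_+G$) makes $\gamma$ a function of a single vertex, at which point conjugating the skewing function by $\gamma$ is realized by a diagonal recoding of the labels, again an elementary SSE-$\Z_+G$. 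Composing these moves yields the desired SSE-$\Z_+G$ from $A$ to $B$.

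I expect the main obstacle to be this converse direction: reconciling Williams' purely combinatorial decomposition with the cohomological bookkeeping, and in particular showing that an arbitrary coboundary $\gamma$ can be trivialized by SSE-$\Z_+G$ moves after recoding. The forward direction, by contrast, reduces to the one explicit bipartite computation above, whose only delicate point---the order in which labels multiply---is precisely the issue the appendix exists to correct.
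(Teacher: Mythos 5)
Your attempt should first be set against what the paper actually does: the paper supplies no proof of Proposition \ref{prop_parry} at all. The result is Parry's theorem, quoted from \cite[Prop. 2.7.1]{BS05}, and the text surrounding the statement only provides (i) the weight computation showing that products of labels along edge-paths match entries of powers of $A$, and (ii) Erratum \ref{erratum} correcting the right-action convention of \cite{BS05} to a left action; for the proof itself the paper says ``Explanation for all this is in \cite{BS05}.'' Measured against that, your $(2)\iff(3)$ is indeed just the isomorphism criterion of Section \ref{sec:fge} specialized to the presentations $T_{\ell,A}$, $T_{\ell,B}$, and your $(1)\implies(2)$ is correct and well executed: the bipartite construction for $A=RS$, $B=SR$, with $\gamma(x)$ equal to the $R$-label of $x_0$, gives $\gamma^{-1}(x)\tau_A(x)\gamma(\sigma_{\overline A}x)=g^{-1}(gh)g'=hg'=\tau_B(\varphi(x))$, which is exactly the left-action cohomology equation and exactly the computation that motivates the erratum.

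However, your $(2)\implies(1)$ contains a genuine gap. You decompose the \emph{base} conjugacy $\varphi$ by Williams' theorem into splittings and amalgamations of the graph of $\overline A$ and claim each move lifts to an elementary SSE-$\Z_+G$ ``by distributing the $G$-labels along the split or amalgamated edges.'' This works for splittings (split edges inherit labels), but it fails for amalgamations: Williams' decomposition is chosen with no reference to the labeling, and an amalgamation merges edges of the underlying graph that may carry \emph{different} $G$-labels, so there is no induced one-block labeling on the amalgamated graph and no elementary SSE-$\Z_+G$ covering that step --- ``distributing'' labels over merged edges is simply undefined. Pushing the labeling forward through such a move yields a cocycle depending on many coordinates, which is precisely what your scheme must avoid; and your final $\gamma$-absorption step (higher-block recoding followed by diagonal conjugation by a matrix with entries in $G$, which is indeed an elementary SSE over $\Z_+G$) cannot repair this, because the defect is not a coboundary of a terminal labeling but the nonexistence of the intermediate labelings. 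A correct argument must instead decompose the $G$-equivariant conjugacy of the extensions themselves (equivalently, of the SFTs presented by $\widetilde A$ and $\widetilde B$ with their free $G$-actions) into \emph{equivariant} elementary moves; equivariance forces amalgamated edges to occur in $G$-orbits with coherent labels, so each such move descends to an elementary SSE-$\Z_+G$ of the quotient matrices. That equivariant bookkeeping is the substance of the proof the paper defers to \cite{BS05} for, and it is a genuinely different mechanism from the one you propose.
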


Explanation for all this is in \cite{BS05}-- after correction of the
following error.
\begin{erratum} \label{erratum}
In \cite[Sec. 2.4]{BS05},  the group extensions (skew products)
were
defined as extensions for the right $G$ action on $X\times G$.  They should
instead be extensions for the left $G$ action on $X\times G$.
Consequently two other changes should be made.
\begin{enumerate}
\item
In paragraph 2 of \cite[Sec. 2.7]{BS05},
``draw an edge from  $(g,i)$ to $(\ell (e)g,j)$"
should be ``draw an edge from $(g,i)$ to $(g\ell(e),j)$''.
\item
In the final sentence of paragraph 2 of
\cite[Sec. 2.7]{BS05},
``$(h,j) \mapsto (hg,j)$'' should be
''$(h,j) \mapsto (gh,j)$'' .
\end{enumerate}
\end{erratum}
\begin{remark} We record below some relations among
matrices and extensions.
We use $A'$ to
denote the
transpose of a matrix $A$;
if $A$ has entries in
$\Z_+G$,
we let
$A^{\textnormal{opp}}=A^\textnormal{o}$ be the
matrix defined by applying entrywise the map
$\sum_g n_g g \mapsto \sum_g n_g g^{-1}$.
(This map is an isomorphism from  $\ZG$ to its
opposite ring.)
\begin{enumerate}
\item
$(T_{\ell ,A})^{-1}$ and $T_{\ell, (A')^\textnormal{o}}$
are conjugate $G$ extensions.
\item
The $G$ extension
$T_{r,A}$ is conjugated to the  $G$
extension $T_{\ell ,A^{\textnormal{o}} }$,
by the map $(x,h)\mapsto (x,h^{-1})$.
(Note,  $(x,hg)\mapsto (x,(hg)^{-1})= (x,g^{-1}h^{-1})$ .)
\item
$T_{r,A}$ and $T_{r,B}$ are conjugate $G$ extensions
$\iff $ $A^{\textnormal{o}}$ and
$B^{\textnormal{o}}$ are SSE-$\Z_+G$.
\item
$A$ and $B $  SSE-$\Z_+G$
$\implies$
$(A')^{\textnormal{o}}$ and $(B')^{\textnormal{o}}$
are  SSE-$\Z_+G$. \\
(Note:  $A=RS, B=SR$
$\implies $
$(A')^{\textnormal{o}}=(S')^{\textnormal{o}}(R')^{\textnormal{o}},
(B')^{\textnormal{o}}=(R')^{\textnormal{o}}(S')^{\textnormal{o}}$.)
\item For $G$ nonabelian, for $A$ and $B$ SSE-$\Z_+G$: \\
$A'$ and $B'$ need not be SSE-$\Z_+G$;
$\ A^{\textnormal{o}}$ and $B^{\textnormal{o}}$ need not be SSE-$\Z_+G$.\\
(See Example \ref{oppex}).
\item
For $G$ nonabelian, for $T_{\ell,A}$ and $T_{\ell, B}$ conjugate
$G$-extensions: \\
$T_{r,A}$ and $T_{r, B}$ need not be conjugate
$G$-extensions.
\end{enumerate}
\end{remark}
\begin{example} \label{oppex}
Let $A \sim B$ mean $A$ and $B$ are SSE-$\Z_+G$.
We give an example here of $A\sim B$ with
$A^{\textnormal{opp}} \not\sim B^{\textnormal{opp}}$ and
$A'\not\sim B'$.
We use $G$ the group of permutations on $\{1,2,3,4\}$,
in which $gh$ is defined by
$(gh)(x)=g(h(x))$.
Let $M[x,y,z]$ denote a matrix $M$ with $M(1,2)=x, M(2,3)=y, M(3,1)=z$
and $M=0 $ otherwise. In $G$, define
$a=(143),
b=(123),c=(12)(34),d=(13)(24)$; then $abc=e $ and $a^{-1}b^{-1}c^{-1}
=d\neq e$.  Set $A=M[a,b,c]$ and $B=M[e,e,e]=B^{\textnormal{opp}}$.
 Then
$A\sim M[e,e,abc]=B$, but
$A^{\textnormal{opp}}=M[a^{-1},b^{-1},c^{-1}] \sim
M[e,e,a^{-1}b^{-1}c^{-1}] =M[e,e,d]$, and
$M[e,e,d]\not\sim B$ (e.g. by Proposition \ref{finitetracedata}).
Therefore $A^{\textnormal{opp}}\not\sim
B^{\textnormal{opp}}$. Similarly,
$B'\sim B$, and $A'\sim M[e,e,cab]=M[e,e,d]\not\sim B$.
\end{example}

\section{\gps  matrices and
shift equivalence}

\label{zgprimse}

{\bf Primitivity for matrices over $\mathbb Z G$.}

In this section,  $G$ is a finite group.
We will spell out some basic facts around
the regular representation of $G$,  our use of the
Perron Theorem and SE over $\Z_+G$.

Let $m=|G|$. Fix an enumeration of the elements of $G$,
$G=\{g_1, \dots ,g_m\}$, with $g_1=e$, the identity element.
If $x=\sum_i n_ig_i \in \Z G$, then its image under the augmentation
map is $\overline x= \sum_i n_i$.

\begin{definition} \label{augdefn}
For  vectors $v$ and matrices $M$
over $\ZG[t]$ (perhaps over just $\ZG$),
we define $\overline v$ and $\overline M$ by applying the
augmentation map entrywise,
$\sum_{s=0}^S \sum_g  m_{sg} g t^s \mapsto
\sum_{s=0}^S \sum_g  m_{sg}  t^s $ .
\end{definition}

\begin{notation} \label{zgtentrynotation}
Given a matrix $A$ over $\ZG$,
define $a_{ij} = A(i,j)$ and $a_{ijk} = A^k(i,j) $,  and let
$a_{ijkg}$ be the
integers such that
\[
A^k(i,j) = a_{ijk}=\sum_{g} a_{ijkg}\, g \   .
\]
Define
$\overline{a}_{ij}=\sum_{g} a_{ij1g}$, i.e.,
$\overline A (i,j) = \overline{a}_{ij}$.
The uppercase - lowercase correspondence above producing $a$ given
$A$ may be
used for other letters as well.
\end{notation}


Let $e_i$ denote the size $m$ column vector whose $i$th entry
is 1 and whose other entries are zero. Define an isomorphism
of additive groups $p: \Z G \to \Z^m$ by the rule
$\sum_i n_i g_i\mapsto \sum_i n_ie_i$.  We carry over the usual
partial order on $\Z^m$: for $x= \sum_i n_ig_i$ we say
$x\geq 0$ if $n_i\geq 0$ for all $i$, and we write
$x\gg 0$ if $n_i> 0$ for all $i$. When we use an order relation
for vectors or matrices, we mean that it holds entrywise.
For example, $x\gg 0$ in $\Z G$ if and only if
$p(x)>0$ in $\Z^m$. We also carry over the usual notion of
convergence in $\Z^m$:
a sequence of elements $x^{(k)}=
\sum_i n^{(k)}_i g_i $ converges to $x=\sum_i n_ig_i $ iff
$\lim_k  n^{(k)}_i = n_i $ for each $i$. Convergence of vectors
or matrices over $\Z G$ is by definition entrywise convergence.

For $1\leq r\leq m$,
define $m\times m$ permutation matrices $P_r, Q_r$ by
the rules
\begin{align*}
P_r(i,j)\ &= \ 1 \ \ \text{ iff } \ \ g_rg_j=g_i \\
Q_r(i,j)\ &= \ 1 \ \ \text{ iff } \ \ g_jg_r=g_i \ .
\end{align*}
Then $P_r(p(g_j)) = p(g_rg_j)$ and
$Q_r(p(g_j)) = p(g_jg_r)$. The map $g_r\mapsto P_r$ is the
regular representation of $G$ given by its action on itself
by multiplication from the left; similarly for $Q_r$ and
right multiplication.
For $x=\sum_j n_j g_j\in \Z G$, we similarly define $\rho (x)$
to be the $m\times m$ matrix over $\Z$ which presents
multiplication by $x$ from the left. That is, the following
diagram commutes, with $\rho (x) = \sum_j n_jP_j$ :
\[
\begin{CD}
\Z G    @>x >>    \Z G \\
@VpVV      @VVpV \\
\Z^m @>\rho(x)>> \Z^m
\end{CD}
\qquad \quad
\begin{CD}
y       @>>>    xy  \\
@VVV      @VVV \\
p(y)    @>>>    \rho(x) p(y)
\end{CD}
\]
Column 1 of the matrix $\rho(x)$ is
$p(x)=\left(\begin{smallmatrix} n_1\\ \vdots \\ n_m\end{smallmatrix}\right)$,
since $p(x)=p(xe)=\rho(x)p(e)=\rho (x) e_1$.
For each $j$, column $j$ of $\rho(x)$ is
$Q_j \left(\begin{smallmatrix} n_1\\ \vdots \\ n_m\end{smallmatrix}\right)$,
since
column $j$ of $\rho(x)$ equals
\[
\rho(x)e_j
=\rho(x)p(g_j)=p(xg_j)=Q_jp(x)\  .
\]

Now suppose $A$ is $\ell\times n$ over $\Z G$.
Define an $\ell m\times nm$ matrix $\widetilde A$, with a block form of
$m\times m$ blocks, in which the $ij$ block is
$\rho (a_{ij})$.
If $A,B$ over $\Z G$ have compatible sizes for
matrix multiplication, then $\widetilde A \widetilde B =
\widetilde{AB}$.
Letting  $\kappa$ be defined as in Definition
\ref{kappadefn}, we pause
to record some  facts used
in Section \ref{sec:fge} to discuss
the periodic data \eqref{perdat}.
\begin{proposition} \label{finitetracedata}
Let $G$ be a finite group, with $m=|G|$.
Suppose $A$ is an $n\times n$ matrix over $\ZG$.
 Let $\eta (t)\in \Z[t]$ be the characteristic polynomial of $\widetilde A$.
Then $\eta (A)=0$, and
 \begin{enumerate}
\item
the finite sequence
$(\trace (A^k))_{1\leq k \leq mn}$, determines
$(\trace(A^k))_{1\leq k < \infty} $ .
\item
the finite sequence
$(\kappa (\trace(A^k)))_{ 1\leq k \leq mn}$ determines
$(\kappa (\trace(A^k)))_{1\leq k < \infty} $.
\end{enumerate}
If $A$ and $B$ are matrices SSE over $\ZG$, then
$(\kappa (\trace(A^k))_{ 1\leq k <\infty} =
(\kappa (\trace(B^k))_{ 1\leq k < \infty}$.
\end{proposition}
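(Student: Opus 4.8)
The plan is to use the companion relation $\eta(A)=0$ to reduce both determinacy claims to a single linear recursion with integer coefficients, and to handle the invariance claim separately via the cyclic behaviour of $\kappa\circ\trace$.

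First I would establish $\eta(A)=0$. Since the assignment $M\mapsto \widetilde M$ is additive, multiplicative (by $\widetilde A\,\widetilde B=\widetilde{AB}$), $\Z$-linear, and sends the identity to the identity, it carries the integer polynomial $\eta$ to $\widetilde{\eta(A)}=\eta(\widetilde A)$. By the Cayley--Hamilton theorem applied to the integer matrix $\widetilde A$ we have $\eta(\widetilde A)=0$, so $\widetilde{\eta(A)}=0$; because the regular representation $\rho$ is faithful, the map $M\mapsto\widetilde M$ is injective, whence $\eta(A)=0$. Writing $\eta(t)=\sum_{i=0}^{mn}c_i t^i$ with $c_{mn}=1$ and $c_i\in\Z$, I would multiply the relation $\sum_i c_i A^i=0$ by $A^j$ and take traces to obtain, for every $j\ge 1$,
\[
\trace(A^{mn+j})=-\sum_{i=0}^{mn-1}c_i\,\trace(A^{i+j}),
\]
so each $\trace(A^k)$ with $k>mn$ is an explicit $\Z$-linear combination of the preceding $mn$ traces. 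Thus $(\trace(A^k))_{1\le k\le mn}$ determines the full sequence, proving (1). Applying the additive homomorphism $\kappa\colon\ZG\to\Conj G$ to this recursion (its coefficients are integers and so pass through $\kappa$) yields the identical recursion for $(\kappa(\trace(A^k)))$, which proves (2).

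For the final statement, by transitivity of SSE it suffices to treat a single elementary step $A=RS$, $B=SR$ with $R,S$ over $\ZG$ (possibly rectangular). For each $k\ge 1$ one has $(RS)^k=R\,(SR)^{k-1}S$, so $\trace(A^k)=\trace(XY)$ and $\trace(B^k)=\trace(YX)$ with $X=R$ and $Y=(SR)^{k-1}S$. Here is the one place noncommutativity intrudes: over $\ZG$ the trace is not cyclic, and $\trace(XY)-\trace(YX)=\sum_{i,j}\big(X_{ij}Y_{ji}-Y_{ji}X_{ij}\big)$ is a sum of commutators in $\ZG$. The key observation is that $\kappa$ annihilates every commutator, since for group elements $\kappa(gh)=\kappa(hg)$ because $hg=g^{-1}(gh)g$ is conjugate to $gh$; hence $\kappa(ab)=\kappa(ba)$ for all $a,b\in\ZG$. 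Therefore $\kappa(\trace(A^k))=\kappa(\trace(B^k))$ for every $k$, and the result follows by chaining the elementary steps of a strong shift equivalence.

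The hard part will be exactly this noncommutative subtlety in (3): unlike the case over a commutative ring, one cannot assert equality of the full trace series $\trace(A^k)=\trace(B^k)$ over $\ZG$ under SSE, but only after passing to $\Conj G$ via $\kappa$, which collapses the commutator discrepancy. Everything else is routine once $\eta(A)=0$ has been secured through the faithful regular representation.
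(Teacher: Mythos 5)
Your treatment of $\eta(A)=0$ and of the final SSE claim is correct and essentially identical to the paper's: the ring embedding $M\mapsto\widetilde M$ together with Cayley--Hamilton for the integer matrix $\widetilde A$ gives $\eta(A)=0$, and for SSE invariance you reduce to an elementary step $A=RS$, $B=SR$, write $A^k$ and $B^k$ as products $XY$ and $YX$, and use that $\kappa$ annihilates commutators because $hg=g^{-1}(gh)g$. Those parts need no repair.

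There is, however, a genuine gap in your proof of claims (1) and (2). What you actually prove is that the sequence $(\trace(A^k))_k$ satisfies a linear recursion of order $mn$ whose coefficients are the coefficients of $\eta$, the characteristic polynomial of $\widetilde A$. But those coefficients depend on $A$, not on the finite trace sequence, and ``the first $mn$ terms plus the existence of some order-$mn$ recursion'' does not determine a sequence: different recursions with the same initial segment give different continuations (e.g.\ initial data $1,0$ continued by $x_{k+2}=2x_{k+1}$ versus by $x_{k+2}=x_{k+1}+x_k$). So to justify the word ``determines'' you must show that the finite data itself recovers the recursion, and this is exactly the step the paper supplies and you omit. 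Since the trace of the regular-representation matrix $\rho(x)$ equals $m$ times the coefficient of $e$ in $x$, one has $\trace(\widetilde A^k)=m\sum_i a_{iike}$, and the coefficient of the conjugacy class $\{e\}$ is visible in $\kappa(\trace(A^k))$; hence the finite sequence $(\kappa(\trace(A^k)))_{1\le k\le mn}$ determines $(\trace(\widetilde A^k))_{1\le k\le mn}$, which (by Newton's identities for the $mn\times mn$ matrix $\widetilde A$) determines $\eta$ and therefore the integers $c_i$ in the recursion. With that observation inserted, your recursion argument closes both (1) and (2) --- the finite trace data determines the $\kappa$-data, hence $\eta$, hence all later terms; without it, neither claim follows from what you wrote.
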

\begin{proof}
$\eta(A)=0$ because $A\mapsto \widetilde A$
defines an embedding of the ring of $n\times n$ matrices
over $\Z G$ into the ring of $nm\times nm$ matrices over
$\Z$. The coefficients of $\eta$ are determined by
the finite sequence
$(\trace (\widetilde A^k))_{ 1\leq k \leq mn}$,
which equals
$(m\sum_i a_{iike})_{1\leq k \leq mn} $, which is determined by
$(\kappa (\trace (A^k))_{1\leq k \leq mn} $.
The  claims (1,2) then follow because
$\eta(A)=0$ gives integers $c_1, \dots , c_{nm}$ such that
$\trace (A^k) = c_1 \trace (A^{k-1}) + \dots + c_{nm}
\trace (A^{k-nm})$ for all $k>mn$.
It suffices to prove  the final claim
in the case
$A=RS, B=SR$ for matrices $R,S$ over $\ZG$. Then
\[
\trace (RS)= \sum_{i,j,g,h} r_{ij1g}s_{ji1h}gh \quad
\text{and} \quad
\trace (SR) = \sum_{i,j,g,h} s_{ji1h}r_{ij1g}hg  \ .
\]
Because $gh = h^{-1}(hg)h$, it follows that
$\kappa (\trace (RS)) =
\kappa (\trace (SR)) $ .  For $k>1$, we have
$A^k = (A^{k-1}R)S$ and
$B^k = S(A^{k-1}R)$ . The conclusion follows.
\end{proof}


\begin{definition} \label{defngprimitive}
  For a matrix $A$
  over $\R G$
(e.g., $A$ over $\ZG$),
we say $A$ is \gps  if
$A$ is square, $A\geq 0$ and,
for some $k>0$,
$A^k\gg 0$.
\end{definition}
 Clearly $A$ is \gps  if and only
 if $\widetilde A$ is primitive,
 since  $A\gg 0$ is equivalent to $\widetilde A >0$.
  (For  an example, consider
$G=\Z /2$, $g\neq e$ and $A=(5g)$, giving $\widetilde A =
\left(\begin{smallmatrix}0&5\\5&0\end{smallmatrix} \right)$;
here $\overline A$ is primitive but $A$ is not \gps .)
The spectral radius of a real matrix $M$ is denoted $\lambda_M$.
The matrices $\overline A$ and $\widetilde A$ have the same
spectral radius.

\begin{definition} \label{specradiusdefn}
Let $G$ be a finite group. The spectral radius $\lambda_A$
of a square matrix $A$ over $\ZG$ is defined to be
$\lambda_{\overline A}= \lambda_{\widetilde A}$. The spectral
radius $\lambda_A$ of a square matrix $A$ over $\ZG[t]$ is defined to
be the spectral radius of $A^{\Box}$.
\end{definition}
Naturally, for $A$ square over $\ZG$, we have
$\lambda_A = \varlimsup_k \max_{i,j,g} |a_{ijkg}|^{1/k}$ .

\begin{theorem}\label{zgperron} Suppose $G$ is a finite group,
$G=\{g_1,\dots ,g_m\}$ with $g_1=e$,
the identity element of $G$. Suppose $A$ is an $n\times n$
matrix over $\Z_+ G$ such that its augmentation $\overline A$
is irreducible. Let $\lambda=\lambda_A$.
For  $i$ in $\{1,\dots ,n\}$, set
$H_i=\cup_k \{g\in G: a_{iikg}>0\}$.
Then the following statements are true.
\begin{enumerate}
  \item
The sets $H_i$ are
conjugate subgroups of $G$.
\item
The following are equivalent.
\begin{enumerate}
\item
$\widetilde A$ is primitive.
\item
$A$ is \gp .
\item
Let $\overline{\ell}, \overline r$
denote  positive left and right eigenvectors
of $\overline A$
such that
$\overline{\ell}\overline{r}= (1)$
(these vectors exist because
$\overline A$ is irreducible). Then
\[
\lim_k \Big(\frac 1{\lambda}A\Big)^k\ = \ (g_1+\cdots +g_m)
 \frac 1m\overline r \overline{\ell} \ .
\]
\item With the notation $\trace(A^k)= \sum_g \tau_{kg} g$,
the following conditions hold:
\begin{enumerate}
\item
There are relatively prime $j,k$ such that
$\tau_{ke}>0$ and $\tau_{je}>0$.
\item There exists   $i$ such that
$H_i = G$.
\end{enumerate}
\end{enumerate}
\item
If $G$ is abelian
and $\overline A$ is irreducible, then the polynomial $\det (I-tA)$
determines whether $A$ is \gps.
\end{enumerate}
\end{theorem}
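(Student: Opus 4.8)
The plan is to translate $G$-primitivity of $A$ into a purely spectral condition on the $nm\times nm$ nonnegative integer matrix $\widetilde A$, and then to observe that the spectrum of $\widetilde A$ can be read off from $\det(I-tA)$. Recall that $A$ is $G$-primitive if and only if $\widetilde A$ is primitive. Since $G$ is abelian, its regular representation decomposes over $\C$ as the sum $\bigoplus_{\chi}\chi$ over the characters $\chi$ of $G$, so a fixed change of basis applied blockwise conjugates $\widetilde A$ to $\bigoplus_{\chi}\chi(A)$, where $\chi(A)$ is the complex matrix obtained by applying $\chi$ entrywise. Consequently
\[
\det(I-t\widetilde A)=\prod_{\chi}\det(I-t\chi(A))=\prod_{\chi}\chi\big(\det(I-tA)\big),
\]
the last product being the regular norm $N_{\ZG/\Z}$ applied to the polynomial $\det(I-tA)\in\ZG[t]$. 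Thus $\det(I-tA)$ determines $\det(I-t\widetilde A)$, hence the full multiset of eigenvalues of $\widetilde A$, and in particular $\lambda=\lambda_{\overline A}=\lambda_{\widetilde A}$, the largest of their moduli.

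The main step is a structural Perron--Frobenius analysis of $\widetilde A$ under the hypothesis that $\overline A$ is irreducible. Let $H\le G$ be the subgroup of cycle weights (for abelian $G$ this is the common value of the conjugate subgroups $H_i$ of part (1)). Viewing $\widetilde A$ as the adjacency matrix of the skew-product graph over the strongly connected graph of $\overline A$, its strongly connected components are exactly the cosets of $H$: there are $[G:H]$ of them, no edges pass between distinct components, and the free $G$-action on the fibers permutes them transitively and isomorphically. Hence $\widetilde A$ is permutation-similar to a block-diagonal matrix $\mathrm{diag}(B_1,\dots,B_{[G:H]})$ whose blocks are mutually isomorphic irreducible nonnegative matrices, each of spectral radius $\lambda$. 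I expect this coset decomposition---in particular the verification that no single edge crosses between components, which amounts to checking that each edge weight lies in the coset of $H$ dictated by its endpoints---to be the main obstacle, though it is standard once the skew-product structure is set up.

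Given the block structure, the conclusion is Perron--Frobenius bookkeeping. Each irreducible block contributes $\lambda$ as a simple eigenvalue, so $\lambda$ has multiplicity exactly $[G:H]$ in $\widetilde A$; and the peripheral spectrum of $\widetilde A$ is $[G:H]$ copies of that of a single block $B_1$, namely $\{\lambda\omega:\omega^{h}=1\}$ where $h$ is the period of $B_1$. Therefore $\widetilde A$ is primitive (equivalently $[G:H]=1$ and $h=1$) if and only if $\lambda$ is a simple eigenvalue of $\widetilde A$ and every other eigenvalue has modulus strictly less than $\lambda$. This last condition refers only to the multiset of eigenvalues of $\widetilde A$, which by the first paragraph is determined by $\det(I-tA)$; so $\det(I-tA)$ determines whether $A$ is $G$-primitive, as claimed. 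As a sanity check, for $G=\Z/2$ the criterion rejects $A=(5g)$ (eigenvalues $\pm 5$) and $A=(e+e)$ (eigenvalue $2$ of multiplicity two) while accepting $A=(e+g)$ (eigenvalues $2,0$).
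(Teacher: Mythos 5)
You have proved, at most, part (3) of the theorem; parts (1) and (2) are nowhere addressed. The statement has three components: (1) the sets $H_i$ are conjugate subgroups of $G$; (2) the four-way equivalence among primitivity of $\widetilde A$, $G$-primitivity of $A$, the Perron-type limit formula (c), and the trace criterion (d); and (3) detectability of $G$-primitivity from $\det(I-tA)$ for abelian $G$. Your write-up invokes part (1) as an input (``the conjugate subgroups $H_i$ of part (1)''), takes the equivalence (a)$\iff$(b) as known, and never mentions the limit formula (c) or the trace conditions (d), let alone proves their equivalence with (a) and (b). In the paper these are the bulk of the work: (1) comes from \cite[Proposition 4.4]{BS05}, (b)$\implies$(c) is a Perron argument transferring eigenvectors of $\overline A$ to $\widetilde A$ via $u=g_1+\cdots+g_m$, and (d)$\implies$(b) is a path-concatenation argument producing cycles of weight $e$ with relatively prime lengths.

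Second, the one part you do address rests on an unproven lemma placed exactly where the difficulty is. Your spectral criterion --- $\widetilde A$ is primitive iff $\lambda$ is a simple eigenvalue and strictly dominates all other eigenvalues in modulus --- is false for general nonnegative matrices (e.g.\ $\diag(2,1)$ has a simple strictly dominant eigenvalue but is not primitive), and it is valid here only because of the structure you assert: that the strongly connected components of $\widetilde A$ are indexed by cosets of $H$, are pairwise isomorphic and irreducible, and all have spectral radius $\lambda$. You concede this is ``the main obstacle'' and call it standard, but it is precisely the content of (1) together with the skew-product structure underlying (2) (even the fact that $H_i$ is a subgroup requires the finite-semigroup argument), so deferring it leaves part (3) without a self-contained proof. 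Granting that lemma, your route to (3) is correct and genuinely different from the paper's: the paper deduces (3) from (1) and criterion (d), using that for abelian $G$ one has $H_i=\cup_k\{g:\tau_{kg}>0\}$ and that $\det(I-tA)$ constructively determines the sequence $(\trace(A^k))_{k\in\N}$; you instead diagonalize the regular representation by characters, obtaining $\det(I-t\widetilde A)=\prod_\chi \chi\big(\det(I-tA)\big)$, and read primitivity off the resulting spectrum. That is an attractive alternative which makes the spectral content explicit (and your three sanity checks are right), but as submitted the proposal does not prove the stated theorem.
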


\begin{remark}{\normalfont
It follows from the Perron theorem that
the convergence in (3) above
 is exponentially fast. }
\end{remark}

\begin{proof}[Proof of Theorem \ref{zgperron}]
(1) Given $i$, there exists a diagonal matrix
$D$, with each diagonal entry an element of $G$, such
that $D^{-1}AD$ has all entries in $H_i$
\cite[Proposition 4.4]{BS05}. As in
\cite{BS05}, it follows easily that the $H_i$
are conjugate subgroups of $G$.

(2)
(a) $\iff $ (b) This was part of the paragraph
before the theorem.


(b) $\implies$ (c)
Let $u$ denote $g_1+\cdots +g_m$.
The augmentation matrix $\overline A$ is primitive,
because $A$ is \gpsperiod\  Therefore
$((1/\lambda )\overline A)^k$ converges  to
$\overline{r}\overline{\ell}$. Define size $n$ vectors over
$\R_+G$ by setting $\ell = u \overline{\ell}$ and
$r = u \overline r$. If $x=\sum_in_ig_i \in \Z G$, then
$xu  = (\sum_i n_i)u= ux$. Therefore
\[
Ar = Au\overline r = u\overline A \overline r = u\lambda \overline r
=\lambda r
\]
and likewise $\ell A = u\lambda \overline{\ell} =\lambda \ell$ .
These eigenvectors lift to eigenvectors
$\widetilde{\ell}, \widetilde r$
of $\widetilde A$.
Explicitly,  $\widetilde{\ell} = (\widetilde{\ell}_1, \dots ,
\widetilde{\ell}_n)$ in which
$\widetilde{\ell}_j $ is the size $m$ row vector
$p(u\overline{\ell_j})$; every
entry of $\widetilde{\ell}_j$
equals $\overline{\ell}_j$.
Likewise,
every
entry of $\widetilde{r}_j$
equals $\overline{r}_j$.
We have
$(\widetilde{\ell}\widetilde r) = m(\overline{\ell}\overline{r})$.
Only now do we appeal to the primitivity of $\widetilde A$,
which guarantees
\[
\lim_k \Big(\frac 1{\lambda}\widetilde A\Big)^k \ =\  \frac 1m
\Big(\widetilde r \widetilde{\ell}\Big) \ .
\]
Translated back to $A$, this becomes
\[
\lim_k
\Big(\frac 1{\lambda} A\Big)^k \ =\
(g_1 + \cdots + g_m)
\Big(
\frac  1m
\overline r \overline{\ell}
\Big)\ .
\]

(c) $\implies$ (d) Obvious.

(d) $\implies$ (b) The subgroups $H_i$ are conjugate,
so (d) implies that
$H_i=G$ for every $i$.
Now suppose $j,k$ are
relatively prime with   $\tau_{ke}>0$ and $\tau_{je}>0$.
Pick indices $y,z$ such that
$(A^j)_{yye}>0$ and $(A^k)_{zze}>0$. If $y=z$ then for all large
$M$ we have $a_{yyMe}>0$, and because $H_y=G$ we have
for all $g$ and all large $M$ that $a_{yyMg}>0$.
It then easily follows from the irreducibility of $\overline A$ that
$A$ is \gp.

So suppose $y\neq z$. Because $\overline A$ is irreducible,
we may choose
 integers $s,s'$ such that $\overline A^s(y,z)>0$
and $\overline A^{s'}(y,z)>0$.
There are corresponding paths $\pi, \pi'$
in the labeled graph with adjacency matrix $A$,
say with  weights $g$ and $g'$.
Let $\pi^*$ be a path from $q$ to $q$ with  length $k$ and
weight $e$.
The concatenation $\pi \pi'$
is a path of length $s+ s'$ and weight $gg'$
from $y$ to $y$.
Pick $r$ such that $(gg')^r=e$. Then the path
$(\pi \pi')^{jr-1}\pi \pi^* \pi'$ is a path from
$y$ to $y$ of weight $e$ and length $jr+k$,
which is relatively prime to $j$. The argument of the
last paragraph then  applies to show $A$ is \gps .


(3) Suppose $G$ is abelian. In this case the conjugate
groups $H_i$ are equal and must equal
$\cup_k \{g: \tau_{kg}>0\} $. Thus $A$ is $G$-primitive
if and only if for some relatively prime $j,k$ we have
$\tau_{ke} \gg 0$ and $\tau_{je} \gg 0$. This is easily
checked with $\det(I-tA)$, which
constructively determines $(\tr (A^k))_{k\in \N}$.

\end{proof}

\begin{corollary} \label{gmixa}
A matrix $A$ over $\Z_+G$ defines a mixing $G$-extension if and only if
$A$  is essentially \gps .
\end{corollary}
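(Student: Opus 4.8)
The plan is to realize the $G$-extension determined by $A$ as an ordinary edge shift of finite type whose $\Z_+$ adjacency matrix has primitivity structure matching the $G$-primitivity structure of $A$, and then to invoke the standard characterization of mixing for SFTs. First I would recode the extension. Using the left action as in Appendix \ref{leftvsright}, a point of the extension is a pair $(x,h)$ with $x$ a path in the graph of $\overline A$ and $h\in G$, and $T_{\ell}(x,h)=(Sx,h\tau(x))$. Recording at each coordinate the traversed edge together with the running group coordinate realizes the extension, up to topological conjugacy, as the edge SFT of the graph $\mathcal G'$ with vertex set $\{(i,h): i \text{ a vertex of } \overline A,\ h\in G\}$ and, for each edge $i\to j$ labeled $g$ and each $h$, an edge $(i,h)\to (j,hg)$. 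Thus the vertex adjacency matrix $B$ of $\mathcal G'$ is $B_{(i,h),(j,h')}=[A(i,j)]_{h^{-1}h'}$, the coefficient of $h^{-1}h'$ in $A(i,j)$, and $(Y,T_{\ell})$ is mixing exactly when $X_B$ is.

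The key computation is the induction $[B^{k}]_{(i,h),(j,h')}=[A^{k}(i,j)]_{h^{-1}h'}$, which follows by matching the convolution in $\ZG$ with matrix multiplication along the group coordinate. From this I read off two facts. First, the block $(i,j)$ of $B^{k}$ is strictly positive iff $A^{k}(i,j)\gg 0$; hence $B^{k}>0$ iff $A^{k}\gg 0$, so $B$ is primitive iff $A$ is \gps (Definition \ref{defngprimitive}). Second, a nonzero element of $\Z_+G$ has positive augmentation, so as $h'$ ranges over $G$ the entry $[A(i,j)]_{h^{-1}h'}$ is positive for some $h'$ precisely when $A(i,j)\neq 0$; therefore row $(i,h)$ of $B$ is zero iff row $i$ of $A$ is zero, and likewise for columns, uniformly in $h$. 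Thus an index $i$ is removable in $A$ exactly when all of $(i,h)$, $h\in G$, are removable in $B$.

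Next I would track the core construction. Because removability is block-consistent, each reduction step on $B$ deletes precisely the blocks $\{(i,h):h\in G\}$ for the indices $i$ deleted from $A$; iterating, $\mathrm{core}(B)$ is exactly the matrix built from $\mathrm{core}(A)$ by the same rule that builds $B$ from $A$. I would also note that the nondegenerate core of the paper (the maximum principal submatrix with no zero row or column) coincides with this iterated core, since any principal submatrix with no zero row or column consists of indices lying on bi-infinite paths, hence survives every reduction, while the iterated core is itself such a submatrix and so is the largest one. Applying the first fact above to $\mathrm{core}(A)$ gives: $\mathrm{core}(B)$ is primitive iff $\mathrm{core}(A)$ is \gpscomma i.e. iff $A$ is essentially \gpsperiod Finally, by the standard characterization of mixing for shifts of finite type, $X_B$ is topologically mixing iff $\mathrm{core}(B)$ is primitive; chaining the equivalences yields the corollary (the degenerate case of empty or finite core corresponds to $\mathrm{core}(A)$ not being \gpscomma consistent with the extension not being mixing).

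I expect the only point requiring genuine care to be the bookkeeping of the first two paragraphs: identifying the adjacency matrix $B$ of the recoded extension, verifying the power identity $[B^{k}]_{(i,h),(j,h')}=[A^{k}(i,j)]_{h^{-1}h'}$, and establishing the block-consistency of removability. Once these are in place, the definitions and the mixing criterion close the argument. Alternatively, one could route everything through $\widetilde A$, using the stated equivalence that a matrix is \gps iff its regular-representation lift is primitive together with Theorem \ref{zgperron}, but the direct computation with $B$ is cleaner and self-contained.
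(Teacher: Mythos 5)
Your proof is correct and is essentially the paper's own argument: your matrix $B$ is precisely the regular-representation lift $\widetilde A$ up to the permutation $(i,h)\mapsto(i,h^{-1})$ of the group coordinate, and the paper's proof likewise identifies the $G$-extension with the edge SFT presented by $\widetilde A$ and then invokes the equivalence of (2)(a) and (2)(b) in Theorem \ref{zgperron} (that $\widetilde A$ is primitive if and only if $A$ is $G$-primitive) together with the standard mixing criterion for SFTs. Your explicit power identity and the block-consistency of removability merely spell out details that the paper delegates to that cited equivalence and to the core construction.
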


\begin{proof}
The $G$ extension defined by $A$ is a SFT defined by
$\widetilde A$, and therefore is topologically mixing
if and only if $\widetilde A$ is essentially primitive as
a matrix over $\Z_+$.
Therefore
the corollary follows from the equivalence of (1) and (2)
in  Proposition \ref{zgperron}.
\end{proof}

{\bf Polynomial matrices.}
Given $A$ over $t\Z_+G[t]$, we have
$\sum_n \trace (A^n) = \sum_n \trace ((A^{\Box})^n))t^n$,
and for $G$ abelian
$\det (I-A) = \det(I-tA^{\Box})$. By Theorem \ref{zgperron},
this data determines whether $A$ is \gp.

 We will need the following
consequence of Theorem \ref{zgperron}.

\begin{proposition}\label{looprate}
Suppose $A=(a)$ is a $1\times 1$
matrix over $t\Z G_+[t]$ with
$A^{\Box}$ \gpsperiod\
Let $(\alpha_k)$ be the sequence of elements from
$\Z G$ such that
\[
\sum_{j=1}^{\infty} a^j \ = \ \sum_{k=1}^{\infty} \alpha_k t^k\ .
\]
Then there is a positive real number $c$ such that
\[
\lim_{k\to \infty} \frac 1{(\lambda_A)^k}\, \alpha_k
\ = \
c(g_1+\dots + g_m) \ .
\]
\end{proposition}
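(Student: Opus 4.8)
The plan is to identify the coefficients $\alpha_k$ with a single matrix entry of a power of $A^{\Box}$, and then apply the $G$-Perron theorem, Theorem \ref{zgperron}(2)(c), to that $G$-primitive matrix.

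First I would prove the combinatorial identity
\[
\alpha_k \ = \ \big((A^{\Box})^k\big)_{11}, \qquad k\geq 1 .
\]
Writing $a=\sum_{i=1}^d a_i t^i$ with $a_i\in\Z_+G$, view $A^{\Box}$ (a $d\times d$ matrix over $\Z_+G$) as the adjacency matrix of a $G$-labeled graph on vertices $1,\dots,d$: from vertex $1$ there are edges of total weight $a_j$ to each vertex $j$, and from each vertex $i$ with $i\geq 2$ there is a single $e$-labeled edge to vertex $i-1$. A first-return loop at vertex $1$ therefore has the form $1\to j\to j-1\to\cdots\to 1$, of length $j$ and weight $a_j$, so the first-return generating function (grading by length) is exactly $a(t)$. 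Decomposing an arbitrary loop at vertex $1$ of length $n$ as an ordered concatenation of first returns, and using that $t$ is central to assemble the $s$-fold concatenations into $a^s$, gives
\[
\sum_{n\geq 1}\big((A^{\Box})^n\big)_{11}\,t^n \ = \ \sum_{s\geq 1}a^s \ = \ \sum_{k\geq 1}\alpha_k t^k ,
\]
and comparing coefficients of $t^k$ yields the identity.

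Next I would apply Theorem \ref{zgperron}(2)(c) to the $G$-primitive matrix $A^{\Box}$. With $u=g_1+\cdots+g_m$, $\lambda=\lambda_{A^{\Box}}=\lambda_A$ (Definition \ref{specradiusdefn}), and $\overline r,\overline\ell$ the positive Perron eigenvectors of $\overline{A^{\Box}}$ normalized by $\overline\ell\,\overline r=(1)$, the theorem gives
\[
\lim_k \Big(\tfrac1\lambda A^{\Box}\Big)^k \ = \ u\,\tfrac1m\,\overline r\,\overline\ell .
\]
Reading off the $(1,1)$ entry and invoking the identity above yields
\[
\lim_{k\to\infty}\frac1{(\lambda_A)^k}\,\alpha_k \ = \ \frac{(\overline r)_1(\overline\ell)_1}{m}\,u ,
\]
so the proposition holds with $c=(\overline r)_1(\overline\ell)_1/m$. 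Positivity of $c$ is immediate: since $A^{\Box}$ is $G$-primitive, $\overline{A^{\Box}}$ is primitive, so by Perron--Frobenius its left and right Perron eigenvectors are strictly positive, whence $(\overline r)_1,(\overline\ell)_1>0$.

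I expect the only genuine work to be the first step: keeping the group-ring weights multiplied in the correct order along paths and confirming that the ordered first-return decomposition (together with the centrality of $t$) reproduces $a^s$ rather than some reversed or opposite-ring product. Once $\alpha_k=\big((A^{\Box})^k\big)_{11}$ is in hand, the remainder is a direct reading-off of one entry in the $G$-Perron limit together with strict positivity of the Perron eigenvectors.
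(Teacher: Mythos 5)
Your proposal is correct and follows essentially the same route as the paper: the paper likewise realizes $A^{\Box}$ as the adjacency matrix of a loop graph with base vertex $1$, uses the first-return decomposition (lengths add, weights multiply, with $t$ central) to get $\alpha_k=\big((A^{\Box})^k\big)(1,1)$, and then concludes by Theorem \ref{zgperron}. Your explicit extraction of $c=(\overline r)_1(\overline\ell)_1/m$ from part (2)(c) of that theorem is just a spelled-out version of the paper's final appeal.
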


\begin{proof}
The matrix $A^{\Box}$ is the adjacency matrix of a loop
graph $\mathcal G$ with base vertex 1. Let $a = \sum_{k,g}a_{kg}
gt^k$, with the $a_{kg}$ in $\Z_+$.
Then in $\mathcal G$, for every positive coefficient $a_{kg}$,
there are $a_{kg}$ first return loops to 1 of length $k$ and weight
$g$. The  return loops to $1$ are formed from all concatenations
of first return loops. Under concatenation, lengths add and
weights multiply. Consequently, for all $k$, $(A^{\Box})^k(1,1)
= \alpha_k$. The proposition is then a consequence of
Theorem \ref{zgperron}.
\end{proof}


In the rest of this section, we check that two
standard results for SFTs carry over to $G$-SFTs.
The main interest of the next proposition is
$(1)\iff (2)$. The proof is an adaptation of
the proof  of Kim and Roush in the $\Z$
case (see
\cite[Section7.5]{LindMarcus1995} or \cite{S33}).

\begin{proposition}\label{seetc}
Suppose $G$ is a finite group, $\sss = \Z_+G$
or  $\sss = \Z G$,
and $A$ and $B$ are square matrices over $\sss$.
Then the following are equivalent.
\begin{enumerate}
\item
$A$ and $B$ are SE over $\sss$.
\item
$A^n$ and $B^n$ are ESSE over $\sss$ for all large $n$.
\item
$A^n$ and $B^n$ are SE over $\sss$ for all large $n$.
\item
Let $A$ be $n_1\times n_1$ and let $B$ be $n_2\times n_2$.
Let $n=\max \{n_1,n_2\}$ and let $m =|G|$.
Then there exists $k$ such that $A^k$, $B^k$ are SE over
$\sss$ and $k\equiv 1\ \textnormal{mod } ((mn)^2)!\ .$
\end{enumerate}
\end{proposition}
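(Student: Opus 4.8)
The plan is to establish the cycle $(1)\Rightarrow(2)\Rightarrow(3)\Rightarrow(4)\Rightarrow(1)$, of which only the final implication is substantial. Throughout, $\sss$ denotes $\Z_+G$ or $\ZG$; in the forward implications the required nonnegativity is automatic, since products of matrices over $\Z_+G$ remain over $\Z_+G$.

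For $(1)\Rightarrow(2)$ I would argue directly from the shift equivalence equations. Suppose $A$ and $B$ are SE over $\sss$ with lag $\ell$, witnessed by $R,S$ over $\sss$ satisfying $A^{\ell}=RS$, $B^{\ell}=SR$, $AR=RB$ and $SA=BS$. For any $n\ge\ell$ put $X=A^{n-\ell}R$ and $Y=S$. Then $XY=A^{n-\ell}RS=A^{n}$, while iterating $SA=BS$ yields $SA^{n-\ell}=B^{n-\ell}S$ and hence $YX=SA^{n-\ell}R=B^{n-\ell}SR=B^{n}$. Thus $A^{n}$ and $B^{n}$ are related by a single elementary strong shift equivalence over $\sss$ for every $n\ge\ell$, which is exactly $(2)$; note $X\ge 0$ when $A,R\ge 0$. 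The step $(2)\Rightarrow(3)$ is immediate, an ESSE being an SE of lag $1$, and $(3)\Rightarrow(4)$ follows by choosing $k$ of the form $1+j\cdot((mn)^2)!$ large enough that $(3)$ applies to $A^{k}$ and $B^{k}$; infinitely many such $k$ exist.

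The heart of the proof is $(4)\Rightarrow(1)$, where I would adapt the Kim--Roush argument. By the module characterization of shift equivalence (Proposition \ref{3waystose}), SE over $\sss$ is encoded by the $\sss[t]$-module $\cok(I-tA)$, equivalently by the eventual range module together with the automorphism induced by the matrix. The hypothesis that $A^{k}$ and $B^{k}$ are SE provides an isomorphism of the corresponding invariants intertwining the $k$-th powers of these two automorphisms, and the task is to upgrade it to an isomorphism intertwining the automorphisms themselves, which is SE of $A$ and $B$. The point of the congruence is that every obstruction to this upgrade is of finite order: passing to the regular representation $A\mapsto\widetilde A$, $B\mapsto\widetilde B$ (Section \ref{zgprimse}), which have size at most $mn$, the only way nonzero eigenvalues $\lambda$ of $\widetilde A$ and $\mu$ of $\widetilde B$ can satisfy $\lambda^{k}=\mu^{k}$ without being equal is for $\lambda/\mu$ to be a root of unity of order bounded in terms of $mn$, and likewise the torsion automorphisms of the modules are controlled by matrices of size at most $mn$ and so have bounded order. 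The modulus $((mn)^2)!$ is chosen to be divisible by all these finite orders, so that $k\equiv 1\pmod{((mn)^2)!}$ forces $\gcd(k,((mn)^2)!)=1$ and trivializes each of them: $\lambda/\mu=1$, and the torsion parts of the $k$-th powers agree with those of the original automorphisms. Hence the intertwining of the $k$-th powers descends to an intertwining of the automorphisms, giving SE of $A$ and $B$ over $\ZG$.

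I expect $(4)\Rightarrow(1)$ to be the main obstacle, in two respects. First, the finite-order descent must be made precise: one has to separate cleanly the infinite-order part of the eventual action, where the ambiguity is exactly a root of unity among eigenvalue ratios, from the genuinely finite torsion part, and verify in each case that divisibility of the relevant order by $((mn)^2)!$ makes the congruence $k\equiv 1$ remove the ambiguity. Second, for $\sss=\Z_+G$ one must carry the entire descent out respecting the positive structure, so that the resulting intertwiners again lie over $\Z_+G$ rather than merely over $\ZG$; for $G$-primitive data this is aided by Proposition \ref{primitiveeventual} and the power-multiplication device of Lemma \ref{forcedse}, but in general the order-theoretic bookkeeping, rather than the algebra of the descent, is the delicate part of the Kim--Roush adaptation.
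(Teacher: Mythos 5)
Your chain $(1)\Rightarrow(2)\Rightarrow(3)\Rightarrow(4)$ is correct (it is what the paper dismisses as clear), and for $(4)\Rightarrow(1)$ you have identified the same core mechanism the paper uses: pass to the regular representation $\widetilde A,\widetilde B$ of size at most $mn$, note that the relevant eigenvalue ratios are roots of unity whose orders divide $((mn)^2)!$, and use $k\equiv 1\pmod{((mn)^2)!}$ (your gcd remark is a valid way to phrase this) to force those ratios to be $1$.

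The genuine gap is in how you convert that spectral information into a shift equivalence of $A$ and $B$ \emph{over} $\sss$, and your own closing paragraph points at it. You propose to work with an abstract isomorphism of the modules $\cok(I-tA)$, $\cok(I-tB)$ intertwining $k$-th powers and then ``descend'' it. Any intertwiner produced that way lives a priori only over $\ZG$, which is exactly why you are forced to treat positivity over $\Z_+G$ as a separate delicate problem; and the tools you invoke for it do not apply, since Proposition \ref{primitiveeventual} requires $G$-primitivity (not assumed in Proposition \ref{seetc}) and Lemma \ref{forcedse} concerns matrices with powers supported on $u\Z$. The paper's proof makes this issue evaporate by never constructing new intertwiners at all: starting from the witnesses $U,V$ over $\sss$ of the lag-$\ell$ shift equivalence of $A^k,B^k$ supplied by (4), it sets $U_i=A^iU$ and $V_j=B^jV$ with $i,j\geq mn$ chosen so that $\ell+i+j\equiv 1\pmod{((mn)^2)!}$. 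These matrices are manifestly over $\sss$, they satisfy $A^{r}=U_iV_j$ and $B^{r}=V_jU_i$ with $r=k(\ell+i+j)\equiv 1\pmod{((mn)^2)!}$, and the only remaining task is to prove the identities $AU_i=U_iB$ and $BV_j=V_jA$. Since these are identities over $\ZG$, they can be verified after applying the faithful embedding $M\mapsto\widetilde M$. Here your sketch is also incomplete on the linear algebra: the root-of-unity argument only governs the eventual range, and one must separately handle the generalized kernel of $\widetilde A$; the paper does this precisely through the choice $i\geq mn$, which makes both sides of the intertwining identity vanish on the kernel. (Your ``torsion automorphism'' discussion is a red herring: the underlying abelian groups of these limit modules are torsion-free, so the only two phenomena are the nilpotent part, killed by multiplying by $A^i$, and the root-of-unity eigenvalue ratios.) Once the kernel and the eigenvalue argument are combined, $\widetilde A\widetilde{U_i}=\widetilde{U_i}\widetilde B$ follows, hence $AU_i=U_iB$, and the pair $(U_i,V_j)$ is itself the required shift equivalence over $\sss$ --- positivity never enters the argument.
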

\begin{proof}
Clearly $(1)\implies (2) \implies (3)\implies (4)$.
Now, to show
$(4)\implies (1)$, assume $(4)$.
Then we have $\ell \in \mathbb N$,
$k\equiv 1\ \textnormal{mod } ((mn)^2)!$
 and matrices $U,V$ over $\sss$
such that the following hold:
\[
(A^k)^{\ell} = UV\ ,    \quad (B^k)^{\ell} =VU \ , \quad
A^kU = U B^k\ , \quad B^kV = VA^k\ .
\]
For $i\geq n$ and $k\geq n$ define $U_i= A^iU$ and $V_j=B^jV$.
Then
\[
(A^k)^{\ell +i+j} = U_iV_j\ ,    \quad (B^k)^{\ell +i+j} =V_jU_i \ , \quad
A^kU_i = U_i B^k\ , \quad B^kV_j = V_jA^k\ .
\]
Via the map $\ZG \to \Z^m$ discussed earlier, this gives
a shift equivalence of matrices over $\sss$,
\[
(\widetilde A^k)^{\ell +i+j} = \widetilde{U_i}\widetilde{V_j}\ ,
\quad (\widetilde B^k)^{\ell +i+j} =\widetilde{V_j}\widetilde{U_i} \ ,
\quad \widetilde A^k\widetilde{U_i} = \widetilde{U_i}\widetilde B^k\ ,
\quad \widetilde B^k \widetilde{V_j} = \widetilde{V_j}\widetilde A^k\ .
\]
Choose $i$ such that $\ell + i +j
\equiv 1\ \textnormal{mod } ((mn)^2)!$.
It suffices to show that  the two intertwining equations then
hold with $k$ replaced by 1 (as this translates to the equations
holding with the $\ \widetilde{ }\ $ decorations removed). Let $r=k(\ell +i+j)$.

Consider the intertwining equation for $U_i$. The matrix $A$ is
$mn_1 \times mn_1$, and  $\mathbb C^{mn_1}$ is the direct sum
of the kernel $\K_A$ and the image $W_A$ of $A^{mn}$. Because
$i\geq mn$, restricted to $\K$ we have
$\widetilde A\widetilde{U_i} = \widetilde{U_i}\widetilde B= 0$ .
Also, $U_i$ maps $W_A$ isomorphically to $W_B$, the image of
$B^{mn}$. An invariant Jordan subspace of $A$ for eigenvalue
$\alpha\neq 0$ is mapped by $U_i$ to
an invariant Jordan subspace of $B$ for eigenvalue
$\beta\neq 0$, such that
$\alpha/\beta$ is a root of unity $\xi$ such that
$ \xi^r =1$. Because $\xi$ is in the
number field generated by $\alpha$ and $\beta$,
$\xi$ is a $q$th root of unity with $q\leq (mn)^2$,
and therefore $q$ divides $((mn)^2)!\, $. Consequently
$\xi^r =\xi$ and $\xi =1$. It follows that
$\widetilde A\widetilde{U_i} = \widetilde{U_i}\widetilde B$ .
The same argument works for the other intertwining
equation.
\end{proof}

\begin{proposition}\label{eventualconjugacy}
Suppose $G$ is a finite group
and $A$ and $B$ are square matrices over $\Z_+G$.
Then the following are equivalent.
\begin{enumerate}
\item
The $G$-SFTs $\sigma_A,\sigma_G$ are eventually conjugate.
\item
The matrices $A$, $B$ are SE over $\Z_+G$.
\end{enumerate}
\end{proposition}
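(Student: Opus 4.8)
The plan is to identify the $n$-th power of a $G$-extension presented by a matrix with the extension presented by the $n$-th matrix power, and then to run the resulting powers through Propositions \ref{prop_parry} and \ref{seetc}. As recorded in the discussion before Proposition \ref{prop_parry}, the $n$-th power $(T_{\ell,A})^n$ sends $(x,h)$ to $(\sigma^n x,\, h\,\tau_A(x)\cdots\tau_A(\sigma^{n-1}x))$, and if $A^n(i,j)=\sum_g n_g g$ then there are exactly $n_g$ length-$n$ paths from $i$ to $j$ of weight $g$. So I would first check that the standard higher-power recoding of the base SFT---whose power graph has adjacency matrix $\overline{A}^{\,n}=\overline{A^n}$ over $\Z_+$ and $G$-labeling given by $A^n$ over $\Z_+G$---carries $(T_{\ell,A})^n$ to $T_{\ell,A^n}$. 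This recoding alters only the base coordinate and leaves the $G$-coordinate fixed, hence is $G$-equivariant; thus $(T_{\ell,A})^n$ and $T_{\ell,A^n}$ are $G$-conjugate, and the same for $B$.

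Granting this, the definition of eventual conjugacy unwinds cleanly. The $G$-SFTs $\sigma_A,\sigma_B$ are eventually conjugate precisely when $(T_{\ell,A})^n$ and $(T_{\ell,B})^n$ are $G$-conjugate for all large $n$, which by the previous step holds if and only if $T_{\ell,A^n}$ and $T_{\ell,B^n}$ are $G$-conjugate for all large $n$. Applying $(1)\iff(3)$ of Proposition \ref{prop_parry} to the pair $A^n,B^n$, this is equivalent to the statement that $A^n$ and $B^n$ are SSE over $\Z_+G$ for all large $n$.

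It then remains to tie ``SSE of all large powers'' to SE over $\Z_+G$, which I would do through Proposition \ref{seetc} with $\sss=\Z_+G$. For $(2)\Rightarrow(1)$: if $A,B$ are SE over $\Z_+G$, then by $(1)\Rightarrow(2)$ of Proposition \ref{seetc} the powers $A^n,B^n$ are ESSE over $\Z_+G$ for all large $n$, hence in particular SSE over $\Z_+G$, so $\sigma_A,\sigma_B$ are eventually conjugate. For $(1)\Rightarrow(2)$: eventual conjugacy gives $A^n,B^n$ SSE over $\Z_+G$ for all large $n$, and since SSE always implies SE (lags add under composition), $A^n,B^n$ are SE over $\Z_+G$ for all large $n$; then $(3)\Rightarrow(1)$ of Proposition \ref{seetc} yields that $A,B$ are SE over $\Z_+G$. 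The routine points are the trivial implication SSE $\Rightarrow$ SE and the power recoding itself. The main obstacle is establishing that recoding $G$-equivariantly---that is, verifying that $A^n$ genuinely presents the $n$-th power extension---since once that correspondence is in place, Propositions \ref{prop_parry} and \ref{seetc} (whose conditions sandwich ``SSE of all large powers'' between ESSE of all large powers and SE of all large powers) close the argument.
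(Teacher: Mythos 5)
Your proof is correct and follows essentially the same route as the paper's: the paper also disposes of $(2)\implies(1)$ by noting that SE over $\Z_+G$ yields ESSE (hence SSE, hence conjugacy) of all large powers, and proves $(1)\implies(2)$ by converting eventual conjugacy into SE of the powers $A^n,B^n$ over $\Z_+G$ and then invoking Proposition \ref{seetc}. The only difference is that you make explicit the $G$-equivariant higher-power recoding $(T_{\ell,A})^n \cong T_{\ell,A^n}$ and the appeal to Proposition \ref{prop_parry}, steps the paper's two-line proof leaves implicit (that proof even appears to have the labels $(1)$ and $(2)$ inadvertently swapped in its second sentence).
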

\begin{proof}
Clearly $(2)\implies (1)$. Also, $(2)$ implies $A^n$ and $B^n$
are SE over $\Z_+G$ for all large $n$, and this implies $(1)$
by Proposition \ref{seetc}.
\end{proof}

\begin{proposition}\label{primitiveeventual}
Suppose $A,B$ are \gps . Then
the following are equivalent.
\begin{enumerate}
\item
$A$ and $B$ are SE over $\Z_+G$.
\item
$A$ and $B$ are SE over $\ZG$.
\end{enumerate}
\end{proposition}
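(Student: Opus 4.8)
The plan is to prove the nontrivial implication $(2)\Rightarrow(1)$, since $(1)\Rightarrow(2)$ is immediate: any shift equivalence $(R,S)$ over $\Z_+G$ has entries in $\Z_+G\subseteq\ZG$ and is already a shift equivalence over $\ZG$. For the converse, fix a shift equivalence over $\ZG$ of lag $\ell$, given by matrices $R,S$ over $\ZG$ with
\[
A^\ell=RS,\quad B^\ell=SR,\quad AR=RB,\quad SA=BS.
\]
Because $A,B$ are \gpscomma\ their augmentations $\overline A,\overline B$ are primitive, and applying the ring-homomorphism functor $\widetilde{\cdot}$ shows $\widetilde A,\widetilde B$ are SE over $\Z$, so they share a common spectral radius $\lambda:=\lambda_A=\lambda_B>1$. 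The strategy is to replace $(R,S)$ by $(A^mR,\,SA^m)$ for large $m$ and show these become $G$-positive, using the $G$-Perron convergence of Theorem \ref{zgperron}.

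First I would record the new shift equivalence identities. Writing $R'=A^mR=RB^m$ and $S'=SA^m=B^mS$ (the second equalities following from the intertwining relations by induction), a direct computation gives $R'S'=A^{\ell+2m}$, $S'R'=B^{\ell+2m}$, $AR'=R'B$ and $S'A=BS'$, so $(R',S')$ is a shift equivalence over $\ZG$ of lag $\ell+2m$. It therefore suffices to arrange $R'\gg 0$ and $S'\gg 0$. By Theorem \ref{zgperron}, $\lambda^{-m}A^m\to P_A:=u\,\tfrac1m\,\overline r_A\overline{\ell}_A$, with $u=g_1+\cdots+g_m$ and $\overline r_A,\overline{\ell}_A$ strictly positive Perron eigenvectors of $\overline A$; hence $\lambda^{-m}R'\to P_AR$ and $\lambda^{-m}S'\to SP_A$ entrywise in $\R G$. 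Using the centrality of $u$ and $ux=\overline x\,u$ for $x\in\R G$, these limits simplify to $P_AR=\tfrac1m\,u\,\overline r_A(\overline{\ell}_A\overline R)$ and $SP_A=\tfrac1m\,u\,(\overline S\overline r_A)\overline{\ell}_A$, where the bracketed factors are now real vectors.

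The crux is a sign analysis of $\overline{\ell}_A\overline R$ and $\overline S\overline r_A$. Since $\overline A\,\overline R=\overline R\,\overline B$ and $\overline B$ is primitive, $\overline{\ell}_A\overline R$ is a left $\lambda$-eigenvector of $\overline B$, so $\overline{\ell}_A\overline R=c\,\overline{\ell}_B$; the relation $\overline{\ell}_A\overline R\,\overline S=\lambda^\ell\overline{\ell}_A\gg 0$ forces $c\neq 0$. Replacing $(R,S)$ by $(-R,-S)$ if necessary — which preserves all four shift equivalence equations — I may assume $c>0$, whence $P_AR=\tfrac cm\,u\,\overline r_A\overline{\ell}_B\gg 0$ and $R'\gg 0$ for all large $m$. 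For $S'$ I would write $\overline S\overline r_A=d\,\overline r_B$ and $\overline{\ell}_B\overline S=c'\overline{\ell}_A$; pairing these against the positive Perron vectors and again using $\overline{\ell}_A\overline R\,\overline S=\lambda^\ell\overline{\ell}_A$ yields $cc'=\lambda^\ell>0$ and that $d$ shares the sign of $c'$, hence of $c$. Thus $d>0$, $SP_A\gg 0$, and $S'\gg 0$ for all large $m$. Choosing such an $m$ gives a shift equivalence $(R',S')$ over $\Z_+G$, proving $(1)$.

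The step I expect to be the main obstacle is exactly this sign bookkeeping: the transported eigenvectors $\overline{\ell}_A\overline R$ and $\overline S\overline r_A$ are each determined only up to a real scalar, and the argument must show that a \emph{single} global normalization — the substitution $(R,S)\mapsto(-R,-S)$ — simultaneously makes both $R'$ and $S'$ positive. The coupling identity $cc'=\lambda^\ell>0$ between the two constants is precisely what makes this possible.
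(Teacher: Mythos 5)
Your proof is correct and is essentially the paper's own argument: both proofs replace the intertwining pair by $(A^mR,\,B^mS)$ after a single global sign change $(R,S)\mapsto(-R,-S)$ and invoke Perron--Frobenius positivity of high powers to land in $\Z_+G$. The only difference is one of execution --- the paper passes to the regular representation $\widetilde A,\widetilde B$ and gets positivity for the integer matrices $\widetilde A^k\widetilde U,\widetilde B^k\widetilde V$, whereas you stay inside $\ZG$ via the convergence statement of Theorem \ref{zgperron} and spell out, through the coupling $cc'=\lambda^\ell$, the simultaneous sign bookkeeping that the paper compresses into ``perhaps after replacing $U,V$ with $-U,-V$.''
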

\begin{proof}
Assuming (2), it suffices to prove (1).
We have matrices $U,V$ over $\ZG$ giving
the assumed shift equivalence of $A,B$.
Then $\widetilde U, \widetilde V$ give
a shift equivalence of
$\widetilde A, \widetilde B$.
Perhaps after
replacing $U,V$ with $-U,-V$ we
have that $U$ takes positive left/right
eigenvectors of $\widetilde A$ to positive
left/right eigenvectors for $\widetilde B$,
and likewise for $V$.  It follows
from the spectral gap given by
primitivity that for large $k$,
the matrices
$\widetilde A^kU$ and $\widetilde B^kV$ are strictly positive.
They give an SE over $\Z_+$ of
$\widetilde A,\widetilde B$ and consequently produce
an SE over $\Z_+G$ of
$A,B$.
\end{proof}

\section{$\nkone ( \ZG )$}\label{sec:nk1}
 Let $\Rcal$ be a  ring (always assumed to be unital).
In this appendix, we give background on the group $\nkone (\rr )$,
especially for $\rr = \ZG $, with $G$ a finite group.

The first algebraic $\K$ group is defined by $\text K_{1}(\Rcal) =
\GL(\Rcal)/\EL(\Rcal)$, where $\GL(\Rcal) = \varinjlim \GL_{n}(\Rcal)$ and
$\EL(\Rcal) = \varinjlim \EL_{n}(\Rcal)$, $\EL_{n}(\Rcal)$ the elementary
matrices of size $n$.
If $\Rcal$ is also commutative, then the determinant map $\det:\Rcal
\to \Rcal^{\times}$ is a split surjection, and gives a decomposition
$\K_{1}(\Rcal) \cong \SK_{1}(\Rcal) \oplus \Rcal^{\times}$, where
$\SK_{1}(\Rcal)=\ker (\det )$, and $\Rcal^{\times}$ denotes the group of units in $\Rcal$.\\
\indent The group $\nkone(\Rcal)$ is defined to be $\ker(\K_{1}(\Rcal[t]) \stackrel{t \to 0}\to\ \text{K}_{1}(\Rcal))$. The exact sequence $0 \to t\Rcal[t] \to \Rcal[t] \stackrel{t \to 0}\to \Rcal \to 0$ is split on the right, giving a decomposition $\K_{1}(\Rcal[t]) \cong \nkone(\Rcal) \oplus \K_{1}(\Rcal)$.  Higman's trick shows that $\nkone(\Rcal)$ is generated by elements of the form $[I-tN]$, with $N$ nilpotent. If $R$ is reduced (has no non-trivial nilpotents), then one also has $\nkone(\Rcal) \subset \SK_{1}(\Rcal[t])$.


For any ring $\rr$,  $\nkone(\Rcal) $ either is trivial or is not
finitely generated
as a group.
For many rings $\rr$, $\nkone(\rr )=0$.
For any regular Noetherian ring $\rr$,
$\nkone(\rr )=0$.
For example, a polynomial ring $\rr [x_1, \dots , x_n]$ is regular
Noetherian
if
$\rr$ is a field, $\Z$, a Dedekind domain or any ring with finite
global dimension.
See \cite{Rosenberg1994,WeibelBook} for all this and
more.
However,  if $G$ is a non-trivial finite group,
then $\mathbb{Z}G$ is not regular, and in general the computation of
$\nkone (\mathbb{Z}G)$ is difficult.
If $G$ is any finite group of square-free order, then
$\nkone(\mathbb{Z} G) = 0$ \cite{Harmon1987}.
In \cite{Weibel2009}, it is shown that $\nkone(\mathbb{Z}[\mathbb{Z}/2
\oplus \mathbb{Z}/2])$, $\nkone(\mathbb{Z}[\mathbb{Z}/4])$, and
$\nkone(\mathbb{Z}[D_{4}])$, where $D_{4}$ denotes the dihedral group
of order 8, are all non-zero. In fact, both
$\nkone(\mathbb{Z}[\mathbb{Z}/2 \oplus \mathbb{Z}/2])$ and
$\nkone(\mathbb{Z}[\mathbb{Z}/4])$, as abelian groups, are isomorphic
to a countably infinite direct sum of copies of $\mathbb{Z}/2$, while
$\nkone(\mathbb{Z}[D_{4}])$ is a quotient of a direct sum of a
countably infinite free $\mathbb{Z}/4$ module and a countably infinite
free $\mathbb{Z}/2$ module \cite{Weibel2009}.\\
\indent While the situation for $\mathbb{Z}[G]$ with $G$ a general finite group is complicated, more is known for finite abelian groups. It follows from Theorem 3.12 in \cite{MartinThesis} together with Theorem 1.4 from \cite{Weibel2009} that $NK_{1}(\mathbb{Z}[\mathbb{Z}/p^{n}]) \ne 0$ for $n \ge 2$ with $p$ prime\footnote{This is also proved in \cite{SchmiedingNK1FinAbGrps}}. This taken together with Theorem 3.6 in \cite{MartinThesis} then implies that for a general finite abelian group $G = \bigoplus_{i=1}^{n}\mathbb{Z}/p_{i}^{k_{i}}$, $NK_{1}(\mathbb{Z}[G])$ is non-zero if one of it's $p$-primary cyclic components has $p$-rank greater than 1, i.e. $k_{i} \ge 2$ for some $1 \le i \le n$.

For any ring $\rr$ and finite group $G$,
$\nkone(\Rcal G$) is a torsion group
\cite{HambletonLueck2007,WeibelBook}.
In fact, \cite[Theorem A]{HambletonLueck2007} shows that the order of
every element of $\nkone(\Rcal G)$ is some power of $|G|$, whenever
$\nkone(\Rcal) = 0$.
(For $\Rcal = \mathbb{Z}$, and other rings, this is a result of
 Weibel.)
In particular, if $P$ is a finite $p$-group, then every element of $\nkone(\mathbb{Z}P)$ has $p$-primary order \cite{HambletonLueck2007}.

\begin{proposition} \label{sk1fact}
Suppose  the ring $\rr$ is  commutative and reduced (i.e.,
has no nonzero nilpotent
element).
Then the following hold.
\begin{enumerate}
\item
Let $N$ be a nilpotent matrix over $\rr$. Then $\trace  (N^k)=0$
for all $k$ in $\N$.
\item
  $\nkone(\Rcal) \subset \SK_{1}(\Rcal[t])$.
\end{enumerate}
If  $G$ is a finitely generated abelian group, then
$\nkone(\mathbb{Z}G) \subset \SK_{1}(\mathbb{Z}G[t])$.
\begin{proof}
(1)
  Suppose $N$ is nilpotent
  with $\trace (N^{\ell}) = \alpha \neq 0$.
  Without loss of generality,
suppose $\trace (N^j) = 0$ for $j>{\ell}$.
Set $M=N^{\ell}$ and suppose $M^J=0$.
 Let $\det (I-tM) = 1 -c_1t -c_2t^2 - \cdots $.
Then $c_1 = \alpha $ and for $k>1$,
\[
\trace (M^k)= kc_k + \sum_{1\leq j < k} c_j
\trace (M^{k-j})
= kc_k + c_{k-1}\trace (M) \ .
\]
By induction, $(k!)c_k = (-1)^{k+1} \alpha^k$,  for all $k$ in
$\N$. Since $\det(I-tM)$ is a polynomial,
$\alpha$ is nilpotent, a contradiction.

(2)
An element of  $\nkone(\Rcal)$
contains a matrix of the form $I-tN$, where $N$ is nilpotent over
$\rr$.
Since $I-tN$ is invertible,
$\det(I-tN)$ must be a unit in the polynomial ring $\Rcal[t]$.
Because $\Rcal$ is commutative and reduced,
 the only units in $\Rcal[t]$ are
degree zero polynomials, 
and therefore  $\det(I-tN) = 1$.



For a finitely generated abelian group $G$,
it follows from a
theorem of Sehgal \cite[page 176]{Sehgal} that $\mathbb{Z}G$ has no
nilpotent elements.
\end{proof}
\end{proposition}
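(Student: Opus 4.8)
The plan is to treat the two numbered claims separately and then read off the $\ZG$ statement, and I expect the genuinely delicate point to be (1). A direct attack on (1) via Newton's identities would force me to track the coefficients of the characteristic polynomial of $N$ and argue that a certain $\alpha$ with $\alpha^{k}=0$ must vanish; this is workable but fiddly over a general commutative ring. Instead I would sidestep that bookkeeping by reducing (1) to the classical case of an algebraically closed field, where nilpotence of a matrix is equivalent to all eigenvalues being zero.

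For (1), I would use that a commutative ring $\rr$ is reduced exactly when its nilradical is trivial, i.e. when $\bigcap_{\mathfrak p}\mathfrak p=\{0\}$ as $\mathfrak p$ ranges over the prime ideals of $\rr$. Fix a nilpotent $N$ with $N^{J}=0$ and fix $k\in\N$. For each prime $\mathfrak p$, reduction modulo $\mathfrak p$ is a ring homomorphism carrying $N$ to a matrix $\overline N$ over the integral domain $\rr/\mathfrak p$ with $\overline N^{\,J}=0$, and since trace commutes with ring homomorphisms we have $\trace(N^{k})\bmod\mathfrak p=\trace(\overline N^{\,k})$. Embedding $\rr/\mathfrak p$ into an algebraic closure $\overline K$ of its fraction field makes $\overline N$ a nilpotent matrix over a field, so all its eigenvalues $\lambda_{i}$ vanish and $\trace(\overline N^{\,k})=\sum_{i}\lambda_{i}^{k}=0$. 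Hence $\trace(N^{k})\in\mathfrak p$ for every prime $\mathfrak p$, so $\trace(N^{k})$ lies in the nilradical, which is $\{0\}$. This is the step carrying the main weight, but the reduction renders it routine; the only care needed is that nilpotence and the trace both behave well under the quotient maps.

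For (2), I would invoke the structure recorded in the appendix: for commutative $\rr[t]$ the determinant induces a homomorphism $\det\colon\K_1(\rr[t])\to\rr[t]^{\times}$ with kernel $\SK_1(\rr[t])$, while $\nkone(\rr)$ is the kernel of $\K_1(\rr[t])\to\K_1(\rr)$ given by $t\mapsto 0$. Take $x\in\nkone(\rr)$. Because $\rr$ is commutative and reduced, a polynomial is a unit in $\rr[t]$ iff its constant term is a unit and its higher coefficients are nilpotent, so $\rr[t]^{\times}=\rr^{\times}$; thus $\det(x)$ is a constant $c\in\rr^{\times}$. Applying $t\mapsto 0$ and using compatibility of $\det$ with this ring map gives $c=\det(x)|_{t=0}=\det\big(x|_{t=0}\big)=\det(1)=1$, so $\det(x)=1$ and $x\in\SK_1(\rr[t])$. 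Alternatively, Higman's trick lets one write generators of $\nkone(\rr)$ as $[I-tN]$ with $N$ nilpotent, and then part (1) forces $\det(I-tN)=1$ directly.

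Finally, for $G$ finitely generated abelian, $\ZG$ is commutative, and by Sehgal's theorem \cite[p.~176]{Sehgal} it has no nonzero nilpotent elements, hence is reduced; the inclusion $\nkone(\ZG)\subset\SK_1(\ZG[t])$ then follows at once from (2).
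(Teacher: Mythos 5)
Your proof is correct, but part (1) takes a genuinely different route from the paper's. The paper argues purely equationally with a Newton-identity computation: assuming $\trace(N^{\ell})=\alpha\neq 0$ with all higher traces zero, it sets $M=N^{\ell}$, writes $\det(I-tM)=1-c_1t-c_2t^2-\cdots$, uses the trace recursion $\trace(M^k)=kc_k+c_{k-1}\trace(M)$ to show by induction that $(k!)\,c_k=(-1)^{k+1}\alpha^k$, and then, since $\det(I-tM)$ is a polynomial, concludes that $\alpha$ is nilpotent, contradicting reducedness. You instead reduce modulo each prime ideal $\mathfrak{p}$, pass to an algebraically closed field where nilpotent matrices visibly have vanishing power traces, and conclude that $\trace(N^k)$ lies in the nilradical, which is zero. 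Your argument buys conceptual transparency and avoids the coefficient bookkeeping (note that the paper must route through ``$\alpha$ is nilpotent'' precisely because one cannot divide by $k!$ in a general ring), at the cost of invoking commutative-algebra infrastructure (nilradical $=$ intersection of primes, fraction fields, algebraic closure) that the paper's self-contained identity manipulation does not need. For part (2) and the $\ZG$ statement you follow essentially the paper's route: both arguments rest on the fact that the units of $\rr[t]$ are the constants $\rr^{\times}$ when $\rr$ is commutative and reduced, together with evaluation at $t=0$ to pin the determinant to $1$, and both quote Sehgal for $\ZG$ being reduced; your version is phrased functorially for an arbitrary class in $\nkone(\rr)$, while the paper works with the Higman representative $I-tN$ and its invertibility, a cosmetic difference.

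One caution on your closing aside in (2): the claim that part (1) ``forces $\det(I-tN)=1$ directly'' is not a valid deduction. Vanishing of all power traces does not by itself imply $\det(I-tA)=1$ over a general commutative ring: Newton's identities only yield $kc_k=0$, which does not kill $c_k$ in the presence of torsion (over $\mathbb{F}_2$, which is reduced, the $2\times 2$ identity matrix has all power traces zero yet $\det(I-tI_2)=(1-t)^2\neq 1$). For nilpotent $N$ the conclusion $\det(I-tN)=1$ is true, but it requires either your prime-reduction argument again or the invertibility-plus-units argument. Since this was offered only as an alternative remark and your main argument for (2) is complete, this does not affect the correctness of the proposal.
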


For a ring $\rr$, the reduced nil group $\text{Nil}_0(\rr )$ is an
abelian group which may be presented by generators and relations
as follows. The generator set is the set of nilpotent matrices. The
relations are $A = A \oplus 0$ (where $0$ is any square zero matrix
and $A$ is nilpotent); $A =U^{-1}AU$ ($A $ nilpotent, $U$ invertible
over $\rr$);
and for any block
matrix with $A,B$ square nilpotent,
\[
A + B = \begin{pmatrix} A & X \\ 0 & B \end{pmatrix} \ .
\]
An important corresondence in $\K$-theory
is that the map $N\mapsto I+tN$ (defined for $N$ nilpotent)
induces a well defined isomorphism from
$\text{Nil}_0(\rr)$ to $\nkone (\rr )  $.\\

\textbf{Explicit examples over $\mathbb{Z}G$}

Below we give some explicit examples of elements in
$\nkone$ of certain integral group rings.

\begin{example} \label{nk1example}
We give a $2\times 2$ matrix $M$ which represents a
nontrivial element of $\nkone(\ZG)$, for the cyclic group $G=\Z /4\Z$.
(The justification in \cite{SchmiedingExamples}
for the example is a nontrivial and
computer-assisted exercise in $\K$-theory.)
We let $\sigma$ be a generator of $G$ and set
$$M=
\begin{pmatrix} 1-a & -b \\
-c & 1-d  \end{pmatrix}$$
with
\begin{align*} a&= (1-\sigma^{2})(x-2x^2+2x^3-\sigma + x\sigma +x^{2}
  \sigma) \\
b&=(1-\sigma^{2})(1+2x-x^{2}-x^{3}-2x^{4}+\sigma-x\sigma -2x^{2}\sigma-3x^{3}\sigma+2x^{4}\sigma) \\
c&=(1-\sigma^{2})(-1+2x-5x^{2}+7x^{3}-3x^{4}+2x^{5}-\sigma +2x\sigma -2x^{3}\sigma +3x^{4}\sigma -2x^{5}\sigma) \\
d&= (1-\sigma^{2})(2+x-2x^{2}-4x^{4}-2x^{5}+\sigma -3x\sigma
-x^{2}\sigma -4x^{3}\sigma +6x^{4}\sigma -4x^{5}\sigma +4x^{6}\sigma)
\ .
\end{align*}
Because entries of the $2\times 2$ matrix $M$ have maximum degree
6, we can systematialy produce from $M$ a $12\times 12$ nilpotent matrix $N$
which is nontrivial in  $\text{Nil}_0(\ZG )$. We could work harder to
reduce the $12\times 12$ size a bit,
but we do not know how to produce a small nilpotent matrix
nontrivial in
 $\text{Nil}_0(\ZG )$.
\end{example}

\begin{example}
One could ask for an explicit example of two
\gps  matrices over $\Z_+ G$
with $G$ abelian which
are shift equivalent but not strong shift equivalent over $\Z_+G$
(and thus present nonisomorphic mixing group extensions).
We don't know small matrix examples for this, because we
don't know small examples of nilpotents nontrivial in
$\nkone(\ZG)$. We can do a
bit better with polynomial matrix presentations.
With $G=\Z/4\Z$ and $a,b,c,d$ from Example \ref{nk1example}
and $e,f$ elements of $\Z_+G[x]$,
consider the $4\times 4$ matrix
\begin{align*}
&\begin{pmatrix}
1 & 0 & 0 & 0 \\
0 & 1 & 0 &  0 \\
1 & 0 & 1 & 0 \\
1 & 0 & 0 & 1
\end{pmatrix}
\begin{pmatrix}
1 & 0 & 0 & 0 \\
0 & 1 & -1 & -1 \\
0 & 0 & 1 & 0 \\
0 & 0 & 0 & 1
\end{pmatrix}
\begin{pmatrix}
e & f & 0 & 0 \\
e & f & 0 & 0 \\
0 & 0 & a & b \\
0 & 0 & c & d
\end{pmatrix}
\begin{pmatrix}
1 & 0 & 0 & 0 \\
0 & 1 & 1 &  1 \\
0 & 0 & 1 & 0 \\
0 & 0 & 0 & 1
\end{pmatrix}
\begin{pmatrix}
1 & 0 & 0 & 0 \\
0 & 1 & 0 &  0 \\
-1 & 0 & 1 & 0 \\
-1 & 0 & 0 & 1
\end{pmatrix}
\\
& =
\begin{pmatrix}
e-2f  & f & f & f \\
e-2f +(a+b+c+d) & f & f-(a+c) & f-(b+d) \\
e - (a+b) & 0 & f-c & f-d \\
e - (c+d) & 0 & f-a & f-b
\end{pmatrix}
:= L \ .
\end{align*}
\end{example}

Let $K = \left( \begin{smallmatrix} e&f \\ e&f \end{smallmatrix}
\right)$.
Choosing $f$, and then $e$, with sufficiently large coefficients,
one has $K$ and $ L$  over  $\Z_+G[t]$
such that $K^{\Box}$ and $L^{\Box}$ are
\gps  matrices.
Because
$I-K$ and $I-L$  are not $\EL (\ZG[t])$ equivalent,
$K^{\Box}$ and $L^{\Box}$ are not
SSE over $\ZG$, and therefore the  associated group extensions
cannot be isomorphic. However,
$K^{\Box}$ and $L^{\Box}$ are shift equivalent over
$\ZG$ and therefore (since they are \gp )
shift equivalent over $\Z_+G$, by \ref{primitiveeventual}.
%

\bibliographystyle{plain}
\bibliography{mbssbib}

\section{Corrections} 
The content preceding this appendix is essentially
the content of the paper as accepted by Ergodic Theory and
Dynamical Systems  (doi:10.1017/etds.2015.87), 
 before processing by the publisher.
Two corrections should be made to the paper:
\begin{itemize}
  \item 
The bijection
of Theorem \ref{sseclassif}(2)
is not to $\text{NK}_1(\mathcal{R} )$, but to a certain quotient
group $\text{NK}_1(\mathcal{R} )/E(A, \mathcal{R})$.
\item
In Theorem \ref{zginfinite} there should be added the
  hypothesis that the elementary stabilizer $E(A,\mathcal{R})$
  is trivial. (This is know to hold if $A$ is invertible
  over $\mathcal R$, and in some other cases \cite{BoSc1}.)
\end{itemize}
The 
``elementary stabilizer'' $E(A, \mathcal{R})$ is defined to be 
the subgroup of elements $[U]$ in $\text{NK}_1(\mathcal{R} )$
such that there exists an elementary matrix $E$ such that
$U(I-tA) = (I-tA)E$. 

With these changes, 
  the theorems and proofs remain correct.
Note, the revised  Theorem  \ref{zginfinite}  
  still provides for every finite group $G$ with
nontrivial $NK_1(\Z G)$ many cases in
  which the answer to Parry's
  question is decisively no.

  We give now a little more context.
  
  Theorem \ref{sseclassif} 
states   a  result claimed in
the version 1 arXiv post of 
\cite{BoSc1}; this quoted result was corrected in
the version 2  post  of 
\cite{BoSc1} (and in the paper itself, to appear in Crelle's Journal),
where we proved for every commutative ring $\mathcal{R}$ that
\[
\bigcup_A E(A,\mathcal{R}) = NSK_1(\mathcal{R})  \ .
\]

\end{document}